\numberwithin{equation}{section}
\newtheorem{thm}{Theorem}[section]
\newtheorem*{thm*}{Theorem}
\newtheorem{cor}[thm]{Corollary}
\newtheorem*{cor*}{Corollary}
\newtheorem{lem}[thm]{Lemma}
\newtheorem*{lem*}{Lemma}
\newtheorem{prop}[thm]{Proposition}
\newtheorem*{prop*}{Proposition}
\newtheorem*{conjecture*}{Conjecture}
\newtheorem*{fact*}{Conjecture}
\newtheorem*{criterion*}{Criterion}
\newtheorem*{algorithm*}{Algorithm}
\newtheorem*{ax*}{Axiom}
\newtheorem*{assumption*}{Assumption}
\newtheorem*{question*}{Question}
\theoremstyle{remark}
\newtheorem{rem}[thm]{Remark}
\newtheorem*{rem*}{Remark}
\newtheorem{rems}[thm]{Remarks}
\newtheorem*{rems*}{Remarks}
\newtheorem*{claim*}{Claim}
\newtheorem*{exercise*}{Exercise}
\newtheorem*{note*}{Note}
\newtheorem{notation}[thm]{Notation}
\newtheorem*{notation*}{Notation}
\newtheorem*{summary*}{Summary}
\newtheorem*{acknowledgement*}{Acknowledgement}
\newtheorem*{conclusion*}{Conclusion}
\theoremstyle{definition}
\newtheorem{defn}[thm]{Definition}
\newtheorem*{defn*}{Definition}
\newtheorem{example}[thm]{Example}
\newtheorem*{example*}{Example}
\newtheorem{examples}[thm]{Examples}
\newtheorem*{examples*}{Examples}
\newtheorem*{problem*}{Problem}
\newtheorem*{xca*}{Exercise}
\newtheorem*{xcas*}{Exercises}
\newtheorem*{condition*}{Condition}
\author[Cluckers]
{Raf Cluckers}
\address{Univ.~Lille, CNRS, UMR 8524 - Laboratoire Paul Painlev\'e, F-59000 Lille, France, and,
KU Leuven, Department of Mathematics, B-3001 Leuven, Belgium}
\email{Raf.Cluckers@univ-lille.fr}
\urladdr{http://rcluckers.perso.math.cnrs.fr/}
\author[Comte]{Georges Comte}
\address{Universit\'e Savoie Mont Blanc, LAMA,
CNRS UMR 5127,
F-73000 Chamb\'ery, France}
\email{georges.comte@univ-smb.fr}
\urladdr{https://georgescomte.perso.math.cnrs.fr/}
\author[Rolin]{Jean-Philippe Rolin}
\address{Institut de mathématiques de Bourgogne, UMR 5584 CNRS, Universit\'e de Bourgogne,
F-21000 Dijon, France}
\email{jean-philippe.rolin@u-bourgogne.fr}
\urladdr{http://rolin.perso.math.cnrs.fr/}
\author[Servi]{Tamara Servi}
\address{Institut de Math\'ematiques de Jussieu -- Paris Rive Gauche \\
	Universit\'{e} Paris Cit\'{e} and Sorbonne Universit\'{e}, CNRS, IMJ-PRG, F-75013 Paris, France}
\email{tamara.servi@imj-prg.fr}
\urladdr{http://www.logique.jussieu.fr/~servi/index.html}
\begin{document}
\title{Mellin transforms of power-constructible functions}
\begin{abstract}
We consider several systems of algebras of real- and complex-valued
functions, which appear in o-minimal geometry and related geometrically
tame contexts. For each such system, we prove its stability under
parametric integration and we study the asymptotics of the functions
as well as the nature of their parametric Mellin transforms. 

\tableofcontents{}
\end{abstract}

\subjclass[2000]{26B15; 14P15; 32B20; 42B20; 42A38 (Primary) 03C64; 14P10; 33B10 (Secondary).}
\maketitle
\begin{acknowledgement*}
The IMB receives support from the EIPHI Graduate School (contract
ANR-17-EURE-0002). The third and fourth authors would like to thank
the Fields Institute for Research in Mathematical Sciences for its
hospitality and financial support, as part of this work was done while
at its Thematic Program on Tame Geometry and Applications in 2022.
The first author was partially supported by the Labex CEMPI (ANR-11-LABX-0007-01).
\end{acknowledgement*}

\section{Introduction\label{sec:Introduction}}

In this work we pursue the investigation, started in \cite{ccmrs:integration-oscillatory},
of certain parametric integral transforms from the point of view of
tame analysis (in \cite{ccmrs:integration-oscillatory} we studied
the parametric Fourier transform, here we consider the parametric
Mellin transform, and in a forthcoming paper we analyze the combined
action of these two operators on certain collections of tame functions).

The study of parametric integrals of functions belonging to a given
tame class arises from the question of the nature of the volume of
the fibres $X_{y}$ of a tame family $(X_{y})_{y\in Y}$. More precisely,
describing the locus of integrability is a counterpart to establishing
the nature of the set of points $y$ of $Y$ for which $X_{y}$ has
finite volume. The volumes of globally subanalytic sets have been
studied in \cite{lr:int,clr}, where it is proven that, for a globally
subanalytic set $X\subseteq\mathbb{R}^{n+m}$ such that the fibres
$X_{y}=X\cap\left\{ \left\{ y\right\} \times\mathbb{R}^{m}\right\} $
have dimension at most $k$, the set $Y_{0}\subseteq\mathbb{R}^{n}$
of points $y$ such that the $k$-dimensional volume $v\left(y\right)$
of $X_{y}$ is finite is again globally subanalytic. However, it is
necessary to introduce a function which is not globally subanalytic
in order to express the volume: the restriction of $v$ to $Y_{0}$
has the form $v=P\left(A_{1},\ldots,A_{r},\log A_{1},\ldots,\log A_{r}\right)$,
where $P$ is a polynomial and the $A_{i}$ are positive globally
subanalytic functions.

The class of all functions definable in an o-minimal structure is
closed under many natural operations, but is not in general stable
under parametric integration. For instance, it follows from the above
results that the family $\mathcal{S}$ of all \emph{globally subanalytic
functions }is not stable under parametric integration. However, the
family $\mathcal{C}$ of \emph{constructible functions} (see Definition
\ref{def: subanalytic and constructible}) is, and indeed it is the
smallest such collection containing $\mathcal{S}$ (see \cite{cluckers_miller:loci_integrability}).
Moreover, the locus of integrability of a constructible function is
the zero-set of a function which is again constructible.
 The expansion $\mathbb{R}_{\text{an,\ensuremath{\exp}}}$ of the real field by all restricted analytic functions and the unrestricted exponential
is an o-minimal structure in which all the functions in $\mathcal{C}$ are definable, which is not stable under parametric integration, as shown in \cite[theorem 5.11]{dmm:series}. For instance the error function $\displaystyle x\mapsto \int_{0}^{x}  \mathrm{e}^{-t^2}  \ \mathrm{d}t$ is the parametric integral of a very simple function definable in  $\mathbb{R}_{\text{an,\ensuremath{\exp}}}$, but it is not itself definable in $\mathbb{R}_{\text{an,\ensuremath{\exp}}}$.

Nevertheless, some of these integrals are definable in larger o-minimal structures.
For example, all antiderivatives of functions definable in an o-minimal
structure $\mathcal{R}$ are definable in a larger o-minimal structure,
called the Pfaffian closure of $\mathcal{R}$ \cite{speiss:clos}.
Other parametric integrals and  integral transforms of functions
definable in $\mathbb{R}_{\text{an,\ensuremath{\exp}}}$ (for example,
the restrictions to the real half-line $\left(1,+\infty\right)$ of
the Gamma function, seen as a Mellin transform, and of the Riemann
Zeta function, seen as a quotient of two Mellin transforms) are known
to be definable in suitable larger o-minimal structures \cite{vdd:speiss:multisum,vdd:speiss:gen,rolin_servi_speissegger:multisummability_generalized_series}.
However, there is no known general o-minimal universe in which all
such parametric integrals are definable (and indeed incompatibility
results in \cite{rsw,rss,legal:genericity} suggest that such a universe
might not exist).

We therefore turn our attention  
 to subcollections of functions definable
in a given o-minimal structure (here, 
 $\mathbb{R}_{\text{an,\ensuremath{\exp}}}$)
which are stable under taking parametric integrals, 
as is the family $\mathcal{C}$.
There aren't many
known such collections. For example, the collection of all functions
definable in $\mathbb{R}_{\text{an}}^{\mathrm{pow}}$ (the polynomially bounded expansion of 
$\mathbb{R}_{\text{an}}$
 by all real power functions, seen as
a reduct of $\mathbb{R}_{\text{an},\exp}$) is not stable under parametric
integration and indeed some such integrals are not even definable
in $\mathbb{R}_{\text{an},\exp}$ (see \cite[Prop. 2.1 and Theorem 2.2]{soufflet:asymptotic_expansions} and Subsection \ref{subsec:power-constructible} where this example is discussed in detail).
Our first aim is to define a collection $\mathcal{C}^{\mathbb{R}}$
of $\mathbb{R}$-algebras of functions definable in the o-minimal structure $\mathbb{R}_{\text{an},\exp}$,
extending the stable collection $\mathcal{C}$, and, in turn, stable under parametric integration (see
Definition \ref{def:C^K} and Theorem \ref{thm Stability of C^K}
below, for the case $\mathbb{K}=\mathbb{R}$). The elements of $\mathcal{C}^{\mathbb{R}}$
are called \emph{real} \emph{power-constructible }functions and they
are constructed from real powers and logarithms of globally subanalytic
functions.\medskip{}

Parametric integrals of tame functions also appear in the study of
functional and geometric analogues of \emph{period conjectures}. Recent
breakthroughs in functional transcendence around o-minimality and
periods have been made, concerning the transcendence of the coordinates
of the Hodge filtration, which are ratios of certain period functions.
For instance, Bakker, Klingler and Tsimerman \cite{bakker_klingler_tsimerman:tame_topology_hodge_loci}
proved that period maps are definable in the o-minimal structure $\mathbb{R}_{\mathrm{an,exp}}$,
yielding a new proof of the algebraicity of the Hodge loci. This provides
an example of an integration process whose resulting functions remain
in the original tame framework. Analogously, our Theorem \ref{thm Stability of C^K}
states that parametric integration preserves the class $\mathcal{C}^{\mathbb{R}}$.
In the same spirit, we consider (see Definition \ref{def: C^M} and
Theorems \ref{thm: C_M stability}, \ref{thm:variants}) larger classes
which we prove to be stable under parametric integration.\\

Another motivation for considering the collection $\mathcal{C}^{\mathbb{R}}$
lies beyond o-minimality: most integral transforms (Fourier, Mellin...)
are usually applied to rapidly decaying or compactly supported unary
functions, but they can be extended to classes of functions having
an asymptotic expansion (at $0$ and/or at $\infty$) in the scale
of real power-log monomials (for example, for such functions it can
be shown that the Mellin transform extends to a meromorphic function
on the whole complex plane, outside the domain of convergence of the
integral, see \cite[Section 6.7 (by D. Zagier)]{zagier_the_mellin_transform}.
In order to consider parametric versions of such transforms, one needs
some control over the behaviour of the multi-variable functions in
the collection to which we want to apply the transform. This is clear
for example in the study of oscillatory integrals of the first kind,
when the phase and the amplitude are analytic: resolution of singularities
in the class of analytic germs is used to recover information about
the asymptotic expansion of such parametric integrals. 
This is the strategy developed, for example in \cite{Arnold}, \cite{Malgrange} and \cite{Varchenko}, in which the powers appearing in the asymptotic expansion of certain integral transforms with an analytic phase $f$ (and a compactly supported amplitude) are expressed, using resolution of singularities of $f$, in terms of numerical invariants of the singularity of $f$ at the origin.
When applying
parametric integral transforms to a class $\mathcal{F}$ of functions
in several variables, it is hence important to have information about
the geometry of the domain of the functions in $\mathcal{F}$ and
to have some well-behaved theory of resolution of singularities adapted
to the class $\mathcal{F}$. This is where o-minimality plays a central
role: the key result here is a version of local resolution of singularities
called the \emph{subanalytic preparation theorem} \cite{lr:prep},
\cite{parusinski:lipschitz_stratification_subanalytic}, together
with cell-decomposition and piecewise analyticity arguments to patch
together the local results into a global stability statement.
This viewpoint is implemented in \cite{acrs}, \cite{ccs:FourierMellin}, and in the article in preparation \cite{ccms:rate-decay}, in which we systematically study the rate of decay of Fourier transforms of subanalytic functions, as well as of functions of $\mathcal{C}^\mathbb{R}$. More specifically, we investigate the interplay between rapid decay and holomorphic extension to certain complex domains around the real axis.

Understanding the stability of wide collections of natural functions under oscillatory integral
transforms appears as a key motivation for the theory of distributions. The reader may find in \cite{acrs} an illustration of the importance of controlling asymptotics in the study of certain classes of distributions.

In this paper we study \emph{parametric} Mellin
transforms of functions in $\mathcal{C}^{\mathbb{R}}$, exploiting
both the o-minimal (subanalytic) nature of the domain of the functions
and a preparation theorem available for the functions in $\mathcal{C}^{\mathbb{R}}$.
We define a collection of functions which contains the parametric
Mellin transforms of the functions in $\mathcal{C}^{\mathbb{R}}\left(X\right)$,
for $X\subseteq\mathbb{R}^{m}$ a globally subanalytic set, and stable
under integration with respect to the variable $x\in X$: our starting
point is $\mathcal{C}^{\mathbb{R}},$ a collection of functions defined
on subanalytic sets. We then apply an integral transform which depends
on a complex parameter $s$, which we want to keep separate from the
subanalytic variables, in the sense that we will not integrate with
respect to $s$. For this, we construct a collection $\mathcal{C}^{\mathcal{M}}$
of $\mathbb{C}$-algebras of functions of the variables $\left(s,x\right)$
(where $s$ is a single complex variable and $x$ is a tuple of variables
ranging in a subanalytic set) which contain the parametric Mellin
transforms of power-constructible functions, and stable under parametric
integration. In \cite{ccmrs:integration-oscillatory}, where we considered
the parametric Fourier transforms of constructible functions, the
corresponding system of $\mathbb{C}$-algebras is described in terms
of \emph{transcendental elements}, which are themselves integral operators
evaluated at constructible functions. Here instead we give an explicit
description of parametric Mellin transforms in terms of series of
functions of a simple special form.

The functions in $\mathcal{C}^{\mathcal{M}}$ will be shown to depend
meromorphically on the variable $s$. This, together with Theorem
\ref{thm: C_M stability}, will be used to provide a \emph{meromorphic
extension }of the parametric Mellin transform to the whole complex
plane. A classical result in this spirit is proven in \cite{atiyah:resolution_singularities_division_distribution}
(see also \cite[Th. 1.4]{greenblatt:_resolution_asymptotic_expansion_integrals}
for a more recent and simplified proof): given a real analytic function
$f$ defined in a open neighbourhood $U$of $0\in\mathbb{R}^{n}$,
for every $\mathcal{C}^{\infty}$ function $\varphi$ whose support
is compact and contained in $U$, the integral of $f^{s} \varphi$,
initially defined as a holomorphic function on $\Re\left(s\right)>0$,
extends to a meromorphic function on $\mathbb{C}$.

As the Mellin transform is usually considered as a function of a complex
parameter, we leave the realm of real-valued functions and of o-minimality.
There is hence no reason to restrict ourselves to \emph{real} powers
of subanalytic functions. Therefore, we define \emph{complex power-constructible}
functions, prove that they form a collection $\mathcal{C}^{\mathbb{C}}$
which is stable under parametric integration (see Definition \ref{def:C^K}
and Theorem \ref{thm Stability of C^K} below, case $\mathbb{K}=\mathbb{C}$)
and study their parametric Mellin transforms. The purely imaginary
powers of subanalytic functions introduce now some nontrivial oscillatory
phenomena, which lead us to invoke results from the theory of continuously
distributed functions mod 1 (see Section \ref{subsec:Non-compensation-arguments}).  
Despite the presence of oscillatory functions, which forces us to leave the realm of o-minimality, Theorem \ref{thm: C_M stability} and its consequences show that the class $\mathcal{C}^{\mathcal{M}}$ is geometrically tame, in a broader sense.

\medskip{}

The paper is organized as follows. In Section \ref{sec:Main results},
we introduce several classes of functions, for which we prove stability
under parametric integration: power-constructible functions (Definition
\ref{def:C^K}), parametric power-constructible functions (Definition
\ref{def: C^M}) and some variants (Section \ref{subsubsec:Variants}).
The main results about these classes are stated in Theorems \ref{thm Stability of C^K},
\ref{thm: C_M stability} and \ref{thm:variants}. In Section \ref{sec:Notation-and-general strategy}
we introduce the three basic tools that will be used in the proofs
of the main results: a non-compensation argument about finite sums
of purely imaginary powers, the properties of parametric strong functions
(which are the building blocks in the construction of the class of
parametric power-constructible functions) and the previously mentioned
subanalytic preparation theorem, from which we derive the consequences
needed in our setting. Section \ref{sec:Preparation of powers} is
devoted to preparing the functions in the classes under consideration
in a particularly simple way with respect to a given subanalytic variable.
This will allow in Section \ref{sec:Integration-of-gen of C^M} to
provide a first result about integrating a generator of a class with
respect to a single variable. The proofs of the general stability
statements are carried out in Section \ref{sec:Stability-of (parametric) powers}.

\section{Notation, definitions and main results\label{sec:Main results}}

A subset $X$ of $\mathbb{R}^{m}$ is globally subanalytic if it is
the image under the canonical projection from $\mathbb{R}^{m+n}$
to $\mathbb{R}^{m}$ of a globally semianalytic subset of $\mathbb{R}^{m+n}$
(i.e. a subset $Y\subseteq\mathbb{R}^{m+n}$ such that, in a neighbourhood
of every point of $\mathbb{P}^{1}\left(\mathbb{R}\right)^{m+n}$,
$Y$ is described by finitely many analytic equations and inequalities).
Equivalently, $X$ is definable in the o-minimal structure $\mathbb{R}_{\text{an}}$
(see for example \cite{vdd:d}). Thus, the logarithm $\log:\left(0,+\infty\right)\longrightarrow\mathbb{R}$
and the power map $x^{y}:\left(0,+\infty\right)\times\mathbb{R}\longrightarrow\mathbb{R}$
are functions whose graph is not subanalytic, but they are definable
in the o-minimal structure $\mathbb{R}_{\mathrm{an},\exp}$ (see for
example \cite{dmm:exp}).

Throughout this paper $X\subseteq\mathbb{R}^{m}$ will be a globally
subanalytic set (from now on, just \textquotedblleft \emph{subanalytic}
\emph{set}\textquotedblright , for short). Denote by $\mathcal{S}\left(X\right)$
the collection of all subanalytic functions on $X$, i.e. all the
functions of domain $X$ whose graph is a subanalytic set, and let
$\mathcal{S}_{+}\left(X\right)=\left\{ f\in\mathcal{S}\left(X\right):\ f\left(X\right)\subseteq\left(0,+\infty\right)\right\} $.
\begin{defn}[Constructible functions]
\label{def: subanalytic and constructible}Let $\mathcal{C}\left(X\right)$
be the $\mathbb{R}$-algebra of \emph{constructible} \emph{functions
on $X$,} generated by all subanalytic functions and their logarithms:
\[
\mathcal{C}\left(X\right)=\left\{ \sum_{i=1}^{N}f_{i}\prod_{j=1}^{M}\log g_{i,j}:\ M,N\in\mathbb{N}^{\times},\ f_{i}\in\mathcal{S}\left(X\right),g_{i,j}\in\mathcal{S}_{+}\left(X\right)\right\} .
\]

Define $\mathcal{C}=\left\{ \mathcal{C}\left(X\right):\ \ X\subseteq\mathbb{R}^{m}\text{ subanalytic},\ m\in\mathbb{N}\right\} $.
\end{defn}
By \cite{lr:prep,clr,cluckers-miller:stability-integration-sums-products},
$\mathcal{C}$ is the smallest collection of $\mathbb{R}$-algebras
containing $\mathcal{S}$ and stable under parametric integration.
Notice that constructible functions are definable in $\mathbb{R}_{\mathrm{an,}\exp}$.

A function defined on $X$ and taking its values in $\mathbb{C}$
is called a \emph{complex-valued subanalytic (constructible, resp.)
function} if its real and imaginary parts are in $\mathcal{S}\left(X\right)$
(in $\mathcal{C}\left(X\right)$, resp.).

\subsection{\label{subsec:power-constructible}Power-constructible functions}

For $\mathbb{K}\subseteq\mathbb{C}$ a subfield, write 
\[
\mathcal{S}_{+}^{\mathbb{K}}\left(X\right)=\left\{ f^{\alpha}:\ f\in\mathcal{S}_{+}\left(X\right),\ \alpha\in\mathbb{K}\right\} .
\]

Let $\mathbb{F}_{\mathbb{K}}$ be $\mathbb{R}$ if $\mathbb{K}\subseteq\mathbb{R}$
and $\mathbb{C}$ otherwise.
\begin{defn}[Power-constructible functions]
\label{def:C^K}Let $\mathcal{C}^{\mathbb{K}}\left(X\right)$ be
the $\mathbb{F}_{\mathbb{K}}$-algebra generated by the logarithms
and the $\mathbb{K}$-powers of the subanalytic functions on $X$,
i.e.
\[
\mathcal{C}^{\mathbb{K}}\left(X\right)=\left\{ \sum_{i=1}^{N}c_{i}\prod_{j=1}^{M}f_{i,j}^{\alpha_{i,j}}\log g_{i,j}:\ M,N\in\mathbb{N}^{\times},\ f_{i,j},g_{i,j}\in\mathcal{S}_{+}\left(X\right),\ \alpha_{i,j}\in\mathbb{K},c_{i}\in\mathbb{F}_{\mathbb{K}}\right\} .
\]

Let
\[
\mathcal{C}^{\mathbb{\mathbb{K}}}=\left\{ \mathcal{C}^{\mathbb{\mathbb{K}}}\left(X\right):\ X\subseteq\mathbb{R}^{m}\text{ subanalytic},\ m\in\mathbb{N}\right\} .
\]
The functions in $\mathcal{C}^{\mathbb{K}}$ are called \emph{$\mathbb{K}$-power-constructible
functions}.
\end{defn}
\begin{rem}
\label{rem: real and im part of power-constr}Notice that $\mathcal{C}^{\mathbb{Q}}=\mathcal{C}$
and if $\mathbb{K}\subseteq\mathbb{R}$, then the functions in $\mathcal{C}^{\mathbb{K}}$
are definable in $\mathbb{R}_{\mathrm{an,}\exp}$. If $\mathbb{K}\not\subseteq\mathbb{R}$,
then by definition $\mathcal{C}^{\mathbb{K}}\left(X\right)$ is a
$\mathbb{C}$-algebra. However, if $h=\sum c_{i}\prod f_{i,j}^{\alpha_{i,j}}\log g_{i,j}$
is such that all the exponents $\alpha_{i,j}$ belong to $\mathbb{R}$,
then the real and imaginary parts of $h$ belong to the $\mathbb{R}$-algebra
$\mathcal{C}^{\mathbb{R}}\left(X\right)$. 
\end{rem}
Let $\mathbb{R}_{\text{alg}}$ be the field of real algebraic numbers
and consider the expansion $\mathbb{R}_{\text{an}}^{\mathbb{R}_{\text{alg}}}$
of $\mathbb{R}_{\text{an }}$ by all power functions with exponents
in $\mathbb{R}_{\text{alg}}$. It is shown in \cite{kaiser:first_order_tameness_measures}
that the parametric integrals of all the functions definable in $\mathbb{R}_{\text{an}}^{\mathbb{R}_{\text{alg}}}$
are definable in $\mathbb{R}_{\text{an},\exp}$. However, this is
not the case if we allow the exponents of the power functions to range
in the whole field $\mathbb{R}$. 
Indeed, in \cite[Prop. 2.1]{soufflet:asymptotic_expansions}
the author produces an example of a function $f$ in two variables
$x$ and $y$, defined as a composition of subanalytic functions and
irrational powers (in particular, definable in the o-minimal structure $\mathbb{R}_{\mathrm{an,}\exp}$
and even in $\mathbb{R}_{\text{an}}^{\mathrm{pow}}$), such that the
parametric integral (with respect to $y$) of $f$ is not definable
in $\mathbb{R}_{\mathrm{an,}\exp}$. 
The argument goes as follows: Soufflet proves that functions definable in $\mathbb{R}_{\text{an},\exp}$ that have a formal asymptotic expansion in a logarithmic scale (the real scale $\mathfrak{E}_\mathbf{R}$ defined in \cite[p. 129]{soufflet:asymptotic_expansions}) have the property that such an expansion is convergent (see  \cite[Theorem 2.5]{soufflet:asymptotic_expansions}). Now, in \cite[Proposition 2.1]{soufflet:asymptotic_expansions} he shows that the parametric integral of $f$ has a divergent asymptotic expansion in this scale.
More precisely the function $f$
is obtained by right-composing a subanalytic function by a suitable
irrational power of the variable $y$. This procedure differs from
the one in the above definition, where we left-compose subanalytic
functions with irrational powers. Indeed, $f$ is not power-constructible,
as our first result (Theorem \ref{thm Stability of C^K} below) is
that $\mathcal{C}^{\mathbb{K}}$ is stable under parametric integration.
\begin{thm}
\label{thm Stability of C^K}Let $h\in\mathcal{C}^{\mathbb{K}}\left(X\times\mathbb{R}^{n}\right)$.
There exists $H\in\mathcal{C}^{\mathbb{K}}\left(X\right)$ such that
\[
\forall x\in\mathrm{Int}\left(h;X\right),\ \int_{\mathbb{R}^{n}}h\left(x,y\right)\text{d}y=H\left(x\right),
\]
where 
\[
\mathrm{Int}\left(h;X\right):=\left\{ x\in X:\ y\longmapsto h\left(x,y\right)\in L^{1}\left(\mathbb{R}^{n}\right)\right\} .
\]
\end{thm}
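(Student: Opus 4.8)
The plan is to reduce to integrating with respect to a single variable, to prepare the integrand, and then to integrate a generator term by term, collecting the boundary contributions and summing over finitely many cells. First I would reduce to $n=1$. By Tonelli's theorem the integral of $h\in\mathcal{C}^{\mathbb{K}}(X\times\mathbb{R}^{n})$ over $\mathbb{R}^{n}$ can be computed one variable at a time: treating $(x,y_{1},\dots,y_{n-1})$ as parameters ranging over the subanalytic set $X\times\mathbb{R}^{n-1}$, I integrate out $y_{n}$ first. Provided the single-variable case yields an output lying in $\mathcal{C}^{\mathbb{K}}(X\times\mathbb{R}^{n-1})$, an induction on $n$ gives the general statement; the only care needed is the standard Fubini--Tonelli bookkeeping ensuring that, for $x\in\mathrm{Int}(h;X)$, the slices are integrable for almost every $(y_{1},\dots,y_{n-1})$ so that the iterated integral agrees with the multiple integral. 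Thus it suffices to treat $h\in\mathcal{C}^{\mathbb{K}}(X\times\mathbb{R})$ and integrate the last variable, renamed $y$.

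Next I would prepare $h$ with respect to $y$, using the preparation results for power-constructible functions (Section \ref{sec:Preparation of powers}), which rest on the subanalytic preparation theorem. This partitions $X\times\mathbb{R}$ into finitely many subanalytic cells, on each of which $h$ is a finite sum of generators of the form
\[
c(x)\,\bigl|y-\theta(x)\bigr|^{\alpha}\bigl(\log\bigl|y-\theta(x)\bigr|\bigr)^{k}\,U(x,y),
\]
with $\theta$ a subanalytic center, $c\in\mathcal{C}^{\mathbb{K}}(X)$, $\alpha\in\mathbb{K}$ (rational exponents from the subanalytic preparation are absorbed since $\mathbb{Q}\subseteq\mathbb{K}$), $k\in\mathbb{N}$, and $U$ a unit admitting a convergent expansion in a scale of powers and logarithms of subanalytic monomials that are bounded on the cell. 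After translating by $\theta$ and rescaling by the subanalytic functions bounding the cell, each cell integral reduces to a model integral in the centered variable $t$ over $(0,1)$, $(1,+\infty)$, or $(0,+\infty)$.

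The heart of the matter is the integration of a single generator (Section \ref{sec:Integration-of-gen of C^M}). I would expand the unit $U$ into its convergent series and integrate term by term using the elementary primitive of $t^{s}(\log t)^{k}$: for $\Re(s)>-1$ each contribution over a model interval with subanalytic endpoint $b(x)$ is a finite combination of terms $b(x)^{\alpha+1}\bigl(\log b(x)\bigr)^{j}$, which lie in $\mathcal{C}^{\mathbb{K}}(X)$ since $b$ is subanalytic. Two points require attention: the critical exponent $\Re(s)=-1$ with vanishing imaginary part, where $\int t^{-1}(\log t)^{k}$ raises the logarithmic power by one and thereby produces the new logarithmic generators (exactly as in the constructible case); and the fact that the term-by-term integration converges back to a function of $\mathcal{C}^{\mathbb{K}}(X)$, which I would establish via the properties of parametric strong functions (Section \ref{sec:Notation-and-general strategy}), guaranteeing that a convergent series of prepared monomials sums to a power-constructible function.

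The main obstacle I anticipate is correctly identifying $\mathrm{Int}(h;X)$ in the complex case $\mathbb{K}=\mathbb{C}$, where purely imaginary powers $|t|^{i\tau}$ oscillate without decaying, so that a naive termwise bound is unavailable: cancellation among oscillatory monomials could a priori improve the $L^{1}$ behaviour near $\theta$. Here I would invoke the non-compensation argument for finite sums of distinct purely imaginary powers (Section \ref{subsec:Non-compensation-arguments}). Grouping the prepared terms by the real part of their exponents, non-compensation shows that a group whose common real part is $\le -1$ cannot be rendered integrable by oscillation, so on the integrability locus the coefficient combination of each such group must vanish identically. This simultaneously exhibits $\mathrm{Int}(h;X)$ as cut out inside $X$ by the vanishing of finitely many power-constructible functions and reduces the integral, on that locus, to the sum of the finitely many convergent ($\Re(\alpha)>-1$) contributions computed above. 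Summing the resulting $\mathcal{C}^{\mathbb{K}}(X)$ functions over the finitely many cells — whose pairwise overlaps have measure zero and do not affect the integral — produces the desired $H\in\mathcal{C}^{\mathbb{K}}(X)$, which is then reassembled through the induction of the first step (see Section \ref{sec:Stability-of (parametric) powers}).
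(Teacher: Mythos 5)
Your proposal is correct and follows essentially the same route as the paper: reduction to one variable via Fubini, subanalytic/power-constructible preparation into prepared generators on cells, term-by-term integration of the strong-unit series (with the $\log$-power increment at the critical exponent), and the non-compensation argument for purely imaginary powers to cut out the integrability locus on cells with unbounded fibres. The paper merely packages this as the $s$-independent special case of the parametric Theorem \ref{thm:variants} (see Remark \ref{rem: proof of 2.5}), so the underlying steps coincide with yours.
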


\subsection{\label{subsec:strong functions}Strong functions}

In the subanalytic and constructible preparation theorems, a special
role is played by the so-called \emph{strong functions}: these are
bounded subanalytic functions which can be expressed as the composition
of a single power series (convergent in a neighbourhood of the closed
unit polydisk) with a bounded subanalytic map. In order to define
parametric Mellin transforms, we will need a parametric version of
strong functions, where the parameter will be the complex number $s$
appearing in the integration kernel of the Mellin transform.

We first give the definition of a subanalytic strong function and
then proceed to define its parametric counterpart.
\begin{defn}
\label{def: strongly bdd}For $N\in\mathbb{N}$, we let $\mathcal{S}_{c}^{N}\left(X\right)$
be the collection of all maps $\psi:X\longrightarrow\mathbb{R}^{N}$
with components in $\mathcal{S}\left(X\right)$, such that $\overline{\psi\left(X\right)}$
is contained in the closed polydisk of $\mathbb{R}^{N}$ centreed
at zero and of radius $1$. We call
\[
\mathcal{S}_{c}\left(X\right)=\bigcup_{N\in\mathbb{N^{\times}}}\mathcal{S}_{c}^{N}\left(X\right)
\]
the collection of all \emph{1-bounded} subanalytic maps defined on
$X$.
\end{defn}
The following definition is inspired by \cite[Definition 3.3]{cluckers_miller:loci_integrability}
and \cite[Definition 3.6]{ccmrs:integration-oscillatory}.
\begin{defn}[Strong functions]
\label{def: classical strong}We say that $W:X\longrightarrow\mathbb{F}_{\mathbb{K}}$
is an \emph{$\mathbb{F}_{\mathbb{K}}$-valued} \emph{subanalytic strong
function} if there are $N\in\mathbb{N}^{\times}$, a $1$-bounded
subanalytic map $\psi:X\longrightarrow\mathbb{R}^{N}$ and a series
$F\in\mathbb{F}_{\mathbb{K}}\left\llbracket Z\right\rrbracket $ in
$N$ variables $Z$, which converges in a neighbourhood of the closed
polydisk $D^{N}$ centreed at zero and of radius $\frac{3}{2}$ in
$\mathbb{R}^{N}$ (we will say for short that $F$ converges \emph{strongly},
see below), such that $W=F\circ\psi$. If furthermore $\left|F-1\right|<\frac{1}{2}$,
the function $W$ is called a \emph{strong unit} (see \cite[Remarks 3.7]{ccmrs:integration-oscillatory}). 
\end{defn}
We are now ready to define parametric strong functions: these can
be written as certain convergent series composed with $1$-bounded
subanalytic maps, but the coefficients of the series are now (meromorphic)
functions of a complex parameter $s$.
\begin{defn}
\label{def: strong convergence}Let $\mathcal{E}$ be the field of
meromorphic functions $\xi:\mathbb{C}\longrightarrow\mathbb{C}$ and
denote by $D^{N}$ the closed polydisk of radius $\frac{3}{2}$ and
centre $0\in\mathbb{R}^{N}$.

Given a formal power series $F=\sum_{I}\xi_{I}\left(s\right)Z^{I}\in\mathcal{E}\left\llbracket Z\right\rrbracket $
in $N$ variables $Z$ and with coefficients $\xi_{I}\in\mathcal{E}$,
we say that $F$ \emph{converges strongly} if there exists a closed
discrete set $P\left(F\right)\subseteq\mathbb{C}$ (called the \emph{set
of poles} of $F$) such that:
\begin{itemize}
\item for every $s_{0}\in\mathbb{C}\setminus P\left(F\right)$, the power
series $F\left(s_{0},Z\right)\in\mathbb{C}\left\llbracket Z\right\rrbracket $
converges in a neighbourhood of $D^{N}$ (thus $F$ defines a function
on $\left(\mathbb{C}\setminus P\left(F\right)\right)\times D^{N}$);
\item for every $s_{0}\in\mathbb{C}$ there exists $m=m\left(s_{0}\right)\in\mathbb{N}$
such that for all $z_{0}\in D^{N}$, the function $\left(s,z\right)\longmapsto\left(s-s_{0}\right)^{m}F\left(s,z\right)$
has a holomorphic extension on some complex neighbourhood of $\left(s_{0},z_{0}\right)$
\item $P\left(F\right)$ is the set of all $s_{0}\in\mathbb{C}$ such that
the minimal such $m\left(s_{0}\right)$ is strictly positive. 
\end{itemize}
\end{defn}
\begin{rem}
\label{rem: strong convergence}It is easy to see that $P\left(F\right)$
coincides with the set of poles of the coefficients $\xi_{I}$ and
that for each $s_{0}\in P\left(F\right)$ there is an integer $m\in\mathbb{N}$
such that for all $I,\ \text{ord}_{s_{0}}\left(\xi_{I}\right)\leq m$.
\end{rem}
\begin{defn}[Parametric strong functions]
\label{def: parametric strong}Given a closed discrete set $P\subseteq\mathbb{C}$,
a function $\Phi:\left(\mathbb{C}\setminus P\right)\times X\longrightarrow\mathbb{C}$
is called a \emph{parametric strong function} on $X$ if there exist
a $1$-bounded subanalytic map $\psi\in\mathcal{S}_{c}^{N}\left(X\right)$
and a strongly convergent series $F=\sum_{I}\xi_{I}\left(s\right)Z^{I}\in\mathcal{E}\left\llbracket Z\right\rrbracket $
with $P\left(F\right)\subseteq P$ such that,
\[
\forall\left(s,x\right)\in\left(\mathbb{C}\setminus P\right)\times X,\ \Phi\left(s,x\right)=F\circ\left(s,\psi\left(x\right)\right)=\sum_{I}\xi_{I}\left(s\right)\left(\psi\left(x\right)\right)^{I}.
\]
Define $\mathcal{A}\left(X\right)$ as the collection of all parametric
strong functions on $X$ (defined on sets of the form $\left(\mathbb{C}\setminus P\right)\times X$,
for any closed discrete $P\subseteq\mathbb{C}$). Note that if $X\subseteq\mathbb{R}^{0}$
then $\mathcal{A}\left(X\right)=\mathcal{E}$. We let
\[
\mathcal{A}=\left\{ \mathcal{A}\left(X\right):\ X\subseteq\mathbb{R}^{m}\text{ subanalytic},\ m\in\mathbb{N}\right\} .
\]
\end{defn}
\begin{rem}
\label{rem: defined outside P}Since the same $\Phi\in\mathcal{A}\left(X\right)$
could be presented by two different series $F$ with different poles,
we will say ``let $\Phi\in\mathcal{A}\left(X\right)$ have no poles
outside some closed discrete set $P\subseteq\mathbb{C}$'' to mean
that there exist $F,\psi$ such that $\Phi=F\circ\left(s,\psi\right)$
and $P\left(F\right)\subseteq P$. By the same argument, $\mathcal{A}\left(X\right)$
is a $\mathbb{C}$-algebra, up to defining the sum and product on
a common domain. 
\end{rem}

\subsection{\label{subsec:Parametric powers}Parametric powers and the Mellin
transform}

We introduce two parametric integral operator which will be the object
of our study.
\begin{defn}
\label{def: param power and Mellin tr}$\ $
\begin{itemize}
\item For $X\subseteq\mathbb{R}^{m}$ subanalytic, define
\[
\mathcal{P}\left(\mathcal{S}_{+}\left(X\right)\right)=\{P_{f}:\mathbb{C}\times X\longrightarrow\mathbb{C}\text{\ such\ that}\ P_{f}\left(s,x\right)=f\left(x\right)^{s},\ \text{for\ some\ }f\in\mathcal{S}_{+}\left(X\right)\}.
\]
 The \emph{parametric powers of $\mathcal{S}$} are the functions
in the collection
\begin{align*}
\mathcal{P}\left(\mathcal{S}_{+}\right) & =\{\mathcal{P}\left(\mathcal{S}_{+}\left(X\right)\right):\ X\subseteq\mathbb{R}^{m}\text{ subanalytic},\ m\in\mathbb{N}\}.
\end{align*}
\item Let $\mathcal{F}=\left\{ \mathcal{F}\left(X\right):\ X\subseteq\mathbb{R}^{m}\text{ subanalytic},\ m\in\mathbb{N}\right\} $
be a collection of real- or complex-valued functions and $\Sigma\subseteq\mathbb{C}$.
If $f\in\mathcal{F}\left(X\times\mathbb{R}\right)$ is such that for
all $\left(s,x\right)\in\Sigma\times X,\ y\longmapsto y^{s-1}f\left(x,y\right)\in L^{1}\left(\mathbb{R}^{>0}\right)$,
define the parametric Mellin transform of $f$ on $\Sigma\times X$
as the function 
\[
\mathcal{M}_{\Sigma}\left[f\right]\left(s,x\right)=\int_{0}^{+\infty}y^{s-1}f\left(x,y\right)\text{d}y,\ \forall\left(s,x\right)\in\Sigma\times X.
\]
\\
The \emph{parametric Mellin transforms of $\mathcal{F}$ on $\Sigma$}
are the elements of the collection 
\[
\mathcal{M}_{\Sigma}\left[\mathcal{F}\right]=\left\{ \mathcal{M}_{\Sigma}\left[f\right]:\ f\text{ as above, for some }X\right\} .
\]
\end{itemize}
\end{defn}
Our next aim is to define a collection of algebras of functions which
is stable under parametric integration and which contains both the
parametric powers of $\mathcal{S}$ and the Mellin transforms of $\mathcal{C}^{\mathbb{C}}$
on $\mathbb{C}$ (Definition \ref{def: C^M}). In order to motivate
the definition, let us give three simple examples.
\begin{examples}
\label{exs: param powers and Mellin}Let $X\subseteq\mathbb{R}^{m}$
be subanalytic and $a,b\in\mathcal{S}\left(X\right)$ be such that
for all $x\in X,\ 1\leq a\left(x\right)\leq2\leq b\left(x\right)$.
\begin{enumerate}[resume]
\item Let $\chi_{1}\left(x,y\right)$ be the characteristic function of
the set
\[
B_{1}=\left\{ \left(x,y\right):\ x\in X,\ 0<y<a\left(x\right)\right\} 
\]
 and consider the subanalytic function
\[
f\left(x,y\right)=\chi_{1}\left(x,y\right)\frac{a\left(x\right)b\left(x\right)}{a\left(x\right)b\left(x\right)-y}\in\mathcal{S}\left(X\times\mathbb{R}\right).
\]
Since $1\leq f\left(x,y\right)\leq2$, the parametric Mellin transform
of $f$ is well defined on $\Sigma_{1}=\left\{ s\in\mathbb{C}:\ \Re\left(s\right)>0\right\} $,
is holomorphic in $s$ and is given by
\begin{align*}
\mathcal{M}_{\Sigma_{1}}\left[f\right]\left(s,x\right) & =\int_{0}^{a\left(x\right)}y^{s-1}\frac{a\left(x\right)b\left(x\right)}{a\left(x\right)b\left(x\right)-y}\text{d}y\\
 & =\int_{0}^{a\left(x\right)}y^{s-1}\sum_{k\geq0}\left(\frac{y}{a\left(x\right)b\left(x\right)}\right)^{k}\text{d}y.
\end{align*}
The series in the above integral converges normally on $B_{1}$, hence
we can permute sum and integral and write
\begin{align*}
\mathcal{M}_{\Sigma_{1}}\left[f\right]\left(s,x\right) & =\sum_{k\geq0}\left(a\left(x\right)b\left(x\right)\right)^{-k}\int_{0}^{a\left(x\right)}y^{s-1+k}\text{d}y\\
 & =\left(a\left(x\right)\right)^{s}\sum_{k\geq0}\frac{\left(b\left(x\right)\right)^{-k}}{s+k}.
\end{align*}
Notice that in this computation we create both the parametric power
of a subanalytic function, and a series of functions depending on
the complex variable $s$ and on the real variable $x$. The above
series defines a parametric strong function on $\mathbb{C}\times X$,
with poles at zero and at the negative integers.
\item Let $\Sigma_{2}=\left\{ s\in\mathbb{C}:\ \Re\left(s\right)<1\right\} $
and $\chi_{2}\left(x,y\right)$ be the characteristic function of
the set
\[
B_{2}=\left\{ \left(x,y\right):\ x\in X,\ y>a\left(x\right)\right\} .
\]
Consider the subanalytic function $g\left(x,y\right)=\chi_{2}\left(x,y\right)y\left(1+\frac{a\left(x\right)}{b\left(x\right)y}\right)\in\mathcal{S}\left(X\times\mathbb{R}\right)$.
We aim to compute the parametric integral (with respect to the variable
$y$) of the function $y^{-2}\left(g\left(x,y\right)\right)^{s}$.
Since $0\leq\frac{a\left(x\right)}{b\left(x\right)y}\leq\frac{1}{2}$
on $B_{2}$, such an integral exists on $\Sigma_{2}\times X$ and
\begin{align*}
I\left(g;\Sigma_{2}\times X\right):=\int_{a\left(x\right)}^{+\infty}y^{s-2}\left(1+\frac{a\left(x\right)}{b\left(x\right)y}\right)^{s}\text{d}y & =\int_{a\left(x\right)}^{+\infty}y^{s-2}\sum_{k}{s \choose k}\left(\frac{a\left(x\right)}{b\left(x\right)y}\right)^{k}\text{d}y\\
=\sum_{k}{s \choose k}\left(\frac{a\left(x\right)}{b\left(x\right)}\right)^{k}\int_{a\left(x\right)}^{+\infty}y^{s-2-k}\text{d}y & =-\left(a\left(x\right)\right)^{s-1}\sum_{k}{s \choose k}\frac{\left(b\left(x\right)\right)^{-k}}{s-1-k}.
\end{align*}
Again, the above series defines a parametric strong function on $\mathbb{C}\times X$,
with poles at the positive integers.
\item Let $h\left(s,x,y\right)=f\left(x,y\right)y^{s-1}+y^{-2}\left(g\left(x,y\right)\right)^{s}$.
Direct calculation shows that, letting
\[
\text{Int}\left(h;\left(\mathbb{C}\setminus\mathbb{Z}\right)\times X\right):=\left\{ \left(s,x\right)\in\left(\mathbb{C}\setminus\mathbb{Z}\right)\times X:\ y\longmapsto h\left(s,x,y\right)\in L^{1}\left(\mathbb{R}\right)\right\} ,
\]
we have 
\[
\text{Int}\left(h;\left(\mathbb{C}\setminus\mathbb{Z}\right)\times X\right)=\left\{ s\in\mathbb{C}\setminus\mathbb{Z}:\ 0<\Re\left(s\right)<1\right\} \times X.
\]
However, the function $H$ defined on $\left(\mathbb{C}\setminus\mathbb{Z}\right)\times X$
by
\[
H\left(s,x\right)=\left(a\left(x\right)\right)^{s}\sum_{k\geq0}\frac{\left(b\left(x\right)\right)^{-k}}{s+k}-\left(a\left(x\right)\right)^{s-1}\sum_{k}{s \choose k}\frac{\left(b\left(x\right)\right)^{-k}}{s-1-k}
\]
depends meromorphically on $s$ and can be seen as an interpolation
of the integral of $h$ on the whole $\mathbb{C}\times X$.
\end{enumerate}
Given a subanalytic set $X\subseteq\mathbb{R}^{m}$, recall the definitions
of the algebras $\mathcal{A}\left(X\right)$ of parametric strong
functions and $\mathcal{C}^{\mathbb{C}}\left(X\right)$ of $\mathbb{C}$-power-constructible
functions, and of the collection $\mathcal{P}\left(\mathcal{S}_{+}\left(X\right)\right)$
of parametric powers of subanalytic functions (Definitions \ref{def: parametric strong},
\ref{def:C^K} and \ref{def: param power and Mellin tr}).
\end{examples}
\begin{defn}
\label{def: C^M}If $X\subseteq\mathbb{R}^{0}$, then define $\mathcal{C}^{\mathcal{M}}\left(X\right)=\mathcal{E}$.
If $X\subseteq\mathbb{R}^{m}$, with $m>0$, then we let $\mathcal{C}^{\mathcal{M}}\left(X\right)$
be the $\mathcal{A}\left(X\right)$-algebra generated by $\mathcal{C}^{\mathbb{C}}\left(X\right)\cup\mathcal{P}\left(\mathcal{S}_{+}\left(X\right)\right)$.
Every function $h\in\mathcal{C}^{\mathcal{M}}\left(X\right)$ can
be written on $\left(\mathbb{C}\setminus P\right)\times X$ (for some
closed discrete $P\subseteq\mathbb{C}$) as a finite sum of \emph{generators}
of the form 
\begin{equation}
\Phi\left(s,x\right)\cdot g\left(x\right)\cdot f\left(x\right)^{s},\label{eq:generator}
\end{equation}
where $g\in\mathcal{C}^{\mathbb{C}}\left(X\right),\ f\in\mathcal{S}_{+}\left(X\right)$
and $\Phi\in\mathcal{A}\left(X\right)$ has no poles outside $P$.

If $h\in\mathcal{C}^{\mathcal{M}}\left(X\right)$ can be presented
as a sum of generators in which the parametric strong functions have
no poles outside some common set $P\subseteq\mathbb{C}$, then we
say that $h$ has no poles outside $P$. We let 
\[
\mathcal{C}^{\mathcal{M}}=\left\{ \mathcal{C}^{\mathcal{M}}\left(X\right):\ X\subseteq\mathbb{R}^{m}\text{ subanalytic},\ m\in\mathbb{N}\right\} 
\]
be the collection of algebras of (complex) \emph{parametric power-constructible
functions}.
\end{defn}
Remark \ref{rem: defined outside P} also applies to the functions
in $\mathcal{C}^{\mathcal{M}}\left(X\right)$. 
\begin{rem}
If $h\in\mathcal{C}^{\mathcal{M}}\left(X\right)$ has no poles outside
some closed discrete set $P,$ then for all $s\in\mathbb{C}\setminus P,\ x\longmapsto h\left(s,x\right)\in\mathcal{C}^{\mathbb{C}}\left(X\right)$
and the dependence on the variables $x$ is piecewise analytic, by
o-minimality. Moreover, by definition of $\mathcal{A}\left(X\right)$,
for all $x\in X,\ s\longmapsto h\left(s,x\right)$ is meromorphic
on $\mathbb{C}$.
\end{rem}
The main goal of this paper is to study the nature of the parametric
integrals of functions in $\mathcal{C}^{\mathcal{M}}$. Let $X\subseteq\mathbb{R}^{m}$
be subanalytic, and consider a function $h\in\mathcal{C}^{\mathcal{M}}\left(X\times\mathbb{R}^{n}\right)$
without poles outside some closed and discrete set $P\subseteq\mathbb{C}$.
Then $h$ depends on a complex variable $s$ and on $\left(m+n\right)$
real variables (let us call them $x$, ranging in $X$, and $y$,
ranging in $\mathbb{R}^{n}$). We integrate $h$ in the variables
$y$ over $\mathbb{R}^{n}$, whenever the integral exists, and study
the nature of the resulting function. 

The set of parameters $\left(s,x\right)\in\left(\mathbb{C}\setminus P\right)\times X$
for which the integral exists is the \emph{integration locus} of $h$. 
\begin{defn}
\label{Def:IntMellin}For $h\in\mathcal{C}^{\mathcal{M}}\left(X\times\mathbb{R}^{n}\right)$
and a closed discrete set $P\subseteq\mathbb{C}$ such that $h$ has
no poles outside $P$, define
\[
\mathrm{Int}\left(h;\left(\mathbb{C}\setminus P\right)\times X\right):=\left\{ \left(s,x\right)\in\left(\mathbb{C}\setminus P\right)\times X:\ y\longmapsto h\left(s,x,y\right)\in L^{1}\left(\mathbb{R}^{n}\right)\right\} .
\]

Our main result is the following.
\end{defn}
\begin{thm}
\label{thm: C_M stability}Let $h\in\mathcal{C}^{\mathcal{M}}\left(X\times\mathbb{R}^{n}\right)$
be without poles outside some closed discrete set $P\subseteq\mathbb{C}$.
There exist a closed discrete set $P'\subseteq\mathbb{C}$ containing
$P$ and $H\in\mathcal{C}^{\mathcal{M}}\left(X\right)$ with no poles
outside $P'$ such that
\[
\forall\left(s,x\right)\in\mathrm{Int}\left(h;\left(\mathbb{C}\setminus P'\right)\times X\right),\ H\left(s,x\right)=\int_{\mathbb{R}^{n}}h\left(s,x,y\right)\mathrm{d}y.
\]
Moreover, $P'\setminus P$ is contained in a finitely generated $\mathbb{Z}$-lattice.
\end{thm}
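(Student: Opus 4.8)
The plan is to reduce the stability of $\mathcal{C}^{\mathcal{M}}$ to a one-variable integration statement and then iterate, exactly as in the classical constructible case. By Definition \ref{def: C^M}, $h$ is a finite sum of generators $\Phi(s,x,y)\cdot g(x,y)\cdot f(x,y)^{s}$, so by linearity of the integral it suffices to treat a single generator, and by Fubini it suffices to integrate with respect to one real variable at a time (say the last coordinate of $y$), absorbing the remaining $y$-variables into the parameter-block $x$. Thus the crux is: given a single generator $h\in\mathcal{C}^{\mathcal{M}}(X\times\mathbb{R})$ without poles outside $P$, produce $H\in\mathcal{C}^{\mathcal{M}}(X)$ computing $\int_{\mathbb{R}}h(s,x,y)\,\mathrm{d}y$ on the integration locus, with new poles controlled inside a finitely generated $\mathbb{Z}$-lattice. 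I would first record that $\int_{\mathbb{R}} = \int_{-\infty}^{0}+\int_{0}^{\infty}$ and, via the subanalytic change of variables $y\mapsto -y$ (which preserves all the classes involved), reduce to integrating over $(0,+\infty)$; one then splits further, using cell decomposition of $X\times\mathbb{R}$ into finitely many subanalytic cells, into the two genuinely different regimes ``$y$ bounded'' and ``$y$ near $+\infty$'', matching Examples \ref{exs: param powers and Mellin}(1) and (2).

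The main technical engine is the subanalytic preparation theorem (cited in the introduction as \cite{lr:prep}), which I would apply simultaneously to all the subanalytic data appearing in the generator — the base $f$ of the parametric power $f^{s}$, the subanalytic factors of $g\in\mathcal{C}^{\mathbb{C}}$, the $1$-bounded map $\psi$ underlying $\Phi=F\circ(s,\psi)$, and the functions defining the cell. On each prepared cell the variable $y$ can be written $y=\theta(x)\,u$ with $u$ ranging over a fixed interval and $\theta\in\mathcal{S}_{+}(X)$, and every subanalytic ingredient takes the monomial-times-strong-unit form $a(x)\,u^{\,r}\,U(x,u)$ where $U$ is a subanalytic strong function (Definition \ref{def: classical strong}). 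The goal of preparation is to make the integrand, after the substitution, a product of an explicit power $u^{s\cdot(\text{const})+(\text{const})-1}$ with a strongly convergent series in $u$ whose coefficients lie in $\mathcal{A}(X)\cdot\mathcal{C}^{\mathbb{C}}(X)$. I would expand every strong unit as its defining power series and expand $f^{s}=(a(x)\theta(x)^{r}u^{\rho})^{s}\cdot U^{s}$, with $U^{s}=\exp(s\log U)$ itself expanded as a strongly convergent series via the $\log$ of a strong unit (here $\log U$ is again a strong function because $|U-1|<\tfrac12$).

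After these expansions the integral over $u$ reduces, term by term, to the two prototype integrals $\int_{0}^{c}u^{s\alpha+\beta+k-1}\,\mathrm{d}u$ and $\int_{c}^{\infty}u^{s\alpha+\beta-k-1}\,\mathrm{d}u$, each of which evaluates to $\pm\,c^{\,s\alpha+\beta\pm k}/(s\alpha+\beta\pm k)$; resumming over the multi-index $k$ produces precisely a parametric strong function in the $x$-data times a parametric power of a subanalytic function, i.e.\ a generator of $\mathcal{C}^{\mathcal{M}}(X)$. The poles of the resulting series occur exactly where a denominator $s\alpha+\beta\pm k$ vanishes, i.e.\ at $s=(\mp k-\beta)/\alpha$; since $\alpha,\beta$ arise from the finitely many exponents produced by preparation and $k$ ranges over $\mathbb{N}$, these new poles lie in a finitely generated $\mathbb{Z}$-lattice, giving the final clause. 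The \textbf{main obstacle} I anticipate is twofold: first, justifying the interchange of summation and integration uniformly in $(s,x)$ — one must check that the relevant series converge \emph{strongly} in the sense of Definition \ref{def: strong convergence}, controlling the meromorphic $s$-dependence and the order of the poles as required by Remark \ref{rem: strong convergence}; and second, handling the boundary/resonance case where an exponent $s\alpha+\beta\pm k$ is identically zero (forcing a $\log u$ to appear upon integration), which is what feeds new logarithmic factors back into the $\mathcal{C}^{\mathbb{C}}(X)$-component and must be shown to stay inside $\mathcal{C}^{\mathcal{M}}(X)$ rather than escaping the class.
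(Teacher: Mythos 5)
Your overall architecture matches the paper's: reduce to one real variable by Fubini and induction, prepare all subanalytic data via the preparation theorem of \cite{lr:prep}, split into cells with bounded and unbounded $y$-fibres, expand the strong units and $U^{s}$ as strongly convergent series, integrate term by term using $\int y^{(\ell s+\gamma)/d}(\log y)^{\mu}\,\mathrm{d}y$, resum into parametric strong functions, and read off the new poles $s=(\mp k-\beta)/\alpha$ as lying in a finitely generated $\mathbb{Z}$-lattice. Your anticipated obstacles (strong convergence of the resummed series, and the resonance case producing an extra $\log$) are both real and are handled in the paper exactly as you sketch (the resonance case by splitting $\Phi=\Phi_{=}+\Phi_{\neq}$, the off-resonance poles by holomorphic continuation).

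However, there is a genuine gap at the very first step: you cannot reduce to a single generator ``by linearity of the integral.'' The theorem asserts $H(s,x)=\int h$ on $\mathrm{Int}(h;(\mathbb{C}\setminus P')\times X)$, the locus where the \emph{sum} $h=\sum_i T_i$ is integrable, and a priori this locus can be strictly larger than $\bigcap_i\mathrm{Int}(T_i)$: on a cell with unbounded $y$-fibres the individual generators contribute terms $g_j(s,x)\,y^{a_j+\mathrm{i}b_j}(\log y)^{\nu_j}$ with $a_j\geq -1$, and cancellation among the oscillatory factors $y^{\mathrm{i}b_j}$ could in principle make the sum integrable even though no single term is. Ruling this out is the content of Proposition \ref{prop: non compensation powers}, a non-compensation result proved via the theory of continuously uniformly distributed functions mod $1$: it shows that if $y^{r}(\log y)^{\nu}\sum_j c_j y^{\mathrm{i}\sigma_j}$ is integrable at infinity with $r\geq -1$, then all $c_j=0$. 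The paper applies this (after regrouping generators sharing the same Puiseux data, excluding a further ``collision set'' $P''_A$ of \eqref{eq:collision set}, and stratifying $\mathbb{C}$ by a non-accumulating grid) to prove $\mathrm{Int}(h\cdot\chi_A)=\bigcap_i\mathrm{Int}(T_i)$ outside a closed discrete set, which is what licenses the term-by-term computation of $H$ on the full integration locus of $h$. Without this ingredient your argument only produces $H$ on the possibly smaller set $\bigcap_i\mathrm{Int}(T_i)$, so the statement as claimed is not established.
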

The above examples and Theorem \ref{thm: C_M stability} suggest to
introduce the following definition.
\begin{defn}
\label{def: gen param mellin transf}Let $\mathcal{G}\left(X\right)$
be a collection of functions $f:\mathbb{C}\times X\longrightarrow\mathbb{C}$,
where $X\subseteq\mathbb{R}^{m}$ is subanalytic and $f$ depends
meromorphically on its complex variable $s$. We say that $\mathcal{G}=\left\{ \mathcal{G}\left(X\right):\ X\subseteq\mathbb{R}^{m},\ m\in\mathbb{N}\right\} $
is \emph{stable under generalized parametric Mellin transform} if
whenever $f\in\mathcal{G}$$\left(X\times\mathbb{R}\right)$ has no
poles outside some closed discrete set $P\subseteq\mathbb{C}$, there
exist a closed discrete set $P'\subseteq\mathbb{C}$ containing $P$
and $\mathcal{M}_{f}\in\mathcal{G}\left(X\right)$ without poles outside
$P'$ such that, if $g\left(s,x,y\right)=y^{s-1}f\left(s,x,y\right)\chi_{\left(0,+\infty\right)}\left(y\right)$,
then 
\[
\forall\left(s,x\right)\in\mathrm{Int}\left(g;\left(\mathbb{C}\setminus P'\right)\times X\right),\ \mathcal{M}_{f}\left(s,x\right)=\int_{0}^{+\infty}y^{s-1}f\left(s,x,y\right)\text{d}y.
\]
\end{defn}
\begin{cor}
\label{cor: C^M smallest system}$\mathcal{C}^{\mathcal{M}}$ is the
smallest system of $\mathcal{A}$-algebras containing $\mathcal{\mathcal{C}}^{\mathbb{C}}$
and stable under the generalized parametric Mellin transform.
\end{cor}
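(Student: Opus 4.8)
The plan is to check that $\mathcal{C}^{\mathcal{M}}$ enjoys the two defining properties, and then to prove minimality by exhibiting, inside any competing system, all the generators of $\mathcal{C}^{\mathcal{M}}$. The containment $\mathcal{C}^{\mathbb{C}}\subseteq\mathcal{C}^{\mathcal{M}}$ is immediate, since every $g\in\mathcal{C}^{\mathbb{C}}(X)$ is the generator $1\cdot g\cdot 1^{s}$ of the form (\ref{eq:generator}). For stability under the generalized parametric Mellin transform, I would take $f\in\mathcal{C}^{\mathcal{M}}(X\times\mathbb{R})$ without poles outside $P$ and form $g(s,x,y)=y^{s-1}f(s,x,y)\chi_{(0,+\infty)}(y)$. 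On the support $\{y>0\}$ one has $y^{s-1}=y^{-1}\cdot y^{s}$, where $y^{-1}\chi_{(0,+\infty)}(y)\in\mathcal{C}^{\mathbb{C}}(X\times\mathbb{R})$ and (after extending the coordinate $y$ to a positive subanalytic function off its support) $y^{s}$ is a parametric power in $\mathcal{P}(\mathcal{S}_{+}(X\times\mathbb{R}))$; hence $g\in\mathcal{C}^{\mathcal{M}}(X\times\mathbb{R})$. Applying Theorem \ref{thm: C_M stability} with $n=1$ produces $H\in\mathcal{C}^{\mathcal{M}}(X)$, with no poles outside some $P'\supseteq P$, interpolating $\int_{\mathbb{R}}g\,\mathrm{d}y=\int_{0}^{+\infty}y^{s-1}f\,\mathrm{d}y$ on the integration locus. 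Setting $\mathcal{M}_{f}=H$ gives exactly the stability required by Definition \ref{def: gen param mellin transf}; thus this direction is essentially a restatement of Theorem \ref{thm: C_M stability}.

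For minimality, let $\mathcal{G}$ be any system of $\mathcal{A}$-algebras with $\mathcal{C}^{\mathbb{C}}\subseteq\mathcal{G}$ that is stable under the generalized parametric Mellin transform. Since each $\mathcal{G}(X)$ is an $\mathcal{A}(X)$-algebra already containing $\mathcal{C}^{\mathbb{C}}(X)$, and $\mathcal{C}^{\mathcal{M}}(X)$ is by definition the $\mathcal{A}(X)$-algebra generated by $\mathcal{C}^{\mathbb{C}}(X)\cup\mathcal{P}(\mathcal{S}_{+}(X))$, it suffices to prove that every parametric power $f^{s}$ with $f\in\mathcal{S}_{+}(X)$ lies in $\mathcal{G}(X)$. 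The key construction realizes $f^{s}$ as a Mellin transform: fix $f\in\mathcal{S}_{+}(X)$ and let $h=\chi_{B}\in\mathcal{C}^{\mathbb{C}}(X\times\mathbb{R})\subseteq\mathcal{G}(X\times\mathbb{R})$ be the characteristic function of the subanalytic set $B=\{(x,y):x\in X,\ 0<y<f(x)\}$. By stability, its generalized parametric Mellin transform $\mathcal{M}_{h}\in\mathcal{G}(X)$ satisfies, on the integration locus $\{\Re(s)>0\}\times X$,
\[
\mathcal{M}_{h}(s,x)=\int_{0}^{f(x)}y^{s-1}\,\mathrm{d}y=\frac{f(x)^{s}}{s}.
\]
Because $\mathcal{M}_{h}$ depends meromorphically on $s$, this identity of meromorphic functions holds on all of $\mathbb{C}$. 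The constant-in-$x$ function $(s,x)\mapsto s$ belongs to $\mathcal{E}\subseteq\mathcal{A}(X)$, so multiplying $\mathcal{M}_{h}$ by it (which cancels the pole at $s=0$) yields $f^{s}=s\cdot\mathcal{M}_{h}\in\mathcal{G}(X)$. Hence $\mathcal{P}(\mathcal{S}_{+}(X))\subseteq\mathcal{G}(X)$ and therefore $\mathcal{C}^{\mathcal{M}}(X)\subseteq\mathcal{G}(X)$ for every $X$.

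I expect the only genuine obstacle to be the stability direction, which rests entirely on Theorem \ref{thm: C_M stability}; once that theorem is invoked, everything else is formal. The remaining points are bookkeeping: verifying that $g$ and $h$ really belong to the relevant algebras (handling the restriction to $\{y>0\}$ and the characteristic functions), checking that the multiplier $s$ lies in $\mathcal{A}(X)$ so that multiplication by it is admissible inside an $\mathcal{A}(X)$-algebra, and invoking the identity theorem together with the built-in meromorphic $s$-dependence to upgrade the equality on the integration locus to a global identity of meromorphic functions.
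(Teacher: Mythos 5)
Your proposal is correct and follows essentially the same route as the paper: stability is deduced directly from Theorem \ref{thm: C_M stability}, and minimality comes from realizing $f^{s}$ as (a multiple by $s\in\mathcal{E}\subseteq\mathcal{A}(X)$ of) the generalized Mellin transform of the characteristic function of $\{(x,y):0<y<f(x)\}$. The only cosmetic difference is that the paper places the factor $s$ inside the integrand ($s\cdot\chi$) so that the transform is exactly $|f|^{s}$, whereas you multiply by $s$ afterwards; both are equivalent, and your extra care about writing $y^{s-1}\chi_{(0,+\infty)}$ as a legitimate element of $\mathcal{C}^{\mathcal{M}}(X\times\mathbb{R})$ is a welcome detail the paper leaves implicit.
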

\begin{proof}
By Theorem \ref{thm: C_M stability}, $\mathcal{C}^{\mathcal{M}}$
is such a system. Let us show that it is the smallest.

Let $f\in\mathcal{S}\left(X\right)$. Let $y$ be a single variable
and let $\chi\left(x,y\right)$ be the characteristic function of
the set $\left\{ \left(x,y\right):\ 0<y<\left|f\left(x\right)\right|\right\} $
and consider the parametric Mellin transform of the function $\left(s,x,y\right)\longmapsto f\left(s,x,y\right)=s\cdot\chi\left(x,y\right)$
on $\Sigma=\left\{ s\in\mathbb{C}:\ \Re\left(s\right)>0\right\} $:
\begin{align*}
\mathcal{M}_{\Sigma}\left[f\right]\left(s,x\right) & =\int_{0}^{+\infty}sy^{s-1}\chi\left(x,y\right)\text{d}y\\
 & =s\int_{0}^{\left|f\left(x\right)\right|}y^{s-1}\text{d}y=\left|f\left(x\right)\right|^{s}.
\end{align*}
If $\mathcal{D}$ is a system of $\mathcal{A}$-algebras containing
$\mathcal{\mathcal{C}}^{\mathbb{C}}$, then $\mathcal{D}$ contains
the function $f$, and if $\mathcal{D}$ is stable under the generalized
parametric Mellin transform, then $\mathcal{D}$ contains the extension
$\mathcal{M}_{f}$ of $\mathcal{M}_{\Sigma}\left[f\right]$ to the
whole complex plane. Hence $\mathcal{P}\left(\mathcal{S}_{+}\right)\subseteq\mathcal{D}$,
i.e. $\mathcal{C}^{\mathcal{M}}\subseteq\mathcal{D}$.
\end{proof}

\subsubsection{\label{subsubsec:Variants}Parametric powers of $\mathbb{K}$-power-subanalytic
functions}

We consider several collections, defined via minor variations of the
definition of $\mathcal{C}^{\mathcal{M}}$, and which we will prove
to be stable under parametric integration.

Let $\mathbb{K}\subseteq\mathbb{C}$ be a subfield.

In Definition \ref{def: parametric strong}, we replace $\mathcal{E}$
by
\[
\mathcal{E}_{\mathbb{K}}:=\left\{ \xi\in\mathcal{E}:\ P\left(\xi\right)\subseteq\mathbb{K}\right\} 
\]
(where $P\left(\xi\right)$ is the set of poles of $\xi$) and we
define $\mathcal{A}_{\mathbb{K}}$ accordingly.

We let $\mathcal{C}^{\mathbb{K},\mathcal{M}}\left(X\right)$ be the
$\mathcal{A}_{\mathbb{K}}\left(X\right)$-algebra generated by $\mathcal{C}^{\mathbb{K}}\left(X\right)\cup\mathcal{P}\left(\mathcal{S}_{+}\left(X\right)\right)$.
Every element of $\mathcal{C}^{\mathbb{K},\mathcal{M}}\left(X\right)$
can be written as a finite sum of generators of the form \eqref{eq:generator},
where now $\Phi\in\mathcal{A}_{\mathbb{K}}\left(X\right)$ and $g\in\mathcal{C}^{\mathbb{K}}\left(X\right)$.

\medskip{}

Next, we define a similar system of algebras which furthermore contains
the parametric powers of $\mathbb{K}$-powers of subanalytic functions.
For this, given $X\subseteq\mathbb{R}^{m}$ subanalytic, let
\[
\mathcal{P}\left(\mathcal{S}_{+}^{\mathbb{K}}\left(X\right)\right)=\left\{ P_{f}:\mathbb{C}\times X\longrightarrow\mathbb{C}\text{\ such\ that}\ P_{f}\left(s,x\right)=f\left(x\right)^{\alpha s},\ \text{for\ some\ }f\in\mathcal{S}_{+}\left(X\right)\text{ and }\alpha\in\mathbb{K}\right\} 
\]
and $\mathcal{C}^{\mathcal{P}\left(\mathbb{K}\right),\mathcal{M}}\left(X\right)$
be the $\mathcal{A}_{\mathbb{K}}\left(X\right)$-algebra generated
by $\mathcal{C}^{\mathbb{K}}\left(X\right)\cup\mathcal{P}\left(\mathcal{S}_{+}^{\mathbb{K}}\left(X\right)\right)$.
Every element of $\mathcal{C}^{\mathcal{P}\left(\mathbb{K}\right),\mathcal{M}}\left(X\right)$
can be written as a finite sum of generators of the form
\[
\Phi\left(s,x\right)g\left(x\right)f_{1}\left(x\right)^{\alpha_{1}s}\cdots f_{n}\left(x\right)^{\alpha_{n}s},
\]
where $\Phi\in\mathcal{A}_{\mathbb{K}}\left(X\right),\ g\in\mathcal{C}^{\mathbb{K}}\left(X\right),\ n\in\mathbb{N},\ \alpha_{i}\in\mathbb{K}$
and $f_{i}\in\mathcal{S}_{+}\left(X\right)$.

Let 
\begin{align*}
\mathcal{C}^{\mathbb{K},\mathcal{M}} & =\left\{ \mathcal{C}^{\mathbb{K},\mathcal{M}}\left(X\right):\ X\subseteq\mathbb{R}^{m}\text{ subanalytic},\ m\in\mathbb{N}\right\} ,\\
\mathcal{C}^{\mathcal{P}\left(\mathbb{K}\right),\mathcal{M}} & =\left\{ \mathcal{C}^{\mathcal{P}\left(\mathbb{K}\right),\mathcal{M}}\left(X\right):\ X\subseteq\mathbb{R}^{m}\text{ subanalytic},\ m\in\mathbb{N}\right\} .
\end{align*}

\begin{thm}
\label{thm:variants}The statement of Theorem \ref{thm: C_M stability}
also holds if we replace $\mathcal{C}^{\mathcal{M}}$ by either $\mathcal{C}^{\mathbb{K},\mathcal{M}}$
or $\mathcal{C}^{\mathcal{P}\left(\mathbb{K}\right),\mathcal{M}}$
(the closed discrete set $P'$ is now contained in $\mathbb{K}$).
\end{thm}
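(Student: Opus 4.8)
The plan is to re-run the proof of Theorem~\ref{thm: C_M stability} almost verbatim for each of the two variant systems, while performing two additional checks: a \emph{closure} check, that the integration procedure does not leave the subsystem, and a \emph{pole-location} check, that every newly created pole lies in $\mathbb{K}$. This is the natural route because $\mathcal{C}^{\mathbb{K},\mathcal{M}}$ and $\mathcal{C}^{\mathcal{P}(\mathbb{K}),\mathcal{M}}$ are simply the subsystems of $\mathcal{C}^{\mathcal{M}}$ obtained by restricting the coefficient field of the parametric strong functions to $\mathcal{E}_{\mathbb{K}}$, the constructible factors to $\mathcal{C}^{\mathbb{K}}$, and the exponents of the powers of subanalytic functions to $\mathbb{K}$; every analytic estimate, every convergence argument, and the non-compensation argument for purely imaginary powers used to determine the integration locus transfer unchanged.

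For the closure check I would first reduce by linearity to integrating a single generator of the form \eqref{eq:generator}, respectively $\Phi(s,x)\,g(x)\,f_1(x)^{\alpha_1 s}\cdots f_n(x)^{\alpha_n s}$, over one variable $y$, as in the main proof. The key point is that the subanalytic preparation theorem introduces only \emph{rational} exponents and subanalytic units; since $\mathbb{Q}\subseteq\mathbb{K}$, the $\mathbb{F}_{\mathbb{K}}$-algebra $\mathcal{C}^{\mathbb{K}}(X)$ is stable under the operations used in preparation and plays for the variants exactly the role that $\mathcal{C}^{\mathbb{C}}(X)$ plays in the main proof. After centring and preparing with respect to $y$ one has $f=y^{d}U$ with $d\in\mathbb{Q}$ and $U$ a strong unit, whence $f^{s}=y^{ds}U^{s}$ with $U^{s}=\sum_{k}\binom{s}{k}(U-1)^{k}$ a parametric strong function having \emph{no} poles; integrating the resulting monomial over the relevant interval leaves an evaluation at a subanalytic endpoint $\theta(x)$ of the form $\theta^{ds+\beta+k}$. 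For $\mathcal{C}^{\mathbb{K},\mathcal{M}}$ the coefficient of $s$ is $d\in\mathbb{Q}$, so $\theta^{ds}=(\theta^{d})^{s}\in\mathcal{P}(\mathcal{S}_+)$ because $\theta^{d}$ is again subanalytic; for $\mathcal{C}^{\mathcal{P}(\mathbb{K}),\mathcal{M}}$ the coefficient is $a=\sum_{i}\alpha_i d_i\in\mathbb{K}$, so $\theta^{as}\in\mathcal{P}(\mathcal{S}_+^{\mathbb{K}})$, while in both cases $\theta^{\beta+k}$ and the $\log\theta$ factors produced belong to $\mathcal{C}^{\mathbb{K}}$. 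This is precisely why the two variants are generated, respectively, over $\mathcal{P}(\mathcal{S}_+)$ and over $\mathcal{P}(\mathcal{S}_+^{\mathbb{K}})$, and it shows that the output is again a finite sum of generators of the correct variant.

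For the pole-location check I would isolate the only mechanism creating new poles: integrating a prepared term $y^{as+\beta-1}(\log y)^{j}$ (times a strongly convergent unit expanded as a series) over a bounded interval or a half-line produces, term by term, denominators $(as+\beta+k)^{j+1}$ with $k\in\mathbb{N}$, hence poles at $s=-(\beta+k)/a$. Here $a\in\mathbb{K}$ (equal to $d\in\mathbb{Q}$ in the first variant, to $\sum_i\alpha_i d_i$ in the second) and $\beta\in\mathbb{K}$ comes from the $\mathbb{K}$-powers in $g$, so, $\mathbb{K}$ being a field, $-(\beta+k)/a\in\mathbb{K}$. The resulting series $\sum_{k}c_k(s)\,\theta(x)^{as+\beta+k}$ is recognized, exactly as in the proof of Theorem~\ref{thm: C_M stability}, as a parametric strong function in the sense of Definition~\ref{def: parametric strong}; its pole set is contained in $\{-(\beta+k)/a:\ k\in\mathbb{N}\}\subseteq\mathbb{K}$, so its coefficients lie in $\mathcal{E}_{\mathbb{K}}$ and it belongs to $\mathcal{A}_{\mathbb{K}}(X)$. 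The poles already carried by $\Phi$ lie in $P\subseteq\mathbb{K}$ by the hypothesis on the variant class, so altogether $P'\subseteq\mathbb{K}$.

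The passage from one variable to integration over $\mathbb{R}^{n}$ is then handled by the same cell-decomposition and induction on $n$ as in the main proof, each inductive step integrating a generator of the variant class and, by the two checks above, returning a finite sum of such generators with poles in $\mathbb{K}$. I expect the only real difficulty to be the bookkeeping in the pole-location check: one must verify that, uniformly over the finitely many cells and across the (possibly infinite) families of denominators $(as+\beta+k)$, the series produced is strongly convergent in the sense of Definition~\ref{def: parametric strong} with coefficients in $\mathcal{E}_{\mathbb{K}}$, so that the output truly lands in $\mathcal{A}_{\mathbb{K}}$ and not merely in $\mathcal{A}$. Everything else is a transcription of the proof of Theorem~\ref{thm: C_M stability}, with $\mathbb{C}$ replaced by $\mathbb{K}$ and with the single arithmetic observation $\mathbb{Q}\subseteq\mathbb{K}$.
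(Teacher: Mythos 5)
Your plan is correct and coincides with what the paper actually does: the preparation and single-variable integration machinery of Sections 4--5 is developed uniformly for $\mathcal{D}\in\{\mathcal{C}^{\mathbb{K},\mathcal{M}},\mathcal{C}^{\mathcal{P}(\mathbb{K}),\mathcal{M}}\}$, with exactly your two checks (closure of the class under preparation and term-by-term integration, and new poles of the form $\sigma=(\nu_{0}-\eta-d)/\ell\in\mathbb{K}$ because $\ell,\eta\in\mathbb{K}$ and $\mathbb{Q}\subseteq\mathbb{K}$), followed by the same Fubini induction on the number of integrated variables. The only difference is presentational: the paper proves everything for general $\mathbb{K}$ from the outset, so Theorem~\ref{thm: C_M stability} is obtained as the specialization $\mathbb{K}=\mathbb{C}$ rather than the variants being obtained by adapting its proof.
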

Arguing as in Corollary \ref{cor: C^M smallest system}, it follows
that $\mathcal{C}^{\mathbb{K},\mathcal{M}}$ is the smallest system
of $\mathcal{A}_{\mathbb{K}}$-algebras containing $\mathcal{C}^{\mathbb{K}}$
and stable under parametric Mellin transform. Notice that the collection
$\mathcal{C}^{\mathbb{K}}$ of $\mathbb{K}$-power-constructible functions
coincides with the collection of all functions in $\mathcal{C}^{\mathbb{K},\mathcal{M}}$
which happen not to depend on the parameter $s$.

The system of $\mathcal{A}_{\mathbb{K}}$-algebras $\mathcal{C}^{\mathcal{P}\left(\mathbb{K}\right),\mathcal{M}}$
also contains $\mathcal{C}^{\mathbb{K}}$ and is stable under parametric
Mellin transform. As a consequence of the proof of Theorem \ref{thm:variants},
we show (see Theorem \ref{thm: interpolation and locus C^M} and Remark
\ref{rem: non vertical grid}) that the system $\mathcal{C}^{\mathcal{P}\left(\mathbb{C}\right),\mathcal{M}}$
is strictly larger than $\mathcal{C}^{\mathcal{M}}$.

\section{Toolbox\label{sec:Notation-and-general strategy}}

Throughout this paper, $X\subseteq\mathbb{R}^{m}$ is a subanalytic
set which serves as space of parameters (we never integrate with respect
to the variables ranging in $X$). Since all the classes $\mathcal{D}$
of functions defined in Sections \ref{subsec:power-constructible}
and \ref{subsec:Parametric powers} are stable under multiplication
by a subanalytic function, when studying a function $f\in\mathcal{D}\left(X\times\mathbb{R}^{n}\right)$,
we are allowed to partition $X$ into subanalytic cells, replace $X$
by one of the cells of the partition and work disjointly in restriction
to such a cell. In particular, we may always assume that $X$ is itself
a subanalytic cell, and that all cells in $X\times\mathbb{R}^{n}$
project onto $X$.

If $\mathcal{D}$ is any of the classes defined in Sections \ref{subsec:power-constructible}
and $f\in\mathcal{D}\left(X\times\mathbb{R}^{n}\right)$, we often
compute the integral of $f$ with respect to the variables ranging
in $\mathbb{R}^{n}$. If $X\times\mathbb{R}^{n}$ is partitioned into
finitely many subanalytic cells, then only the cells which have nonempty
interior in $X\times\mathbb{R}^{n}$ contribute to the integral. This
motivates the following definition.
\begin{defn}
\label{def:cellopen over X}Let $A\subseteq X\times\mathbb{R}$ be
a subanalytic cell. We say that $A$ is \emph{open over $X$ }if there
are $\varphi_{1},\varphi_{2}\in\mathcal{S}\left(X\right)\cup\left\{ \pm\infty\right\} $
such that for all $x\in X,\ \varphi_{1}\left(x\right)<\varphi_{2}\left(x\right)$
and
\[
A=\left\{ \left(x,y\right):\ x\in X,\ \varphi_{1}\left(x\right)<y<\varphi_{2}\left(x\right)\right\} .
\]
\end{defn}
\begin{notation}
For $x\in X$, define $A_{x}=\left\{ y\in\mathbb{R}^{n}:\ \left(x,y\right)\in A\right\} $. 

Hence, if $f\in\mathcal{D}\left(A\right)$, then 
\[
\text{Int}\left(f;\mathbb{C}\times X\right)=\left\{ \left(s,x\right)\in\mathbb{C}\times X:\ y\longmapsto f\left(s,x,y\right)\in L^{1}\left(A_{x}\right)\right\} .
\]

Given a set $A$, we denote by $\chi_{A}$ the characteristic function
of $A$.
\end{notation}

\subsection{Non-compensation arguments\label{subsec:Non-compensation-arguments}}

In this section we prove a result (Proposition \ref{prop: non compensation powers}
below) which is a crucial ingredient of the proof of the Stability
Theorems \ref{thm Stability of C^K} and \ref{thm: C_M stability},
and of the study the asymptotics of the functions of our classes.
The statement of Proposition \ref{prop: non compensation powers}
is stronger than the result that we actually need here, since it involves
both purely imaginary powers and purely imaginary exponentials. However,
the full generality of this result will be used in a forthcoming paper,
in which we will study the Fourier transforms of power-constructible
functions.

We first recall the definition of continuously uniformly distributed
modulo 1 functions, which is a key ingredient in Proposition \ref{prop: non compensation powers}
(for the properties and uses of this notion, see \cite{kuipers_niederreiter:uniform_distributions_sequences}).
In what follows, $\mathrm{vol}_{i}$ stands for the Lebesgue measure
in $\mathbb{R}^{i}$, $i\ge1$.
\begin{defn}
\label{def CUD map} Let $\{x\}:=x-\lfloor x\rfloor$ be the fractional
part of the real number $x$ and let $F=\left(f_{1},\ldots,f_{\ell}\right)\colon[0,+\infty)\to\mathbb{R}^{\ell}$
be any map. If $I_{1},\ldots,I_{\ell}\subseteq\mathbb{R}$ are bounded
intervals with nonempty interior, we denote by $I$ the box $\prod_{j=1}^{\ell}I_{j}$.
For $T\ge0$, let %
\[
W_{F,I,T}:=\left\{ t\in[0,T]:\{F\left(t\right)\}\in I\right\} ,
\]
where $\{F(t)\}$ denotes the tuple $\left(\{f_{1}(t)\},\ldots,\{f_{\ell}\left(t\right)\}\right)$.

The map $F$ is said to be \emph{continuously uniformly distributed
modulo $1$}, in short \textbf{\emph{ }}\emph{c.u.d. mod $1$}, if
for every box $I\subseteq[0,1)^{\ell}$, 
\[
\lim_{T\to+\infty}\frac{\mathrm{vol}_{1}\left(W_{F,I,T}\right)}{T}=\mathrm{vol}_{\ell}\left(I\right).
\]
\end{defn}
We use the c.u.d. mod 1 property in the proof of the following proposition.
There, we deal with a family of complex exponential functions having
as phases the functions in the family $(\sigma_{j}\log(y)+p_{j}(y))_{j\in\{1,\ldots,n\}}$.
It turns out that in general we cannot extract from this family a
c.u.d. mod 1 subfamily, since $\log$ is not a c.u.d. mod 1 function
(although the family $\sigma_{j}\log y+p_{j}(y)$ is, whenever $p_{j}$
is not constant). To overcome this technical difficulty, we compose
$\sigma_{j}\log y+p_{j}(y)$ with the change of variables $y=\mathrm{e}^{t}$,
after which we are able to extract a c.u.d. mod 1 subfamily from the
family of phases $(\sigma_{j}t+p_{j}(\mathrm{e}^{t}))_{j\in\{1,\ldots,n\}}$. 
\begin{prop}
\label{prop: non compensation powers}Let $r\geq-1,\ b\geq1,\ \nu\in\mathbb{N},\ n\in\mathbb{N}\setminus\left\{ 0\right\} ,\ c_{1},\ldots,c_{n}\in\mathbb{C}$,
$\sigma_{0},\ldots\sigma_{n}\in\mathbb{R}$ and $p_{1},\ldots,p_{n}\in\mathbb{R}[X]$
be such that $p_{j}(0)=0$ for $j=1,\ldots,n$. Suppose that $\sigma_{j}\log(y)+p_{j}(y)\not=\sigma_{k}\log(y)+p_{k}(y)$
for $j\not=k$, and let 
\[
f\left(y\right)=y^{r}\left(\log y\right)^{\nu}\sum_{j=1}^{n}c_{j}y^{\mathrm{i}\sigma_{j}}\mathrm{e}^{\mathrm{i}p_{j}(y)}.
\]
The following statements hold.
\begin{enumerate}
\item \label{item:cud1} If $f\in L^{1}\left(\left(b,+\infty\right)\right)$
then $c_{j}=0$ for all $j=1,\ldots,n$. 
\item \label{item:cud2} Let $E(y)=\sum_{j=1}^{n}c_{j}y^{\mathrm{i}\sigma_{j}}\mathrm{e}^{\mathrm{i}p_{j}(y)}$,
where for at least one $j\in\{1,\ldots,n\}$ we have $c_{j}\not=0$
and $\sigma_{j}\log(y)+p_{j}(y)\not=0$. There exist $\varepsilon>0$
and a sequence of real numbers $(y_{m})_{m\in\mathbb{N}}$ which tends
to $+\infty$, such that for all $m\ge0$, $\vert E(y_{m})\vert\ge\varepsilon$.
\item \label{item:cud3} There exist $\delta>0$ and two sequences of real
numbers $(y_{1,m})_{m\in\mathbb{N}}$, $(y_{2,m})_{m\in\mathbb{N}}$
which both tend to $+\infty$, such that for all $m\ge0$, $\vert E(y_{1,m})-E(y_{2,m})\vert\ge\delta$. 
\end{enumerate}
\end{prop}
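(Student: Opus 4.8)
The plan is to use the c.u.d. mod 1 property after the substitution $y = \mathrm{e}^{t}$, as flagged in the discussion preceding the statement. First I would set $t = \log y$, so that each phase becomes $\sigma_{j} t + p_{j}(\mathrm{e}^{t})$, and write $E(\mathrm{e}^{t}) = \sum_{j=1}^{n} c_{j}\, \mathrm{e}^{\mathrm{i}(\sigma_{j} t + p_{j}(\mathrm{e}^{t}))}$. The key preliminary step is to extract from the family $(\sigma_{j} t + p_{j}(\mathrm{e}^{t}))_{j}$ a maximal subfamily that is c.u.d. mod 1 as a vector-valued map of $t$; the hypothesis $\sigma_{j}\log y + p_{j}(y) \neq \sigma_{k}\log y + p_{k}(y)$ for $j \neq k$ guarantees that the differences of phases are nonconstant (either a genuine polynomial in $\mathrm{e}^{t}$ of positive degree appears, or the $\sigma_{j}$ differ), which is exactly what is needed to invoke Weyl-type criteria for continuous uniform distribution. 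I expect that after grouping terms whose phases differ by a \emph{constant} (these can be combined into a single coefficient), I obtain a tuple of phases whose pairwise differences are nonconstant, and hence the joint map is c.u.d. mod 1.

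For part \eqref{item:cud2}, once I have a c.u.d. mod 1 tuple of phases, the image of $t \mapsto \{(\text{phases})\}$ equidistributes over the torus $[0,1)^{\ell}$. The function $\sum_{j} c_{j}\, \mathrm{e}^{2\pi\mathrm{i}\theta_{j}}$ on the torus is not identically zero (since at least one $c_{j} \neq 0$ and the characters $\mathrm{e}^{2\pi\mathrm{i}\theta_{j}}$ are linearly independent), so it exceeds some $\varepsilon > 0$ on a box $I$ of positive measure. Equidistribution forces $W_{F,I,T}$ to have density $\mathrm{vol}_{\ell}(I) > 0$, so the set of $t$ with $\{F(t)\} \in I$ is unbounded; choosing $y_{m} = \mathrm{e}^{t_{m}}$ along such $t_{m} \to +\infty$ yields $|E(y_{m})| \geq \varepsilon$. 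Part \eqref{item:cud3} is analogous but two-sided: I would pick two disjoint boxes $I_{1}, I_{2}$ on the torus on which the trigonometric polynomial takes values differing by at least $2\delta$ (possible because a nonconstant character sum varies), then use equidistribution to produce unbounded sequences $t_{1,m}, t_{2,m}$ landing in $I_{1}, I_{2}$ respectively, giving $|E(y_{1,m}) - E(y_{2,m})| \geq \delta$.

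Finally, part \eqref{item:cud1} should follow from \eqref{item:cud2} by a standard integrability argument. If all $c_{j}$ were not zero, then by \eqref{item:cud2} (applied after discarding any terms with $\sigma_{j}\log y + p_{j}(y) \equiv 0$, which contribute a constant that can only help) the factor $E(y)$ stays bounded below in modulus by $\varepsilon$ along a sequence $y_{m} \to +\infty$; combined with $|y^{r}(\log y)^{\nu}|$, whose growth with $r \geq -1$ is not integrable near $+\infty$, I would derive that $f \notin L^{1}((b,+\infty))$, a contradiction. The delicate point is that pointwise lower bounds along a sequence are not enough for non-integrability, so I would instead establish a lower bound for $|E(y)|$ on a \emph{positive-density} set of $t$ (which the c.u.d. mod 1 property supplies directly, via the positive measure of the box $I$), and estimate $\int |f|$ from below over the corresponding union of intervals in $y$.

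\textbf{Main obstacle.} The hard part will be the reduction to a genuinely c.u.d. mod 1 tuple: one must correctly handle phases that coincide up to an additive constant (merging coefficients) and verify that the surviving pairwise differences are nonconstant functions of $t$, so that the vector map of phases — not merely each coordinate — equidistributes on the full torus. Showing joint (multidimensional) continuous uniform distribution, rather than one-dimensional, and checking the Weyl criterion for $\sigma_{j} t + p_{j}(\mathrm{e}^{t})$ when the polynomial parts $p_{j}$ interact with the linear terms $\sigma_{j} t$, is where the real technical work lies.
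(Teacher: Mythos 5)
Your overall strategy is the paper's: substitute $y=\mathrm{e}^{t}$, establish joint continuous uniform distribution mod $1$ of the phases via a Weyl-type criterion, get a lower bound $\vert\varphi(t)\vert\ge\varepsilon$ on a set of positive density, and deduce non-integrability from the infinite measure of that set (you correctly flag that a lower bound along a mere sequence would not suffice). Parts \eqref{item:cud2} and \eqref{item:cud3} are handled exactly as in the paper.

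However, there is a genuine gap at the step you yourself identify as the crux. You claim that after merging phases differing by a constant, a tuple of phases with pairwise nonconstant differences is jointly c.u.d.\ mod $1$. This is false: the pair $(t,2t)$ has nonconstant difference and each coordinate is c.u.d.\ mod $1$, but the joint map is not, since its image lies in the subtorus $\{2\theta_{1}-\theta_{2}\in\mathbb{Z}\}$. Joint equidistribution on the full torus requires that \emph{every} nontrivial integer combination $\langle h,F\rangle$ equidistribute, not just the pairwise differences. The paper's remedy is to choose a $\mathbb{Q}$-basis $f_{1},\dots,f_{\ell}$ of the $\mathbb{Q}$-span of the phases $f_{j}(t)=\sigma_{j}t+p_{j}(\mathrm{e}^{t})$, rescale to $\tilde{f}_{j}=f_{j}/(2\pi\rho_{j})$, and rewrite the whole sum as $\varphi(t)=P(\mathrm{e}^{2\pi\mathrm{i}\tilde{f}_{1}(t)},\dots,\mathrm{e}^{2\pi\mathrm{i}\tilde{f}_{\ell}(t)})$ for a Laurent polynomial $P$; then $\mathbb{Q}$-linear independence guarantees that each $\langle h,F\rangle$ is a nonconstant function of the form $\sigma t+p(\mathrm{e}^{t})$, whose oscillatory integral is bounded by a change of variables and the second mean value theorem. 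Your ``maximal c.u.d.\ subfamily'' also leaves unexplained how the terms whose phases are $\mathbb{Q}$-linearly \emph{dependent} on the chosen ones re-enter the argument: in the paper they become extra monomials of $P$, and one must then check separately that the monomials of $P$ cannot cancel (this uses $f_{j}\neq f_{k}$, so that distinct terms give distinct exponent vectors) and that $P$ is nonconstant on the torus. Without this Laurent-polynomial device, your lower bound $\vert\varphi\vert\ge\varepsilon$ on a box of the torus is not justified. (A minor point: under the hypothesis $p_{j}(0)=0$, two phases differing by a constant must actually coincide, so your merging step is vacuous.)
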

\begin{proof}
We may assume that at least one of the functions $g_{j}\left(y\right)=\sigma_{j}\log(y)+p_{j}(y)$
is not constant. Indeed, since $p_{j}(0)=0$, if $g_{j}$ is constant
then it is zero, hence in this case $n=1$ and $f$ is not integrable
unless $c_{1}=0$. Therefore we may assume without loss of generality
that $g_{1}$ is not constant, and that $c_{1}\not=0$.

If $f\in L^{1}\left(\left(b,+\infty\right)\right)$, the following
integral is finite for all $x$ such that $e^{x}\ge b$: 
\[
I(x):=\int_{b}^{\mathrm{e}^{x}}\left|f(y)\right|\ dy.
\]
Performing the change of variables $t=\log(y)$ we obtain 
\[
I(x)={\displaystyle \int_{\log(b)}^{x}t^{\nu}\mathrm{e}^{t(r+1)}\left|\sum_{j=1}^{n}c_{j}\mathrm{e}^{\mathrm{i}\sigma_{j}t}\mathrm{e}^{\mathrm{i}p_{j}(\mathrm{e}^{t})}\right|\ \mathrm{d}t.}
\]
Set $\varphi(t)=\sum_{j=1}^{n}c_{j}\mathrm{e}^{\mathrm{i}\sigma_{j}t}\mathrm{e}^{\mathrm{i}p_{j}(\mathrm{e}^{t})}$
and $f_{j}(t)=\sigma_{j}t+p_{j}(\mathrm{e}^{t})$ for $j=1,\ldots,n$.
Assume that $f_{1},\ldots,f_{\ell}$, for $\ell\le n$, is a basis
over $\mathbb{Q}$ of the $\mathbb{Q}$-vector space generated by
$f_{1},\ldots,f_{n}$. We write 
\[
f_{k}=r_{k,1}f_{1}+\cdots+r_{k,\ell}f_{\ell},\text{ for }k=\ell+1,\ldots,n,
\]
we denote by $\rho_{j}$ the least common multiple of the denominators
of $r_{\ell+1,j},\ldots,r_{n,j}$, and we set $\tilde{f_{j}}:=f_{j}/2\pi\rho_{j}$,
for $j=1,\ldots,\ell$ (note that this family is still $\mathbb{Q}$-linearly
independent). Then 
\[
f_{k}=2\pi m_{k,1}\tilde{f_{1}}+\cdots+2\pi m_{k,\ell}\tilde{f_{\ell}},\text{ for }k=\ell+1,\ldots,n,
\]
for some $m_{k,1},\ldots,m_{k,\ell}\in\mathbb{Z}$, and 
\[
\varphi(t)=P(\mathrm{e}^{2\pi\mathrm{i}\tilde{f_{1}}(t)},\ldots,\mathrm{e}^{2\pi\mathrm{i}\tilde{f_{\ell}}(t)}),
\]
where $P\in\mathbb{C}\left[X_{1},\ldots,X_{\ell},X_{1}^{-1},\ldots,X_{\ell}^{-1}\right]$
is a Laurent polynomial.

Note that $P$ contains at least the monomial $c_{1}X_{1}$ (we can
always choose $f_{1}$ as an element of our basis, since $c_{1}\not=0$
and $g_{1}\not=0$). Moreover since by hypothesis $f_{j}(t)\not=f_{k}(t)$,
(as functions) for $j\not=k$, the monomials of $P$ cannot cancel
out. It follows that $P$ is not constant, and therefore the algebraic
set $V:=\{P=0\}$ does not contain the torus $\mathbb{T}:=(S^{1})^{\ell}$.
By continuity of $P$, we can find a real number $\varepsilon>0$
and intervals $A_{j}^{\varepsilon}\subset[0,1)$, $j=1,\ldots,\ell$,
such that $\vert\varphi(t)\vert\ge\varepsilon$ on the set 
\[
W_{\varepsilon}=\left\{ t\ge\log(b):\ \left\{ \tilde{f_{j}}(t)\right\} \in A_{j}^{\varepsilon},\ j=1\ldots,\ell\right\} .
\]

We claim that the map $F=\left(\tilde{f_{1}},\ldots,\tilde{f_{\ell}}\right)$
is c.u.d. mod 1 (which implies in particular that $W_{\varepsilon}$
is nonempty). For this, we use the criterion \cite[Theorem 9.9]{kuipers_niederreiter:uniform_distributions_sequences},
i.e. we show that for any $h\in\mathbb{Z}^{\ell}$ such that $h\not=0$,
\[
\lim_{T\to+\infty}\frac{1}{T}\int_{1}^{T}\mathrm{e}^{2\pi\mathrm{i}\langle h,F(t)\rangle}\ \mathrm{d}t=0.
\]
We prove in fact that there exists $T_{0}\geq1$ such that 
\[
J(T)=\int_{T_{0}}^{T}\mathrm{e}^{2\pi\mathrm{i}\langle h,F(t)\rangle}\ \mathrm{d}t
\]
is bounded from above. For $h\in\mathbb{Z}^{\ell}$ such that $h\not=0$,
we can write $\langle h,F(t)\rangle=\sigma t+p(\mathrm{e}^{t})$,
with $\sigma\in\mathbb{R}$ and $p\in X\mathbb{R}[X]$. Since the
components of $F$ are $\mathbb{Q}$-linearly independent, $\sigma t+p(\mathrm{e}^{t})$
is not identically zero (equivalently, not constant). We can assume
that $p\not=0$, since if not, then $J(T)$ is clearly bounded, and
we are done. Let us write 
\[
\rho(t)=\frac{\langle h,F(t)\rangle}{a_{d}}=\mathrm{e}^{dt}+\frac{a_{d-1}}{a_{d}}\mathrm{e}^{(d-1)t}+\ldots+\frac{\sigma}{a_{d}}t,
\]
for some $d\ge1$, $a_{i}\in\mathbb{R}$ and $a_{d}\in\mathbb{R}\setminus\{0\}$.
Fix $T_{0}$ sufficiently large so that $t\mapsto\rho$ and $t\mapsto\rho'(t)$
are strictly increasing (to $+\infty$) on $[T_{0},+\infty)$, and
perform the change of variables $u=\rho(t)$ in $J(T)$ to obtain
\[
J(T)=\int_{T_{0}}^{T}\mathrm{e}^{2\pi\mathrm{i}a_{d}\rho(t)}\ \mathrm{d}t=\int_{\rho(T_{0})}^{\rho(T)}\dfrac{\mathrm{e}^{2\pi\mathrm{i}a_{d}u}}{\rho'(\rho^{-1}(u))}\ \mathrm{d}u.
\]
By the second mean value theorem for integrals applied to the real
part of $J(T)$, we have 
\[
\Re(J(T))=\dfrac{1}{{\rho'(T_{0})}}\int_{\rho(T_{0})}^{\tau}\cos(2\pi a_{d}u)\ \mathrm{d}u,
\]
for some $\tau\in(\rho(T_{0}),\rho(T)]$. This shows that the real
part of $J(T)$ is bounded from above, and so is the imaginary part
of $J(T)$ by the same computation.

Therefore $F$ is c.u.d. mod 1 and hence, by definition, the set $W_{\varepsilon}$
has infinite measure. Since 
\[
I(x)\ge\varepsilon\int_{[\log(b),x]\cap W_{\varepsilon}}t^{\nu}\mathrm{e}^{(r+1)t}\ \mathrm{d}t
\]
and $\nu\geq0$, $r\geq-1$, this implies that 
\[
\int_{b}^{+\infty}f\left(y\right)\text{d}y=\lim_{x\longrightarrow+\infty}I(x)=+\infty,
\]
and proves \eqref{item:cud1}.

To prove \eqref{item:cud2} and \eqref{item:cud3} we may still assume
that $c_{1}\not=0$ and $g_{1}\not=0$, by our hypothesis on $E$.
In this situation, since we have shown that $W_{\varepsilon}$ has
infinite measure, one can find a sequence $(t_{m})_{m\in\mathbb{N}}$
which tends to $+\infty$, such that for all $m\ge0$, $t_{m}\in W_{\varepsilon}$,
and therefore $\vert\varphi(t_{m})\vert\ge\varepsilon$. We set, for
all $m\in\mathbb{N}$, $y_{m}=\text{e}^{t_{m}}$, and we obtain $y_{m}\to+\infty$
and $E(y_{m})=\varphi(t_{m})$, which proves \eqref{item:cud2}.

We proceed in the same way to prove \eqref{item:cud3}. Since $P$
is not constant on $\mathbb{T}$, by continuity of $P$ one can find
$\delta>0$ and intervals $A_{j}^{\delta},B_{j}^{\delta}\subset[0,1)$,
$j=1,\ldots,\ell$, such that $\vert\varphi(t)-\varphi(t')\vert\ge\delta$
for any $t,t'$ such that $t\in A^{\delta}:=\{u\in\mathbb{R},\{\tilde{f}_{j}(u)\}\in A_{j}^{\delta},j=1,\ldots,\ell\}$
and $t'\in B^{\delta}:=\{u\in\mathbb{R},\{\tilde{f}_{j}(u)\}\in B_{j}^{\delta},j=1,\ldots,\ell\}$.
But since $F$ is c.u.d. mod 1, one can find two sequences $(t_{1,m})_{m\in\mathbb{N}}$
and $(t_{2,m})_{m\in\mathbb{N}}$ tending to $+\infty$, such that
for all $m\in\mathbb{N}$, $t_{1,m}\in A^{\delta}$ and $t_{2,m}\in B^{\delta}$.
Finally, we set $y_{1,m}=\text{e}^{t_{1,m}}$ and $y_{2,m}=\text{e}^{t_{2,m}}$
to obtain \eqref{item:cud3}. 
\end{proof}

\subsection{Properties of parametric strong functions\label{subsec:Results-about-parametric}}

In this section we give some examples of parametric strong functions
and list their properties. The results in this section are stated
for $\mathcal{E}$ and $\mathcal{A}$ for simplicity, but they also
hold for $\mathcal{E}_{\mathbb{K}}$ and $\mathcal{A}_{\mathbb{K}}$. 
\begin{examples}
\label{ex: unit^s}All (finite sum of finite products) of the following
functions are parametric strong functions (i.e. they belong to $\mathcal{A}$).
\begin{itemize}
\item Any subanalytic strong function (as in Definition \ref{def: classical strong}),
clearly.
\item $\left(U\left(x\right)\right)^{s}$, where $U\in\mathcal{S}\left(X\right)$
is a subanalytic strong unit of the form $U\left(x\right)=1+F\circ\psi\left(x\right)$,
with $\psi\in\mathcal{S}_{c}\left(X\right)$, $F\in\mathbb{R}\left\llbracket Z\right\rrbracket $
and $\sup_{z\in D^{N}}\left|F\left(z\right)\right|<1$ (where $D^{N}$
is the closed polydisk in $\mathbb{R}^{N}$ of radius $\frac{3}{2}$).
To see this, notice that the series $\tilde{F}=\left(1+F\left(Z\right)\right)^{s}=\sum{s \choose i}\left(F\left(Z\right)\right)^{i}\in\mathcal{E}\left\llbracket Z\right\rrbracket $
is strongly convergent (without poles) and $\left(U\left(x\right)\right)^{s}=\tilde{F}\circ\left(s,\psi\left(x\right)\right)$.
\item Let $B=\left\{ \left(x,y\right)\in\left(2,+\infty\right)\times\mathbb{R}:\ y>x\right\} $
and $\Phi\left(s,x,y\right)={\displaystyle \sum_{i\geq2}}\xi_{i}\left(s\right)\left(\frac{x}{y}\right)^{i}\in\mathcal{A}\left(B\right)$.
Then $\varphi\left(s,x\right):={\displaystyle \int_{x^{2}}^{+\infty}}\Phi\left(s,x,y\right)\text{d}y\in\mathcal{A}\left(\left(2,+\infty\right)\right)$.
To see this, integrate term by term (which is possible, since the
series $\Sigma_{i}\xi_{i}\left(s\right)Z^{i}$ is strongly convergent)
and find that $\varphi\left(s,x\right)=\sum_{i\geq0}\frac{\xi_{i+2}\left(s\right)}{i+1}x^{-i}$,
which is again a strongly convergent series with coefficients in $\mathcal{E}$,
composed with the $1$-bounded subanalytic function $x^{-1}\in\mathcal{S}\left(\left(2,+\infty\right)\right)$.
\end{itemize}
\end{examples}
\begin{rem}
\label{rem: analyticity in x and summability}$\ $
\begin{itemize}
\item If $\Phi\in\mathcal{A}\left(X\right)$ has no poles outside $P$,
then clearly for every fixed $s\in\left(\mathbb{C}\setminus P\right)$,
$x\longmapsto\Phi\left(s,x\right)$ is a complex-valued subanalytic
strong function (in the sense of Definition \eqref{def: classical strong}).
In particular, up to decomposing $X$ into subanalytic cells, we may
suppose that $\Phi$ depends analytically on $x$.
\item If $\Phi\left(s,x\right)=F\circ\left(s,\psi\left(x\right)\right)\in\mathcal{A}\left(X\right)$
is a parametric strong function then 
\[
\left\{ \xi_{I}\left(s\right)\psi\left(x\right)^{I}:\ I\in\mathbb{N}^{N}\right\} 
\]
 is a\emph{ normally summable} family of functions: the family $\left\{ \sup_{x\in X}\left|\xi_{I}\left(s\right)\psi\left(x\right)^{I}\right|\right\} \subseteq\left[0,1\right]$
is summable. In particular, if $\tilde{F}$ is obtained from $F$
by taking the sum only over some subset of the support of $F$ and
rearranging the terms, then $\tilde{F}\circ\left(s,\psi\left(x\right)\right)$
is a parametric strong function (without poles outside the set $P\left(F\right)$). 
\end{itemize}
\end{rem}
\begin{rem}
\label{rem: nested present}Let $\left(Z,Y\right)$ be an $\left(N+M\right)$-tuple
of variables and $F\left(s;Z,Y\right)=\sum_{I,J}\xi_{I,J}\left(s\right)Z^{I}Y^{J}\in\mathcal{E}\left\llbracket Z,Y\right\rrbracket $
be a strongly convergent series. Then, for all $J\in\mathbb{N}^{M}$,
the series $F_{J}:=\sum_{I}\xi_{I,J}\left(s\right)Z^{I}\in\mathcal{E}\left\llbracket Z\right\rrbracket $
is strongly convergent. Moreover, for every $1$-bounded subanalytic
map $c:X\longrightarrow\mathbb{R}^{N}$, we have $\xi_{J}^{c}\left(s,x\right):=F_{J}\circ\left(s,c\left(x\right)\right)\in\mathcal{A}\left(X\right)$.
Furthermore the series $F_{c}:=\sum_{J}\xi_{J}^{c}\left(s,x\right)Y^{J}\in\mathcal{A}\left(X\right)\left\llbracket Y\right\rrbracket $
is \emph{strongly convergent}, in the sense that the family $\left\{ \xi_{J,x}\left(s\right):=\xi_{J}^{c}\left(s,x\right)\right\} _{J,x}\subseteq\mathcal{E}$
has a non-accumulating set of common poles with bounded order and
the series $\sum_{J}\xi_{J}^{c}\left(s,x\right)Y^{J}$ defines a function
on $\left(\mathbb{C}\setminus P\right)\times X\times D^{M}$ which
is meromorphic in $s$ and analytic in $\left(x,Y\right)$.

It follows that, for every $1$-bounded map $\gamma:X\longrightarrow\mathbb{R}^{M}$,
the parametric strong function $\Phi\left(s,x\right):=F\circ\left(s,c\left(x\right),\gamma\left(x\right)\right)$
can also be written as the strongly convergent power series $F_{c}$
(with suitable parametric strong functions as coefficients), evaluated
at $Y=\gamma\left(x\right)$. We call $F_{c}\circ\left(s,\gamma\left(x\right)\right)=\sum_{J}\xi_{J}^{c}\left(s,x\right)\left(\gamma\left(x\right)\right)^{J}$
a \emph{nested presentation} of $\Phi$.
\end{rem}
We will often apply the above to the following situation: let $B\subseteq\mathbb{R}^{m+1}$
a subanalytic set such that the projection onto the first $m$ coordinates
of $B$ is $X$. Fix coordinates $\left(x,y\right)$, where $x$ is
an $m$-tuple and $y$ is a single variable. Suppose that $\left(c\left(x\right),\gamma\left(x,y\right)\right)$
is a $1$-bounded subanalytic map on $B$, where the first component
only depends on the variables $x$. Then the nested presentation of
$F\circ\left(s,c\left(x\right),\gamma\left(x,y\right)\right)\in\mathcal{A}\left(B\right)$
is of the form 
\begin{equation}
F_{c}\circ\left(s,\gamma\left(x,y\right)\right)=\sum_{J}\xi_{J}^{c}\left(s,x\right)\left(\gamma\left(x,y\right)\right)^{J},\label{eq:nested form}
\end{equation}
where the coefficients $\xi_{J}^{c}$ now belong to $\mathcal{A}\left(X\right)$.
\begin{rem}
\label{rem: rem on suban strong}Let $s$ be a fixed real or complex
number. Then Examples \ref{ex: unit^s}, Remarks \ref{rem: analyticity in x and summability}
and \ref{rem: nested present} also apply to real- or complex-valued
subanalytic strong functions.
\end{rem}

\subsection{Subanalytic preparation\label{subsec:Subanalytic-preparation}}

Let $\mathbb{K}\subseteq\mathbb{C}$ be a subfield and recall that
$\mathbb{F}_{\mathbb{K}}$ is $\mathbb{R}$ if $\mathbb{K}\subseteq\mathbb{R}$
and $\mathbb{C}$ otherwise.
\begin{defn}
\label{def: y-prep psi}Let $X\subseteq\mathbb{R}^{m}$ be a subanalytic
cell and
\begin{equation}
B=\left\{ \left(x,y\right):\ x\in X,\ a\left(x\right)<y<b\left(x\right)\right\} ,\label{eq:A_theta=00003DB}
\end{equation}
where $a,b:X\longrightarrow\mathbb{R}$ are analytic subanalytic functions
with $1\leq a\left(x\right)<b\left(x\right)\ \text{for all }x\in X$,
and $b$ is allowed to be $\equiv+\infty$. We say that $B$ has\emph{
bounded $y$-fibres} if $b<+\infty$ and \emph{unbounded $y$-fibres}
if $b\equiv+\infty$.
\begin{itemize}
\item A $1$-bounded subanalytic map $\psi:B\longrightarrow\mathbb{R}^{M+2}\in\mathcal{S}_{c}^{M+2}\left(B\right)$
is \emph{$y$-prepared} if it has the form
\begin{equation}
\psi\left(x,y\right)=\left(c\left(x\right),\left(\frac{a\left(x\right)}{y}\right)^{\frac{1}{d}},\left(\frac{y}{b\left(x\right)}\right)^{\frac{1}{d}}\right),\label{eq: psi}
\end{equation}
where $d\in\mathbb{N}$. \\
If $b\equiv+\infty$, then we will implicitly assume that the last
component is missing and hence $\psi:B\longrightarrow\mathbb{R}^{M+1}$.
\item An $\mathbb{F}_{\mathbb{K}}$-valued subanalytic strong function $W:B\longrightarrow\mathbb{F}_{\mathbb{K}}$
is \emph{$\psi$-prepared} if $\psi$ is a $y$-prepared $1$-bounded
subanalytic map as in \eqref{eq: psi} and
\[
W\left(x,y\right)=F\circ\psi\left(x,y\right),
\]
for some power series $F\in\mathbb{F}_{\mathbb{K}}\left\llbracket Z\right\rrbracket $
which converges in a neighbourhood of the ball of radius $\frac{3}{2}$.
Notice that $W$ has also a nested presentation (see \ref{rem: nested present})
as a strongly convergent power series with coefficients $\mathbb{F}_{\mathbb{K}}$-valued
subanalytic strong functions on $X$, evaluated at $\gamma\left(x,y\right)=\left(\left(\frac{a\left(x\right)}{y}\right)^{\frac{1}{d}},\left(\frac{y}{b\left(x\right)}\right)^{\frac{1}{d}}\right)$
\item A subanalytic function $f\in\mathcal{S}\left(B\right)$ is \emph{prepared}
if there are $\nu\in\mathbb{Z}$, an analytic function $f_{0}\in\mathcal{S}\left(X\right)$
and a $\psi$-prepared real-valued subanalytic strong unit $U$ (for
some $\psi$ as in \eqref{eq: psi}) such that
\[
f\left(x,y\right)=f_{0}\left(x\right)y^{\frac{\nu}{d}}U\left(x,y\right)
\]
\end{itemize}
\end{defn}
Let us recall some notation from \cite[Definitions 3.2, 3.3, 3.4 and 3.8]{ccmrs:integration-oscillatory}.
In particular, $A\subseteq\mathbb{R}^{m+1}$ will be a cell open over
$\mathbb{R}^{m}$ (it will always be possible to suppose that the
base of $A$ is $X\subseteq\mathbb{R}^{m}$) with analytic subanalytic
centre $\theta_{A}$ and such that the set $I_{A}:=\left\{ y-\theta_{A}\left(x\right):\ \left(x,y\right)\in A\right\} $
is contained in one of the sets $\left(-\infty,-1\right),\left(-1,0\right),\left(0,1\right),\left(1,+\infty\right)$,
as in \cite[Definition 3.4]{ccmrs:integration-oscillatory}. We now
perform a change of coordinates with the aim of mapping the set $I_{A}$
to the interval $\left(1,+\infty\right)$: there are unique sign conditions
$\sigma_{A},\tau_{A}\in\left\{ -1,1\right\} $ such that
\begin{equation}
A=\left\{ \left(x,y\right):\ x\in X,\ a_{A}\left(x\right)<\sigma_{A}\left(y-\theta_{A}\left(x\right)\right)^{\tau_{A}}<b_{A}\left(x\right)\right\} \label{eq:A}
\end{equation}
for some analytic subanalytic functions $a_{A},b_{A}$ such that $1\leq a_{A}\left(x\right)<b_{A}\left(x\right)\leq+\infty$.
Let
\begin{equation}
B_{A}=\left\{ \left(x,y\right):\ x\in X,\ a_{A}\left(x\right)<y<b_{A}\left(x\right)\right\} \label{eq: Atheta}
\end{equation}
and $\Pi_{A}:B_{A}\longrightarrow A$ be the bijection
\begin{equation}
\Pi_{A}\left(x,y\right)=\left(x,\sigma_{A}y^{\tau_{A}}+\theta_{A}\left(x\right)\right),\ \Pi_{A}^{-1}\left(x,y\right)=\left(x,\sigma_{A}\left(y-\theta_{A}\left(x\right)\right)^{\tau_{A}}\right).\label{eq:Ptheta}
\end{equation}
We will still denote by $\Pi_{A}$ the map $\mathbb{C}\times B_{A}\ni\left(s,x,y\right)\longmapsto\left(s,\Pi_{A}\left(x,y\right)\right)\in\mathbb{C}\times A$.
\begin{rem}
\label{rem: cell at infty}By \cite[Definition 3.4(3)]{ccmrs:integration-oscillatory},
if $A$ is a cell of the form $A=\{\left(x,y\right):\ x\in X,\ y>f\left(x\right)\}$,
then $\sigma_{A}=\tau_{A}=1$ and $\theta_{A}=0$. Hence in this case
$a_{A}=f,\ b_{A}=+\infty$ and $B_{A}=A$.
\end{rem}
\begin{prop}
\label{prop: suban prep}\cite{lr:prep}\cite[Remark 3.12]{ccmrs:integration-oscillatory}Let
$\mathcal{F}\subseteq\mathcal{S}\left(X\times\mathbb{R}\right)$ be
a finite collection of subanalytic functions. There is a cell decomposition
of $\mathbb{R}^{m+1}$ compatible with $X$ such that for each cell
$A$ that is open over $\mathbb{R}^{m}$ (which we may suppose to
be of the form \eqref{eq:A}) and for every $h\in\mathcal{F}$, $h\circ\Pi_{A}$
is prepared on $B_{A}$.
\end{prop}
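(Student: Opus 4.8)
The plan is to deduce the statement from the classical subanalytic preparation theorem of Lion--Rolin \cite{lr:prep} together with the normalizing change of coordinates $\Pi_A$ recorded in \eqref{eq:Ptheta}. The classical theorem provides, for the finite family $\mathcal{F}$, a finite partition of $X\times\mathbb{R}$ into subanalytic cells, compatible with $X$, each open cell carrying a subanalytic centre $\theta$, such that on each open cell every $h\in\mathcal{F}$ is prepared with respect to $y-\theta(x)$: it equals a subanalytic coefficient in $x$ times a rational power of $|y-\theta(x)|$ times a strong unit whose bounded subanalytic arguments split into an $x$-part and ratios of the form $(y-\theta(x))^{\pm 1/d}$ against subanalytic bounds. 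First I would invoke this theorem and fix such a decomposition.

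Next I would refine the decomposition so that on each open cell $A$ the extra normalizations built into \eqref{eq:A} hold: namely (i) all the relevant subanalytic data---the centre $\theta_A$, the coefficient functions, and the bounds $a_A,b_A$---are analytic, which is achieved by a further subanalytic cell decomposition since subanalytic functions are piecewise analytic; (ii) the quantity $y-\theta_A(x)$ keeps a constant sign $\sigma_A\in\{-1,1\}$ on $A$; and (iii) the fibre set $I_A=\{y-\theta_A(x):(x,y)\in A\}$ is contained in exactly one of the four intervals $(-\infty,-1),(-1,0),(0,1),(1,+\infty)$, which fixes the exponent $\tau_A\in\{-1,1\}$ and yields bounds $1\le a_A(x)<b_A(x)\le+\infty$. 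This places $A$ in the form \eqref{eq:A}, with $B_A$ and $\Pi_A$ as in \eqref{eq: Atheta}--\eqref{eq:Ptheta}.

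The core step is to pull back the prepared form along the monotone subanalytic substitution $\Pi_A$. Since $\Pi_A(x,y)=(x,\sigma_A y^{\tau_A}+\theta_A(x))$, the new variable on $B_A$ satisfies $y-\theta_A=\sigma_A y^{\tau_A}$, so the monomial $|y-\theta_A|^{q}$ pulls back to $y^{\tau_A q}$; absorbing the sign and the analytic coefficient into a single analytic $f_0\in\mathcal{S}(X)$ and writing $\tau_A q=\nu/d$ with $\nu\in\mathbb{Z}$ and a common denominator $d$ gives the required factor $f_0(x)y^{\nu/d}$ (note $y>a_A\ge 1$, so this power is well defined). The strong unit pulls back to a strong unit whose $y$-dependence now runs through the two bounded subanalytic ratios $(a_A(x)/y)^{1/d}$ and $(y/b_A(x))^{1/d}$ (only the first in the unbounded case $b_A\equiv+\infty$), while its remaining bounded arguments are packaged into the $x$-part $c(x)$. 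Collecting these into $\psi(x,y)=(c(x),(a_A/y)^{1/d},(y/b_A)^{1/d})$ as in \eqref{eq: psi} exhibits $h\circ\Pi_A$ as $\psi$-prepared in the sense of Definition \ref{def: y-prep psi}.

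The main obstacle is the bookkeeping in this last step: one must show that, after the normalization (ii)--(iii), every bounded subanalytic argument of the classical unit that involves $y$ is indeed a power series in the two canonical ratios above, uniformly across the bounded and unbounded fibre cases, and that the resulting series still converges strongly, i.e. on a polydisk of radius $\frac{3}{2}$. The convergence is handled by choosing the common $d$ large enough and rescaling the bounded arguments so that they stay well inside the unit polydisk, exploiting that the classical Lion--Rolin unit already converges on a polydisk of radius strictly larger than $1$; the uniform matching of arguments to $(a_A/y)^{1/d}$ and $(y/b_A)^{1/d}$ is exactly where the two-sided boundedness $1\le a_A<b_A$ of the normalized cell is used.
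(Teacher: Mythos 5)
The paper gives no proof of this proposition: it is quoted directly from Lion--Rolin \cite{lr:prep} and \cite[Remark 3.12]{ccmrs:integration-oscillatory}, where the normalization of cells to the form \eqref{eq:A} and the pull-back along $\Pi_{A}$ are carried out. Your reconstruction follows exactly that route (classical preparation, refinement so that $I_{A}$ lands in one of the four reference intervals, substitution $y-\theta_{A}=\sigma_{A}y^{\tau_{A}}$, repackaging of the unit's bounded arguments into $\psi$), so it is correct and essentially the same argument as the one the citations encapsulate.
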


\section{Preparation of (parametric) power-constructible functions\label{sec:Preparation of powers}}

Let $\mathbb{K}\subseteq\mathbb{C}$ be a subfield and recall that
$\mathbb{F}_{\mathbb{K}}$ is $\mathbb{R}$ if $\mathbb{K}\subseteq\mathbb{R}$
and $\mathbb{C}$ otherwise. In this section, $y$ will be a single
variable. For each of the classes introduced in Sections \ref{subsec:power-constructible}
and \ref{subsec:Parametric powers}, we will give a \emph{prepared
presentation} of its elements, with respect to the last subanalytic
variable (denoted by $y$).

\subsection{\label{subsec:prep of power-constr}Preparation of power-constructible
functions}
\begin{defn}
\label{def: prepared generator of Cpow}Let $B$ be as in \eqref{eq:A_theta=00003DB}.
A generator $T$ of the $\mathbb{F}_{\mathbb{K}}$-algebra $\mathcal{C}^{\mathbb{K}}\left(B\right)$
is called \emph{prepared }if 
\begin{equation}
T\left(x,y\right)=G_{0}\left(x\right)y^{\frac{\eta}{d}}\left(\log y\right)^{\mu}W\left(x,y\right),\label{eq: prep gen of Cpow}
\end{equation}
where $G_{0}\in\mathcal{C}^{\mathbb{K}}\left(X\right),\ \eta\in\mathbb{K},\ \mu\in\mathbb{N}$
and $W$ is a $\psi$-prepared $\mathbb{F}_{\mathbb{K}}$-valued subanalytic
strong function, for some $1$-bounded $\psi$ as in \eqref{eq: psi}.
\end{defn}
It follows from Remark \ref{rem: rem on suban strong} that, in the
notation of \eqref{eq:nested form}, if $B$ has bounded $y$-fibres
(i.e. $b<+\infty$), then $W$ can be written as
\begin{equation}
\sum_{m,n}\xi_{m,n}^{c}\left(x\right)\left(\frac{a\left(x\right)}{y}\right)^{\frac{m}{d}}\left(\frac{y}{b\left(x\right)}\right)^{\frac{n}{d}},\label{eq: bounded Cpow W}
\end{equation}
and if $B$ has unbounded $y$-fibres (i.e. $b\equiv+\infty$), then
$W$ can be written as
\begin{equation}
\sum_{k}\xi_{k}^{c}\left(x\right)\left(\frac{a\left(x\right)}{y}\right)^{\frac{k}{d}}.\label{eq:unbounded Cpow W}
\end{equation}

\begin{prop}
\label{prop: prep of power-constr}Let $\mathcal{F}\subseteq\mathcal{C}^{\mathbb{K}}\left(X\times\mathbb{R}\right)$
be a finite collection of $\mathbb{K}$-power-constructible functions.
Then there is a cell decomposition of $\mathbb{R}^{m+1}$ compatible
with $X$ such that for each cell $A$ that is open over $\mathbb{R}^{m}$
(which we may suppose to be of the form \eqref{eq:A}) and each $h\in\mathcal{F}$,
$h\circ\Pi_{A}$ is a finite sum of prepared generators of the form
\eqref{def: prepared generator of Cpow}.
\end{prop}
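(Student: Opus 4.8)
The plan is to reduce everything to the subanalytic preparation theorem (Proposition \ref{prop: suban prep}) applied to the subanalytic \emph{ingredients} of the generators. First I would write each $h\in\mathcal{F}$ as a finite sum of generators $c\prod_{j}f_{j}^{\alpha_{j}}\log g_{j}$ as in Definition \ref{def:C^K}, with $f_j,g_j\in\mathcal{S}_+(X\times\mathbb{R})$, $\alpha_j\in\mathbb{K}$, $c\in\mathbb{F}_{\mathbb{K}}$, and collect into a single finite family $\mathcal{F}_0\subseteq\mathcal{S}(X\times\mathbb{R})$ all the functions $f_j$ and $g_j$ occurring in all these generators, over all $h\in\mathcal{F}$. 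Applying Proposition \ref{prop: suban prep} to $\mathcal{F}_0$ produces one cell decomposition of $\mathbb{R}^{m+1}$ compatible with $X$, which then serves every $h\in\mathcal{F}$ at once. I would fix a cell $A$ open over $\mathbb{R}^m$, written in the form \eqref{eq:A}, and pass to $B_A$ via $\Pi_A$. On $B_A$ every $f\in\mathcal{F}_0$ is prepared in the sense of Definition \ref{def: y-prep psi}; using the usual simultaneous form of the preparation theorem (taking a common denominator $d$ and concatenating the $1$-bounded tuples into a single $c(x)$), I may assume they are all prepared with respect to one $y$-prepared map $\psi$ as in \eqref{eq: psi}.

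Next I would compute the effect of preparation on each factor. If $f\circ\Pi_A(x,y)=f_0(x)\,y^{\nu/d}\,U(x,y)$ with $f_0\in\mathcal{S}(X)$, $\nu\in\mathbb{Z}$ and $U$ a $\psi$-prepared subanalytic strong unit, then $f_0>0$ on $X$ (since $f>0$ while $y^{\nu/d}>0$ and $U\in(\tfrac12,\tfrac32)$), so for $\alpha\in\mathbb{K}$ the real logarithm gives the identity $(f\circ\Pi_A)^{\alpha}=f_0(x)^{\alpha}\,y^{\alpha\nu/d}\,U(x,y)^{\alpha}$. Here $f_0^{\alpha}\in\mathcal{S}_+^{\mathbb{K}}(X)$, the exponent $\alpha\nu/d$ lies in $\mathbb{K}$, and $U^{\alpha}$ is again a $\psi$-prepared subanalytic strong function by Examples \ref{ex: unit^s} and Remark \ref{rem: rem on suban strong} (the binomial series $\sum_i\binom{\alpha}{i}(U-1)^i$ converges strongly, since $|U-1|<\tfrac12$ leaves room inside the polydisk of radius $\tfrac32$). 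Similarly, if $g\circ\Pi_A=g_0\,y^{\mu'/d}U'$ then $\log(g\circ\Pi_A)=\log g_0(x)+\tfrac{\mu'}{d}\log y+\log U'(x,y)$, where $\log g_0\in\mathcal{C}(X)\subseteq\mathcal{C}^{\mathbb{K}}(X)$ and, by the analogous argument applied to $\log U'=\sum_{i\ge1}\tfrac{(-1)^{i-1}}{i}(U'-1)^i$, the term $\log U'$ is a $\psi$-prepared subanalytic strong function.

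Finally I would multiply and regroup. For a single generator, $\prod_j(f_j\circ\Pi_A)^{\alpha_j}$ collapses to $\big(\prod_j f_{j,0}^{\alpha_j}\big)\,y^{\eta/d}\,\big(\prod_j U_j^{\alpha_j}\big)$ with $\eta=\sum_j\alpha_j\nu_j\in\mathbb{K}$, while expanding $\prod_j\log(g_j\circ\Pi_A)=\prod_j\big(L_j(x)+\lambda_j\log y+W_j'(x,y)\big)$ over all partitions of the index set into the three factor types yields a finite sum whose summands are products of a constructible factor in $x$, a power $(\log y)^{\mu}$, and a strong function. Since a finite product of $\psi$-prepared strong functions is again one (the product of the corresponding strongly convergent series still converges on a neighbourhood of the polydisk of radius $\tfrac32$), each resulting summand has exactly the shape $G_0(x)\,y^{\eta/d}(\log y)^{\mu}W(x,y)$ of \eqref{eq: prep gen of Cpow}: here $G_0\in\mathcal{C}^{\mathbb{K}}(X)$ absorbs $c$, the $f_{j,0}^{\alpha_j}$, the $L_j=\log g_{j,0}$ and the rational constants $\lambda_j$, while $W$ is the product of the $U_j^{\alpha_j}$ with the selected $W_j'$. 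I expect the main obstacle to be purely organizational: arranging a common $\psi$ for the whole family (a common $d$ and a common $c(x)$) and carefully tracking how the product of logarithms redistributes into powers of $\log y$, constructible factors in $x$, and strong functions. The single genuine analytic point, that fractional or complex powers and logarithms of strong units remain strongly convergent, is already furnished by Examples \ref{ex: unit^s} and Remark \ref{rem: rem on suban strong}.
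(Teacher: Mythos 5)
Your proposal is correct and takes essentially the same route as the paper: the paper's own proof is exactly the sketch you have expanded, namely prepare all the subanalytic data via Proposition \ref{prop: suban prep} and then track the effect of applying $\log$ or a $\mathbb{K}$-power to a prepared subanalytic function, the key observation being that $U^{\alpha}$ (and likewise $\log U$) of a $\psi$-prepared strong unit is again a $\psi$-prepared $\mathbb{F}_{\mathbb{K}}$-valued strong function. Your detailed bookkeeping of the common $\psi$, the binomial and logarithmic series, and the regrouping into the form \eqref{eq: prep gen of Cpow} fills in precisely what the paper leaves as a ``straightforward refinement.''
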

\begin{proof}
The proof is a straightforward refinement of the proofs of \cite[Corollary 3.5]{cluckers_miller:loci_integrability}
and \cite[Proposition 3.10]{ccmrs:integration-oscillatory}: one prepares
first all the subanalytic data appearing in $h$, by Proposition \ref{prop: suban prep},
and then observes the effect of applying $\log$ or a power $\eta\in\mathbb{K}$
to a subanalytic prepared function. In particular, notice that if
$U\left(x,y\right)$ is a $\psi$-prepared subanalytic strong unit,
then $U^{\eta}$ is again a $\psi$-prepared $\mathbb{F}_{\mathbb{K}}$-valued
subanalytic strong unit.
\end{proof}

\subsection{\label{subsec:prep of strong}Preparation of parametric strong functions}

Let $\mathbb{K}\subseteq\mathbb{C}$ be a subfield and refer to the
definitions of $\mathcal{E}_{\mathbb{K}},\mathcal{A}_{\mathbb{K}}$
in Section \ref{subsubsec:Variants}.
\begin{defn}
\label{def: prep param strong}Let $B$ be as in \eqref{eq:A_theta=00003DB}.
A parametric strong function $\Phi\in\mathcal{A}_{\mathbb{K}}\left(B\right)$
is called \emph{$\psi$-prepared} (where $\psi$ is as in \eqref{eq: psi})
if there exists a strongly convergent series $F=\sum\xi_{I}\left(s\right)Z^{I}\in\mathcal{E}_{\mathbb{K}}\left\llbracket Z\right\rrbracket $
such that 
\begin{equation}
\forall\left(s,x,y\right)\in\left(\mathbb{C}\setminus P\left(F\right)\right)\times B,\ \Phi\left(s,x,y\right)=F\circ\left(s,\psi\left(x,y\right)\right).\label{eq: prep param strong}
\end{equation}
\end{defn}
Notice that if $\Phi$ is $\psi$-prepared, then $\Phi$ has a nested
presentation (see Remark \ref{rem: nested present}) as a power series
with coefficients in $\mathcal{A}_{\mathbb{K}}\left(X\right)$, evaluated
at $\gamma\left(x,y\right)=\left(\left(\frac{a\left(x\right)}{y}\right)^{\frac{1}{d}},\left(\frac{y}{b\left(x\right)}\right)^{\frac{1}{d}}\right)$:
\begin{equation}
\forall\left(s,x,y\right)\in\mathbb{C}\setminus P\left(F\right)\times B,\ \Phi\left(s,x,y\right)=\sum_{m,n}\xi_{m,n}^{c}\left(s,x\right)\left(\frac{a\left(x\right)}{y}\right)^{\frac{m}{d}}\left(\frac{y}{b\left(x\right)}\right)^{\frac{n}{d}},\label{eq:nested prep param strong}
\end{equation}
where $\xi_{m,n}^{c}\left(s,x\right)=\sum_{J}\xi_{J,m,n}\left(s\right)\left(c\left(x\right)\right)^{J}\in\mathcal{A}_{\mathbb{K}}\left(X\right)$.
\begin{rem}
\label{rem: splitting of param}Let $\Phi\in\mathcal{A}_{\mathbb{K}}\left(B\right)$
be $\psi$-prepared, as above. If $B$ has unbounded $y$-fibres (i.e.
$b\equiv+\infty$ in \eqref{eq:A_theta=00003DB}), recall that 
\begin{equation}
\psi\left(x,y\right)=\left(c\left(x\right),\left(\frac{a\left(x\right)}{y}\right)^{\frac{1}{d}}\right),\label{eq: psi unbounded}
\end{equation}
hence the nested $\psi$-prepared form of $\Phi$ is
\begin{equation}
\forall\left(s,x,y\right)\in\left(\mathbb{C}\setminus P\left(F\right)\right)\times B,\ \Phi\left(s,x,y\right)=\sum_{k}\xi_{k}^{c}\left(s,x\right)\left(\frac{a\left(x\right)}{y}\right)^{\frac{k}{d}},\label{eq: nested unbounded}
\end{equation}
where $\xi_{k}^{c}\left(s,x\right)=\sum_{J}\xi_{J,k}\left(s\right)\left(c\left(x\right)\right)^{J}\in\mathcal{A}_{\mathbb{K}}\left(X\right)$.
\end{rem}
\begin{lem}
\label{lem: prep of strong}Let $\mathcal{F}\subseteq\mathcal{A}_{\mathbb{K}}\left(X\times\mathbb{R}\right)$
be a finite set of functions $\Phi$ which have no poles outside some
closed discrete set $P\subseteq\mathbb{K}$. Then there is a cell
decomposition of $\mathbb{R}^{m+1}$ compatible with $X$ such that
for each cell $A$ that is open over $\mathbb{R}^{m}$ (which we may
suppose to be of the form \eqref{eq:A}), each $\Phi\circ\Pi_{A}$
is $\psi$-prepared on $\left(\mathbb{C}\setminus P\right)\times B_{A}$
(for some $y$-prepared $1$-bounded subanalytic map $\psi$ as in
\eqref{eq: psi}).
\end{lem}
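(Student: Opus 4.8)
The plan is to reduce the statement to the subanalytic preparation theorem (Proposition \ref{prop: suban prep}) applied to the finitely many subanalytic functions that constitute the geometric data of the functions in $\mathcal{F}$, the meromorphic coefficients in $s$ playing no role whatsoever in the cell decomposition. First I would assume, by passing to a common refinement, that $\mathcal{F}=\{\Phi\}$ is a single function, and fix a presentation $\Phi(s,x,y)=F\circ(s,\phi(x,y))=\sum_I\xi_I(s)\,\phi(x,y)^I$, where $\phi\in\mathcal{S}_c^N(X\times\mathbb{R})$ is a $1$-bounded subanalytic map with components $\phi_1,\ldots,\phi_N$ and $F=\sum_I\xi_I(s)Z^I\in\mathcal{E}_{\mathbb{K}}\llbracket Z\rrbracket$ is strongly convergent with $P(F)\subseteq P$. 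The only subanalytic ingredients are the $\phi_i$, so I apply Proposition \ref{prop: suban prep} to the finite family $\{\phi_1,\ldots,\phi_N\}$, obtaining a cell decomposition of $\mathbb{R}^{m+1}$ compatible with $X$ such that on each cell $A$ of the form \eqref{eq:A} every $\phi_i\circ\Pi_A$ is a prepared subanalytic function. After refining so that all these preparations share one $y$-prepared $1$-bounded map $\psi$ as in \eqref{eq: psi} (common center $c(x)$ and common denominator $d$), I have $\phi_i\circ\Pi_A(x,y)=f_{0,i}(x)\,y^{\nu_i/d}\,U_i(x,y)$ with $\nu_i\in\mathbb{Z}$ and each $U_i$ a $\psi$-prepared subanalytic strong unit.

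The next step is to absorb the monomial factors into the special variables so as to exhibit each prepared component as an $s$-independent strongly convergent series in $\psi$. Writing $y^{1/d}=b(x)^{1/d}(y/b(x))^{1/d}=a(x)^{1/d}(a(x)/y)^{-1/d}$, a positive exponent $\nu_i>0$ is turned into a power of the variable $(y/b(x))^{1/d}$ and a negative exponent $\nu_i<0$ into a power of $(a(x)/y)^{1/d}$, so that no division by a special variable ever occurs; the accompanying coefficient $f_{0,i}(x)b(x)^{\nu_i/d}$ (resp.\ $f_{0,i}(x)a(x)^{\nu_i/d}$) is bounded on $X$ because $\phi_i$ is $1$-bounded and $U_i$ is a unit, and, after one further refinement of the cell decomposition, is itself a subanalytic strong function of $c(x)$. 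The remaining constant factors $f_{0,i}$ with $\nu_i=0$ are (after a harmless rescaling) incorporated as new components of the center. In the unbounded-fibre case $b\equiv+\infty$ the map $\psi$ has the form \eqref{eq: psi unbounded}, and $1$-boundedness of $\phi_i$ forces $\nu_i\le0$, so that only the variable $(a(x)/y)^{1/d}$ appears. In this way I obtain, as in Remark \ref{rem: rem on suban strong}, $s$-independent strongly convergent series $G_1,\ldots,G_N$ with $\phi_i\circ\Pi_A=G_i\circ\psi$, and consequently
\[
\Phi\circ\Pi_A=F\circ(s,\phi\circ\Pi_A)=F\circ(s,G\circ\psi)=\tilde{F}\circ(s,\psi),
\]
where $\tilde{F}(s,Z):=F\bigl(s,G_1(Z),\ldots,G_N(Z)\bigr)\in\mathcal{E}_{\mathbb{K}}\llbracket Z\rrbracket$ is the formal substitution. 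Expanding by Remark \ref{rem: nested present}, this is exactly the nested $\psi$-prepared shape \eqref{eq:nested prep param strong}, so it only remains to check that $\tilde F$ converges strongly with $P(\tilde F)\subseteq P$.

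The main obstacle is this last verification, namely that substituting the $1$-bounded strongly convergent map $G$ into the strongly convergent $F$ preserves the three conditions of Definition \ref{def: strong convergence}. For each fixed $s_0\in\mathbb{C}\setminus P$ this is the classical closure of subanalytic strong functions under composition: the gap between the unit polydisk in which $\psi$, and hence $G\circ\psi$, takes its values and the radius-$\tfrac32$ polydisk of strong convergence of $F(s_0,\cdot)$ is precisely what guarantees that $\tilde F(s_0,\cdot)$ again converges on a neighbourhood of the closed radius-$\tfrac32$ polydisk, and the normal summability recorded in Remark \ref{rem: analyticity in x and summability} lets me rearrange the double series defining the coefficients of $\tilde F$. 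The only genuinely parametric point is the dependence on $s$: since $G$ carries no poles, each coefficient of $\tilde F$ is a sum of the $\xi_I$ against $s$-independent multipliers, so by Remark \ref{rem: strong convergence} the local meromorphic structure and the uniform bound on the pole order transfer from $F$ to $\tilde F$ unchanged, whence $P(\tilde F)\subseteq P(F)\subseteq P$. Thus $\Phi\circ\Pi_A$ is $\psi$-prepared on $(\mathbb{C}\setminus P)\times B_A$, as required; the whole argument is the exact analogue, for a single series $F$ with meromorphic coefficients, of the preparation of power-constructible functions carried out in Proposition \ref{prop: prep of power-constr}.
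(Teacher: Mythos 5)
Your proposal is correct and follows essentially the same route as the paper: apply the subanalytic preparation theorem to the components of the $1$-bounded map underlying $\Phi$, absorb the resulting monomials $y^{\nu_i/d}$ into the special variables $\left(a(x)/y\right)^{1/d}$ and $\left(y/b(x)\right)^{1/d}$ according to the sign of $\nu_i$ (with the rescaled coefficients staying $1$-bounded), and then observe that the substituted series is again strongly convergent with the same meromorphic coefficients, hence the same poles. The paper carries out the substitution by an explicit term-by-term expansion (producing the coefficients $d_{K,m,n}^{I}\varphi_{I}\left(s\right)$) rather than by your formal composition $\tilde{F}=F\left(s,G_{1},\ldots,G_{N}\right)$, but this is only a difference of presentation.
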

\begin{proof}
We will consider the case of a single function $\Phi$ for simplicity
of notation (the general case is obtained by taking as $\Phi$ the
product of the functions in $\mathcal{F}$). Write $\Phi=G\circ\left(s,\eta\right)$,
where $G=\sum_{I}\varphi_{I}\left(s\right)T^{I}\in\mathcal{E}\left\llbracket T\right\rrbracket $
is a strongly convergent series in $N$ variables $T$ and $\eta=\left(\eta_{1},\ldots,\eta_{N}\right):X\times\mathbb{R}\longrightarrow\mathbb{R}^{N}$
is a $1$-bounded subanalytic map.

Apply subanalytic preparation (Proposition \eqref{prop: suban prep})
to the components of $\eta$. This yields a cell decomposition of
$X\times\mathbb{R}$ such that, if $A$ is a cell of the form \eqref{eq:A},
then the components of $\eta\circ\Pi_{A}$ are $\hat{\psi}$-prepared
$B_{A}$, where $\hat{\psi}\left(x,y\right)=\left(\hat{c}\left(x\right),\left(\frac{a_{A}\left(x\right)}{y}\right)^{\frac{1}{d}},\left(\frac{y}{b_{A}\left(x\right)}\right)^{\frac{1}{d}}\right)$
is a $y$-prepared strongly subanalytic map:
\[
\eta_{j}\circ\Pi_{A}\left(x,y\right)=c_{j}\left(x\right)y^{\frac{\ell_{j}}{d}}v_{j}\left(x,y\right)\ \ \ \left(1\leq j\leq N\right),
\]
where $c_{j}\in\mathcal{S}\left(X\right)$ is analytic, $\ell_{j}$
is an integer and $v_{j}$ is a $\hat{\psi}$-prepared strong unit.
By rescaling the unit, we may furthermore assume that $\left|c_{j}\left(x\right)y^{\frac{\ell_{j}}{d}}\right|\leq1$
on the closure of $B_{A}$. Partition
\begin{align*}
\left\{ 1,\ldots,N\right\}  & =\bigcup_{*\in\left\{ <,=,>\right\} }J_{*}\\
 & =\bigcup_{*\in\left\{ <,=,>\right\} }\left\{ j:\ \frac{\ell_{j}}{d}*0\right\} 
\end{align*}
and notice that the subanalytic map $\tilde{c}:=\left(\tilde{c}_{1},\ldots,\tilde{c}_{N}\right)$
given by
\[
\tilde{c_{j}}\left(x\right):=\begin{cases}
c_{j}\left(x\right)\cdot\left(a_{A}\left(x\right)\right)^{\frac{\ell_{j}}{d}} & \left(j\in J_{<}\right)\\
c_{j}\left(x\right) & \left(j\in J_{=}\right)\\
c_{j}\left(x\right)\cdot\left(b_{A}\left(x\right)\right)^{\frac{\ell_{j}}{d}} & \left(j\in J_{>}\right)
\end{cases}
\]
is $1$-bounded. Hence,
\[
\eta_{j}\circ\Pi_{A}\left(x,y\right)=\begin{cases}
\tilde{c_{j}}\left(x\right)\left(\frac{a_{A}\left(x\right)}{y}\right)^{-\frac{\ell_{j}}{d}}v_{j}\left(x,y\right) & \left(j\in J_{<}\right)\\
\tilde{c_{j}}\left(x\right)v_{j}\left(x,y\right) & \left(j\in J_{=}\right)\\
\tilde{c_{j}}\left(x\right)\left(\frac{y}{b_{A}\left(x\right)}\right)^{\frac{\ell_{j}}{d}}v_{j}\left(x,y\right) & \left(j\in J_{>}\right)
\end{cases}
\]
and, for $I=\left(i_{1},\ldots,i_{N}\right)\in\mathbb{N}^{N}$,
\[
\left(\eta_{j}\circ\Pi_{A}\left(x,y\right)\right)^{i_{j}}=\tilde{c_{j}}\left(x\right)^{i_{j}}f_{I,j}\left(x,y\right),
\]
where
\[
f_{I,j}\left(x,y\right)=\begin{cases}
\left(\frac{a_{A}\left(x\right)}{y}\right)^{-\frac{\ell_{j}}{d}i_{j}}\left(v_{j}\left(x,y\right)\right)^{i_{j}} & \left(j\in J_{<}\right)\\
\left(v_{j}\left(x,y\right)\right)^{i_{j}} & \left(j\in J_{=}\right)\\
\left(\frac{y}{b_{A}\left(x\right)}\right)^{\frac{\ell_{j}}{d}i_{j}}\left(v_{j}\left(x,y\right)\right)^{i_{j}} & \left(j\in J_{>}\right)
\end{cases}.
\]

Notice that the $f_{I,j}$ are $\hat{\psi}$-prepared subanalytic
strong functions, hence so is their product $f_{I}\left(x,y\right):=\prod_{j\leq N}f_{I,j}\left(x,y\right)$.
Therefore, there is a strongly convergent power series with coefficients
in $\mathbb{F}_{\mathbb{K}}$ 
\[
F_{I}=\sum_{K,m,n}d_{K,m,n}^{I}\tilde{Z}^{K}Y_{1}^{m}Y_{2}^{n}\in\mathbb{F}_{\mathbb{K}}\left\llbracket \tilde{Z},Y_{1},Y_{2}\right\rrbracket 
\]
 such that $f_{I}\left(x,y\right)=F_{I}\circ\hat{\psi}\left(x,y\right)$.

Therefore, on $B_{A}$ we can write
\begin{align*}
 & \Phi\circ\Pi_{A}\left(s,x,y\right)=\\
 & =\sum_{I=\left(i_{1},\ldots,i_{N}\right)}\varphi_{I}\left(s\right)\left(\eta\circ\Pi_{A}\left(x,y\right)\right)^{I}\\
 & =\sum_{I=\left(i_{1},\ldots,i_{N}\right)}\varphi_{I}\left(s\right)\left(\tilde{c}\left(x\right)\right)^{I}f_{I}\left(x,y\right)\\
 & =\sum_{I=\left(i_{1},\ldots,i_{N}\right)}\varphi_{I}\left(s\right)\left(\tilde{c}\left(x\right)\right)^{I}\sum_{K,m,n}d_{K,m,n}^{I}\left(\hat{c}\left(x\right)\right)^{K}\left(\frac{a_{A}\left(x\right)}{y}\right)^{\frac{m}{d}}\left(\frac{y}{b_{A}\left(x\right)}\right)^{\frac{n}{d}}\\
 & =\sum_{I,K,m,n}d_{K,m,n}^{I}\varphi_{I}\left(s\right)\left(\tilde{c}\left(x\right)\right)^{I}\left(\hat{c}\left(x\right)\right)^{K}\left(\frac{a_{A}\left(x\right)}{y}\right)^{m}\left(\frac{y}{b_{A}\left(x\right)}\right)^{n}.
\end{align*}
Now, if we let $\tilde{I}=\left(I,K\right)$ and 
\[
\xi_{\tilde{I},m,n}\left(s\right)=d_{K,m,n}^{I}\varphi_{I}\left(s\right),
\]
then the family $\left\{ \xi_{\tilde{I},m,n}\right\} $ is strong
and the series
\[
F=\sum_{\tilde{I},m.n}\xi_{\tilde{I},m,n}\left(s\right)Z^{\tilde{I}}Y_{1}^{m}Y_{2}^{n}\in\mathcal{E}_{\mathbb{K}}\left\llbracket Z,Y_{1},Y_{2}\right\rrbracket 
\]
is strongly convergent (with $P\left(F\right)=P\left(G\right)$).
Let $c\left(x\right)=\left(\tilde{c}\left(x\right),\hat{c}\left(x\right)\right)$.
Then, in the notation of \eqref{eq: psi}, on $B_{A}$ we have 
\[
\Phi\circ\Pi_{A}\left(s,x,y\right)=F\circ\left(s,\psi\left(x,y\right)\right),
\]
so $\Phi\circ\left(s,\Pi_{A}\left(x,y\right)\right)$ is $\psi$-prepared
on $B_{A}$, as required.
\end{proof}

\subsection{\label{subsec:prep of mellin}Preparation of parametric power-constructible
functions}

In this section we let $\mathcal{D}$ be either $\mathcal{C}^{\mathbb{K},\mathcal{M}}$
or $\mathcal{C}^{\mathcal{P}\left(\mathbb{K}\right),\mathcal{M}}$
(see Section \ref{subsubsec:Variants}).
\begin{defn}
\label{def: prep gen of C^M}Let $B$ be as in \eqref{eq:A_theta=00003DB}
and $P\subseteq\mathbb{K}$ be a closed discrete set. A generator
$T\in\mathcal{D}\left(B\right)$ with no poles outside $P$ is \emph{prepared}
if for all $\left(s,x,y\right)\in\left(\mathbb{C}\setminus P\right)\times B$,
\begin{equation}
T\left(s,x,y\right)=G_{0}\left(s,x\right)y^{\frac{\ell s+\eta}{d}}\left(\log y\right)^{\mu}\Phi\left(s,x,y\right),\label{eq:prep gen CpowM}
\end{equation}
where $G_{0}\in\mathcal{D}\left(X\right),\ \ell,\eta\in\mathbb{\mathbb{K}},\ \mu\in\mathbb{N}$
and $\Phi\in\mathcal{A}_{\mathbb{K}}\left(B\right)$ is a $\psi$-prepared
parametric strong function (see Definition \ref{def: prep param strong}).
If $\mathcal{D}=\mathcal{C}^{\mathbb{K},\mathcal{M}}$, then we require
that $\ell\in\mathbb{Z}$.
\end{defn}
\begin{prop}
\label{prop: prep of C^M}Let $P\subseteq\mathbb{K}$ be a closed
discrete set and $h\in\mathcal{D}\left(X\times\mathbb{R}\right)$
have no poles outside $P$. Then there is a cell decomposition of
$\mathbb{R}^{m+1}$ compatible with $X$ such that for each cell $A$
that is open over $\mathbb{R}^{m}$ (which we may suppose to be of
the form \eqref{eq:A}), $h\circ\Pi_{A}$ is a finite sum of prepared
generators on $\left(\mathbb{C}\setminus P\right)\times B_{A}$.
\end{prop}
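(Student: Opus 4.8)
The plan is to reduce to a single generator and prepare its constituent factors simultaneously. Since $h$ is a finite sum of generators and finitely many cell decompositions admit a common refinement, it suffices to treat one generator. For $\mathcal{D}=\mathcal{C}^{\mathbb{K},\mathcal{M}}$ this is a product $\Phi\left(s,x,y\right)\,g\left(x,y\right)\,f\left(x,y\right)^{s}$ with $\Phi\in\mathcal{A}_{\mathbb{K}}$, $g\in\mathcal{C}^{\mathbb{K}}$ and $f\in\mathcal{S}_{+}$, and for $\mathcal{D}=\mathcal{C}^{\mathcal{P}\left(\mathbb{K}\right),\mathcal{M}}$ the last factor is replaced by $f_{1}^{\alpha_{1}s}\cdots f_{n}^{\alpha_{n}s}$. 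Writing $\Phi=G\circ\left(s,\eta\right)$ for a strongly convergent $G$ and a $1$-bounded subanalytic map $\eta$, and expressing $g$ as a finite sum of constructible generators built from subanalytic functions $f_{i,j},g_{i,j}$, I would collect into a single finite family $\mathcal{F}\subseteq\mathcal{S}\left(X\times\mathbb{R}\right)$ all of these data together with $f$ (resp. the $f_{i}$). Since the proofs of Lemma \ref{lem: prep of strong} and Proposition \ref{prop: prep of power-constr} proceed by preparing the underlying subanalytic data, applying Proposition \ref{prop: suban prep} once to $\mathcal{F}$ yields a single cell decomposition, with a common change of variables $\Pi_{A}$, common $a_{A},b_{A},\theta_{A}$ and --- after passing to a common multiple of the denominators --- a common $d$, on which both conclusions hold simultaneously, relative to one and the same $y$-prepared map $\psi$ of the form \eqref{eq: psi}.

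On each such cell I would prepare the three factors relative to this common $\psi$. By Lemma \ref{lem: prep of strong}, $\Phi\circ\Pi_{A}$ is $\psi$-prepared; by Proposition \ref{prop: prep of power-constr}, $g\circ\Pi_{A}$ is a finite sum of prepared $\mathcal{C}^{\mathbb{K}}$-generators $G_{0}\left(x\right)\,y^{\eta/d}\left(\log y\right)^{\mu}W\left(x,y\right)$ with $W$ a $\psi$-prepared subanalytic strong function. For the parametric power, subanalytic preparation of $f$ gives $f\circ\Pi_{A}\left(x,y\right)=f_{0}\left(x\right)\,y^{\nu/d}\,U\left(x,y\right)$ with $f_{0}\in\mathcal{S}_{+}\left(X\right)$ and $U$ a $\psi$-prepared strong unit; raising to the power $s$ splits this as
\[
\left(f\circ\Pi_{A}\right)^{s}=f_{0}\left(x\right)^{s}\,y^{\nu s/d}\,U\left(x,y\right)^{s}.
\]
Here $f_{0}\left(x\right)^{s}\in\mathcal{P}\left(\mathcal{S}_{+}\left(X\right)\right)\subseteq\mathcal{D}\left(X\right)$, the monomial contributes the $\ell s$-part of the exponent, and by the second item of Examples \ref{ex: unit^s} the factor $U^{s}=\bigl(\sum_{i}{s \choose i}F^{i}\bigr)\circ\left(s,\psi\right)$ is a $\psi$-prepared parametric strong function. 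In the $\mathcal{C}^{\mathcal{P}\left(\mathbb{K}\right),\mathcal{M}}$ case one repeats this for each $f_{i}^{\alpha_{i}s}$, so that the accumulated $\ell=\sum_{i}\nu_{i}\alpha_{i}$ lies in $\mathbb{K}$, whereas in the $\mathcal{C}^{\mathbb{K},\mathcal{M}}$ case a single power $s$ gives $\ell=\nu\in\mathbb{Z}$, matching the two clauses of Definition \ref{def: prep gen of C^M}.

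It then remains to regroup. Distributing over the finite sum coming from $g\circ\Pi_{A}$, each resulting summand is a product of a $\psi$-prepared parametric strong function (from $\Phi$), a prepared constructible generator (from $g$), and the three factors above. I would collect $G_{0}\left(x\right)\cdot f_{0}\left(x\right)^{s}$ into a coefficient in $\mathcal{D}\left(X\right)$, combine the monomials $y^{\eta/d}\cdot y^{\nu s/d}=y^{\left(\nu s+\eta\right)/d}$, keep the power of $\log y$, and multiply the remaining strong factors $\Phi\cdot W\cdot U^{s}$ into a single object, yielding a prepared generator in the sense of \eqref{eq:prep gen CpowM}. The main obstacle is precisely this last multiplication: one must check that the product of finitely many $\psi$-prepared parametric strong functions (for the common $\psi$) is again $\psi$-prepared, i.e. that strong convergence in the sense of Definition \ref{def: strong convergence} is stable under products and that the pole set does not grow. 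This follows because the product of the corresponding series in the variables $\left(Z,Y_{1},Y_{2}\right)$ is again strongly convergent with pole set contained in $P\subseteq\mathbb{K}$ (the factors $U^{s}$ and $f_{0}^{s}$ contributing no poles), its coefficients lying in $\mathcal{A}_{\mathbb{K}}\left(X\right)$ through the nested presentation of Remark \ref{rem: nested present}. Finally, the unbounded-fibre case ($b_{A}\equiv+\infty$) is handled identically, using the truncated map \eqref{eq: psi unbounded} and the one-variable nested form of Remark \ref{rem: splitting of param}.
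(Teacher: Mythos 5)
Your proposal is correct and follows essentially the same route as the paper: one simultaneous application of the subanalytic preparation theorem to all the subanalytic data (the components of the maps underlying $\Phi$, the functions building $g$, and the $f_i$), followed by Lemma \ref{lem: prep of strong}, Proposition \ref{prop: prep of power-constr} and the second of Examples \ref{ex: unit^s} for the factor $U^{s}$, and a final regrouping into the form \eqref{eq:prep gen CpowM}. The only caveat is that reducing to one generator via a ``common refinement'' of cell decompositions is not quite safe (preparedness is relative to the cell's centre and endpoints and need not survive refinement); but your own device of preparing a single family $\mathcal{F}$ at once applies verbatim to the data of all generators simultaneously, which is exactly what the paper does, so nothing is lost.
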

\begin{proof}
Suppose first that $\mathcal{D}=\mathcal{C}^{\mathbb{\mathcal{P}\left(K\right)},\mathcal{M}}$.
Write $h$ as a finite sum of generators of the form 
\[
T\left(s,x,y\right)=\Phi\left(s,x,y\right)\cdot g\left(x,y\right)\cdot f_{1}\left(x,y\right)^{\alpha_{1}s}\cdot\ldots\cdot f_{n}\left(x,y\right)^{\alpha_{n}s},
\]
with $\Phi\in\mathcal{A}_{\mathbb{K}}\left(X\times\mathbb{R}\right),\ g\in\mathcal{C}^{\mathbb{K}}\left(X\times\mathbb{R}\right),\ f_{i}\in\mathcal{S}_{+}\left(X\times\mathbb{R}\right),\ \alpha_{i}\in\mathbb{K}$.
Apply Proposition \ref{prop: suban prep} simultaneously to all the
$f_{i}$ and to all the subanalytic data in all the $\Phi$ and $g$
appearing in the generators. This yields a cell decomposition of $X\times\mathbb{R}$
such that on each cell $A$ with centre $\theta_{A}$, there is a
$y$-prepared subanalytic map $\psi$ as in \eqref{eq: psi} such
that, after composing with $\Pi_{A}$ all the subanalytic functions
considered above are prepared. In particular, each of the $f_{j}$
appearing in the parametric power, after composing with $\Pi_{A}$,
has the form
\[
\tilde{f}_{j}\left(x\right)y^{\frac{\ell_{j}}{d}}U_{j}\left(x,y\right),
\]
where $\tilde{f}_{j}\in\mathcal{S}_{+}\left(X\right),\ \ell_{j}\in\mathbb{Z}$
and $U_{j}\in\mathcal{S}\left(B_{A}\right)$ is a $\psi$-prepared
subanalytic strong unit. Hence, by the second of Examples \ref{ex: unit^s},
$\Xi_{j}\left(s,x,y\right):=\left|U_{j}\left(x,y\right)\right|^{s}\in\mathcal{A}_{\mathbb{K}}\left(B_{A}\right)$
and is $\psi$-prepared. 

Apply Proposition \ref{prop: prep of power-constr} to prepare each
$g\circ\Pi_{A}$, which can be hence written as a finite sum of terms
of the form
\[
g_{j}\left(x\right)y^{\frac{\eta_{j}}{d}}\left(\log y\right)^{\nu_{j}}W_{j}\left(x,y\right),
\]
where $\nu_{j}\in\mathbb{N},\ \eta_{j}\in\mathbb{K},\ g_{j}\in\mathcal{C}^{\mathbb{K}}\left(X\right)$
is analytic and $W_{j}$ is an $\mathbb{F}_{\mathbb{K}}$-valued $\psi$-prepared
subanalytic strong function on $B_{A}$.

Apply Lemma \ref{lem: prep of strong} to $\psi$-prepare each $\Phi\circ\Pi_{A}$
on $B_{A}$ as $F_{j}\circ\left(s,\psi\left(x,y\right)\right)$, where
$\psi$ has now some extra components depending only on the variables
$x$. Notice that this does not affect the preparation work already
done.

Finally, define $G_{j}\left(s,x\right)=\tilde{f}_{j}\left(x\right)^{\alpha_{j}s}g_{j}\left(x\right)$
and $\Phi_{j}\left(s,x,y\right)=F_{j}\circ\left(s,\psi\left(x,y\right)\right)\cdot W_{j}\left(x,y\right)\cdot\Xi_{j}\left(s,x,y\right)$.
Then clearly $G_{j}\in\mathcal{C}^{\mathbb{K},\mathcal{M}}\left(X\right)$
and $\Phi_{j}\in\mathcal{A}_{\mathbb{K}}\left(B_{A}\right)$ is $\psi$-prepared,
with no poles outside $P$, hence we have written $h\circ\Pi_{A}$
as a finite sum of terms of the form
\[
G_{j}\left(s,x\right)\cdot y^{\frac{\alpha_{j}\ell_{j}s+\eta_{j}}{d}}\cdot\left(\log y\right)^{\nu_{j}}\Phi_{j}\left(s,x,y\right)
\]
and we are done.

If $\mathcal{D}=\mathcal{C}^{\mathbb{K},\mathcal{M}}$, then repeat
the above proof with $n=\alpha_{1}=1$.
\end{proof}

\section{Integration of prepared (parametric) power-constructible generators\label{sec:Integration-of-gen of C^M}}

In this section we let $\mathcal{D}$ be either $\mathcal{C}^{\mathbb{K},\mathcal{M}}$
or $\mathcal{C}^{\mathcal{P}\left(\mathbb{K}\right),\mathcal{M}}$.

Given a cell $B\subseteq\mathbb{R}^{m+1}$, we study the integrability,
and compute the integral, of a prepared generator of $\mathcal{D}\left(B\right)$.

Let $B$ be as in \eqref{eq:A_theta=00003DB} and $T\in\mathcal{D}\left(B\right)$
be a prepared generator with no poles outside $P$ (for some discrete
closed set $P\subseteq\mathbb{K}$). We aim to study the nature of
the parametric integral
\begin{equation}
\int_{a\left(x\right)}^{b\left(x\right)}T\left(s,x,y\right)\text{d}y,\label{eq: integral of gen}
\end{equation}
for all $\left(s,x\right)\in\left(\mathbb{C}\setminus P\right)\times X$
such that $y\longmapsto T\left(s,x,y\right)\in L^{1}\left(B_{x}\right)$.

We prove that there exist a closed discrete set $P'\supseteq P$ and
a function $H\in\mathcal{D}\left(X\right)$ with no poles outside
$P'$ such that the above integral coincides with $H$.

We start by recalling the classical formula to compute the antiderivative
of any power-log monomial in $y$.
\begin{lem}
\label{lem: integral of a naive in y}Let $\ell,\gamma\in\mathbb{K},\ d,\mu\in\mathbb{N}$
with $\ell,d\not=0$. Let $s\in\mathbb{C}$ such that $\ell s+\gamma\not=-d$.
Then

\begin{equation}
\int y^{\frac{\ell s+\gamma}{d}}\left(\log y\right)^{\mu}\mathrm{d}y=\sum_{i=0}^{\mu}c_{\mu,i}\left(\log y\right)^{i}\frac{y^{\frac{\ell s+\gamma+d}{d}}}{\left(\ell s+\gamma+d\right)^{\mu+1-i}},\label{eq:antiderivative}
\end{equation}
where $c_{\mu,i}=\left(-1\right)^{\mu-i}\frac{\mu!}{i!}d^{\mu+1-i}$.
\end{lem}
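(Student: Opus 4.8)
The plan is to treat this as an elementary antiderivative and verify the proposed closed form directly by differentiation, so that no idea beyond coefficient bookkeeping is needed. First I would set $\beta=\frac{\ell s+\gamma+d}{d}$, observing that the hypothesis $\ell s+\gamma\neq -d$ says precisely $\beta\neq0$, and that the monomial factor of the integrand is $y^{\frac{\ell s+\gamma}{d}}=y^{\beta-1}$. Writing the coefficients of the claimed primitive as
\[
C_i:=\frac{c_{\mu,i}}{(\ell s+\gamma+d)^{\mu+1-i}}=(-1)^{\mu-i}\frac{\mu!}{i!}\,\beta^{-(\mu+1-i)}
\]
(the factor $d^{\mu+1-i}$ hidden in $c_{\mu,i}$ cancelling against $(\ell s+\gamma+d)^{\mu+1-i}=(d\beta)^{\mu+1-i}$), the right-hand side of \eqref{eq:antiderivative} becomes $R(y)=\sum_{i=0}^{\mu}C_i(\log y)^iy^{\beta}$, and it suffices to prove $R'(y)=y^{\beta-1}(\log y)^{\mu}$.

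The key step is then the differentiation of $R$: by the product rule each summand yields $C_i\big(i(\log y)^{i-1}+\beta(\log y)^i\big)y^{\beta-1}$, and after factoring out $y^{\beta-1}$ and reindexing the logarithmic-derivative terms I would reduce the identity $R'=y^{\beta-1}(\log y)^\mu$ to a coefficient comparison in powers of $\log y$. The top degree gives $\beta C_\mu=1$, which holds since $C_\mu=\beta^{-1}$; for each $k$ with $0\le k\le\mu-1$ the coefficient is $(k+1)C_{k+1}+\beta C_k$, and using $(k+1)/(k+1)!=1/k!$ this collapses to $\frac{\mu!}{k!}\beta^{-(\mu-k)}\big((-1)^{\mu-k-1}+(-1)^{\mu-k}\big)=0$. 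This telescoping sign cancellation is the entire content of the verification.

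An equally routine alternative would be induction on $\mu$: the base case $\mu=0$ is $\int y^{\beta-1}\,\mathrm{d}y=y^{\beta}/\beta$, and one integration by parts with $u=(\log y)^\mu$, $\mathrm{d}v=y^{\beta-1}\,\mathrm{d}y$ yields $\frac{(\log y)^\mu y^\beta}{\beta}-\frac{\mu}{\beta}\int y^{\beta-1}(\log y)^{\mu-1}\,\mathrm{d}y$, into which the inductive hypothesis substitutes to reproduce the recursion $c_{\mu,i}=-\mu\,d\,c_{\mu-1,i}$ (for $i<\mu$) together with the top coefficient $c_{\mu,\mu}=d$. Either route is free of genuine obstacles; the only hypothesis doing real work is $\ell s+\gamma+d\neq0$, which guards the division by $\beta$ and excludes the exceptional exponent $-1$ at which the primitive would instead acquire an extra factor of $\log y$.
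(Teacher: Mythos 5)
Your verification is correct: with $\beta=\frac{\ell s+\gamma+d}{d}$ the coefficient comparison $(k+1)C_{k+1}+\beta C_k=0$ for $k<\mu$ and $\beta C_\mu=1$ does check out, as does the integration-by-parts recursion $c_{\mu,\mu}=d$, $c_{\mu,i}=-\mu\,d\,c_{\mu-1,i}$. The paper itself offers no proof of this lemma \textemdash{} it is stated as a recalled classical antiderivative formula \textemdash{} so your argument simply supplies the routine computation the authors omit, and it does so accurately, including the correct identification of the hypothesis $\ell s+\gamma\neq-d$ as exactly what excludes the exceptional exponent $-1$.
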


\subsection{Cells with bounded $y$-fibres\label{subsec:Bounded-cells}}

Recall that $B$ is as in \eqref{eq:A_theta=00003DB} and suppose
that $b<+\infty$. Let $T\in\mathcal{D}\left(B\right)$ be a prepared
generator (as in \eqref{eq:prep gen CpowM}) without poles outside
some closed discrete set $P\subseteq\mathbb{K}$. We study the integrability
of $T$ on $B$: since $B$ has bounded $y$-fibres, the function
$y\longmapsto T\left(s,x,y\right)$ extends to a continuous function
on the boundary of $B_{x}$, hence the integral 
\[
\int_{a\left(x\right)}^{b\left(x\right)}T\left(s,x,y\right)\text{d}y
\]
 is finite. Let us compute it.

Let 
\[
P':=\begin{cases}
P\cup\left\{ s:\ \ell s+\eta\in\mathbb{Z}\right\}  & \text{if\ }\ell\not=0,\\
P & \text{if\ }\ell=0.
\end{cases}
\]
There are several cases to consider.

$\bullet$ If $\ell\not=0$, then, for $\left(s,x\right)\in\left(\mathbb{C}\setminus P'\right)\times X$,
we deduce from Lemma \ref{lem: integral of a naive in y} and normal
convergence that

\begin{align}
 & \int_{a\left(x\right)}^{b\left(x\right)}T\left(s,x,y\right)\text{d}y\label{eq:integral of bounded gen}\\
 & =\sum_{i=0}^{\mu}\sum_{m,n}\frac{c_{\mu,i}G_{0}\left(s,x\right)\xi_{m,n}\left(s,x\right)}{\left(\ell s+\eta+d-m+n\right)^{\mu+1-i}}\frac{\left(a\left(x\right)\right)^{\frac{m}{d}}}{\left(b\left(x\right)\right)^{\frac{n}{d}}}\cdot\left[y^{\frac{\ell s+\eta+d-m+n}{d}}\left(\log y\right)^{i}\right]_{a\left(x\right)}^{b\left(x\right)}\nonumber \\
 & =\sum_{i=0}^{\mu}c_{\mu,i}G_{0}\left(s,x\right)\left(b\left(x\right)\right)^{\frac{\ell s+\eta+d}{d}}\left(\log b\left(x\right)\right)^{i}\sum_{m,n}\frac{\xi_{m,n}\left(s,x\right)\left(\frac{a\left(x\right)}{b\left(x\right)}\right)^{\frac{m}{d}}}{\left(\ell s+\eta+d-m+n\right)^{\mu+1-i}}\nonumber \\
 & -\sum_{i=0}^{\mu}c_{\mu,i}G_{0}\left(s,x\right)\left(a\left(x\right)\right)^{\frac{\ell s+\eta+d}{d}}\left(\log a\left(x\right)\right)^{i}\sum_{m,n}\frac{\xi_{m,n}\left(s,x\right)\left(\frac{a\left(x\right)}{b\left(x\right)}\right)^{\frac{n}{d}}}{\left(\ell s+\eta+d-m+n\right)^{\mu+1-i}}\nonumber 
\end{align}

As a consequence of the Dominated Convergence Theorem, the fact that
$\forall x\in X,\ 1\leq a\left(x\right)<b\left(x\right)<+\infty$
and the results in Section \ref{subsec:Results-about-parametric},
the expressions
\begin{align*}
\sum_{m,n}\frac{\xi_{m,n}\left(s,x\right)\left(\frac{a\left(x\right)}{b\left(x\right)}\right)^{\frac{m}{d}}}{\left(\ell s+\eta+d-m+n\right)^{\mu+1-i}}, &  & \sum_{m,n}\frac{\xi_{m,n}\left(s,x\right)\left(\frac{a\left(x\right)}{b\left(x\right)}\right)^{\frac{n}{d}}}{\left(\ell s+\eta+d-m+n\right)^{\mu+1-i}}
\end{align*}
define functions in $\mathcal{A}\left(X\right)$ without poles outside
$P'$.

$\bullet$ If $\ell=0$ and $\eta\notin\mathbb{Z}$, then the above
equation holds for all $\left(s,x\right)\in\left(\mathbb{C}\setminus P\right)\times X$,
since the denominator does not vanish.

$\bullet$ If $\ell=0$ and $\eta\in\mathbb{Z}$, then we split $\Phi$
into the sum of two (still strongly convergent) series, by isolating
the indices which contribute, in $T$, to the power $y^{-1}$:
\begin{align*}
\Phi & \left(s,x,y\right)=\Phi_{=}\left(s,x,y\right)+\Phi_{\not=}\left(s,x,y\right)\\
 & =\sum_{\overset{m,n:}{m=\eta+d+n}}\xi_{m,n}\left(s,x\right)\left(\frac{a\left(x\right)}{y}\right)^{\frac{m}{d}}\left(\frac{y}{b\left(x\right)}\right)^{\frac{n}{d}}+\sum_{\overset{m,n:}{m\not=\eta+d+n}}\xi_{m,n}\left(s,x\right)\left(\frac{a\left(x\right)}{y}\right)^{\frac{m}{d}}\left(\frac{y}{b\left(x\right)}\right)^{\frac{n}{d}}\\
 & =y^{-\frac{\eta+d}{d}}\left(a\left(x\right)\right)^{\frac{\eta+d}{d}}\sum_{n}\xi_{n+\eta+d,n}\left(s,x\right)\left(\frac{a\left(x\right)}{b\left(x\right)}\right)^{\frac{n}{d}}+\sum_{\overset{m,n:}{m\not=\eta+d+n}}\xi_{m,n}\left(s,x\right)\left(\frac{a\left(x\right)}{y}\right)^{\frac{m}{d}}\left(\frac{y}{b\left(x\right)}\right)^{\frac{n}{d}}.
\end{align*}

The integral of $T_{\not=}\left(s,x,y\right):=G_{0}\left(s,x\right)y^{\frac{\eta}{d}}\left(\log y\right)^{\mu}\Phi_{\not=}\left(s,x,y\right)$
is computed as in the previous cases, and the denominators never vanish.

As for $T_{=}\left(s,x,y\right):=G_{0}\left(s,x\right)y^{\frac{\eta}{d}}\left(\log y\right)^{\mu}\Phi_{=}\left(s,x,y\right)$,
for $\left(s,x\right)\in\left(\mathbb{C}\setminus P\right)\times X$,
we have
\begin{align*}
\int_{a\left(x\right)}^{b\left(x\right)}T_{=}\left(s,x,y\right)\text{d}y & =G_{0}\left(s,x\right)\left(a\left(x\right)\right)^{\frac{\eta+d}{d}}\sum_{n}\xi_{n+\eta+d,n}\left(s,x\right)\left(\frac{a\left(x\right)}{b\left(x\right)}\right)^{\frac{n}{d}}\frac{\left(\log b\left(x\right)\right)^{\mu+1}}{\mu+1}\\
 & -G_{0}\left(s,x\right)\left(a\left(x\right)\right)^{\frac{\eta+d}{d}}\sum_{n}\xi_{n+\eta+d,n}\left(s,x\right)\left(\frac{a\left(x\right)}{b\left(x\right)}\right)^{\frac{n}{d}}\frac{\left(\log a\left(x\right)\right)^{\mu+1}}{\mu+1}.
\end{align*}

Hence we have shown that there is $H\in\mathcal{D}\left(X\right)$
without poles outside some closed discrete set $P'\supseteq P$, such
that
\[
\forall\left(s,x\right)\in\left(\mathbb{C}\setminus P'\right)\times X,\ H\left(s,x\right)=\int_{a\left(x\right)}^{b\left(x\right)}T\left(s,x,y\right)\text{d}y.
\]

\begin{rem}
\label{rem: holom continuation}If $\ell=0$ then $H$ has no new
singularities. If $\ell\not=0$, let $\sigma\in P'\setminus P$. Since
for all $\left(x,y\right)\in B,\text{ the function}\ s\longmapsto T\left(s,x,y\right)$
is holomorphic and bounded in a neighbourhood of $\sigma$, by differentiation
under the integral sign, the integral $\int_{a\left(x\right)}^{b\left(x\right)}T\left(s,x,y\right)\text{d}y$
is also holomorphic in a neighbourhood of $\sigma$. Since such an
integral coincides with $H$ on a deleted neighbourhood of $\sigma$
and $s\longmapsto H\left(s,x\right)$ is meromorphic, $\sigma$ is
not a pole of $H$ but a removable singularity. Hence,
\begin{align*}
H_{\sigma}\left(x\right) & :=\lim_{s\longrightarrow\sigma}H\left(s,x\right)=\lim_{s\longrightarrow\sigma}\int_{a\left(x\right)}^{b\left(x\right)}T\left(s,x,y\right)\text{d}y\\
 & =\int_{a\left(x\right)}^{b\left(x\right)}\lim_{s\longrightarrow\sigma}T\left(s,x,y\right)\text{d}y=\int_{a\left(x\right)}^{b\left(x\right)}T\left(\sigma,x,y\right)\text{d}y.
\end{align*}
The rightmost integral can be computed in a similar way as we did
above for the case $\ell=0,\eta\in\mathbb{Z}$ (where now we split
the series according to the condition $m=\ell\sigma+\eta+d+n$) and
the computation clearly shows that $H_{\sigma}\in\mathcal{C}^{\mathbb{K}}\left(X\right)$.

Finally, notice that every $\sigma\in P'\setminus P$ has the form
$\sigma=\frac{\nu_{0}-\eta-d}{\ell}$ for some $\nu_{0}\in\mathbb{Z}$,
so that if $\ell$ and/or $\eta$ are in $\mathbb{K}$, then so is
$\sigma$. 
\end{rem}
Hence, we have proven the following statement.
\begin{prop}
\label{prop: integration of C^M bdd generators}Let $B$ be as in
\eqref{eq:A_theta=00003DB} with $b<+\infty$, $\mathbb{K}\subseteq\mathbb{C}$
be a subfield and let $\mathcal{D}$ be either $\mathcal{C}^{\mathbb{K},\mathcal{M}}$
or $\mathcal{C}^{\mathcal{P}\left(\mathbb{K}\right),\mathcal{M}}$.
Let $T\in\mathcal{D}\left(B\right)$ be a prepared generator with
no poles outside $P$ (for some discrete closed set $P\subseteq\mathbb{K}$),
as in Definition \ref{def: prep gen of C^M}. Let
\[
P'=P\cup\left\{ s\in\mathbb{C}:\ \ell s+\eta\in\mathbb{\mathbb{Z}}\right\} \subseteq\mathbb{K}.
\]
Then
\[
\mathrm{Int}\left(T;\left(\mathbb{C}\setminus P\right)\times X\right)=\left(\mathbb{C}\setminus P\right)\times X
\]
and there exist a function $H\in\mathcal{D}\left(X\right)$ without
poles outside $P'$ such that
\[
\forall\left(s,x\right)\in\left(\mathbb{C}\setminus P'\right)\times X,\ H\left(s,x\right)=\int_{a\left(x\right)}^{b\left(x\right)}T\left(s,x,y\right)\mathrm{d}y.
\]
Moreover, for all $\sigma\in P'\setminus P$ there is a function $H_{\sigma}\in\mathcal{C}^{\mathbb{\mathbb{K}}}\left(X\right)$
such that
\[
\forall x\in X,\ H_{\sigma}\left(x\right)=\int_{a\left(x\right)}^{b\left(x\right)}T\left(\sigma,x,y\right)\mathrm{d}y
\]
and $\forall x\in X$, the function $s\longmapsto H\left(s,x\right)$
can be holomorphically extended at $s=\sigma$ by setting $H\left(\sigma,x\right)=H_{\sigma}\left(x\right)$.
\end{prop}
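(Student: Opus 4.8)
The plan is to assemble the explicit case analysis carried out above in this subsection into the three assertions of the statement. First I would settle integrability: since $b < +\infty$, the closure of each fibre $B_x$ is the compact interval $[a(x), b(x)]$, and for every fixed $(s,x) \in (\mathbb{C}\setminus P)\times X$ the prepared generator $T(s,x,y) = G_0(s,x)\,y^{(\ell s + \eta)/d}(\log y)^\mu\Phi(s,x,y)$ extends continuously up to the endpoints $y = a(x)$ and $y = b(x)$, because $\Phi$ is given by a normally convergent series in the bounded quantities $(a(x)/y)^{1/d}$ and $(y/b(x))^{1/d}$. Hence the integral \eqref{eq: integral of gen} is finite for all such $(s,x)$, giving $\mathrm{Int}(T; (\mathbb{C}\setminus P)\times X) = (\mathbb{C}\setminus P)\times X$.

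For the explicit formula I would substitute the nested $\psi$-prepared form \eqref{eq:nested prep param strong} of $\Phi$ and integrate term by term, which is legitimate by the normal summability recorded in Remark \ref{rem: analyticity in x and summability}. Applying Lemma \ref{lem: integral of a naive in y} to each monomial $y^{(\ell s + \eta - m + n)/d}(\log y)^\mu$ produces denominators $\ell s + \eta + d - m + n$, which is what forces the enlarged pole set $P' = P \cup \{s : \ell s + \eta \in \mathbb{Z}\}$. The core is then the displayed case split: for $\ell \neq 0$ these denominators vanish only on the discrete set $P' \setminus P$, and off that set the two series appearing in \eqref{eq:integral of bounded gen} define elements of $\mathcal{A}(X)$; for $\ell = 0$ and $\eta \notin \mathbb{Z}$ no denominator vanishes; and for $\ell = 0$ and $\eta \in \mathbb{Z}$ one isolates the indices with $m = \eta + d + n$, which integrate to a logarithmic term $(\log y)^{\mu+1}/(\mu+1)$, treating the remaining indices as before. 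In every case the boundary evaluations $a(x)^{(\ell s + \eta + d)/d}$ and $b(x)^{(\ell s + \eta + d)/d}$ contribute parametric powers of subanalytic functions, the factor $G_0 \in \mathcal{D}(X)$ is carried along unchanged, and the series coefficients land in $\mathcal{A}(X)$, so the assembled $H$ is a finite sum of generators of $\mathcal{D}(X)$ with no poles outside $P'$.

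The step I expect to demand the most care is checking that, after dividing each coefficient $\xi_{m,n}(s,x)$ by $(\ell s + \eta + d - m + n)^{\mu+1-i}$, the resulting family is still strongly convergent in the sense of Definition \ref{def: strong convergence}: one must verify that the enlarged set of poles is closed and discrete and that the pole orders remain bounded, so that the series genuinely represents a function in $\mathcal{A}(X)$. This is precisely what the Dominated Convergence Theorem together with the properties gathered in Section \ref{subsec:Results-about-parametric} supply, using crucially that $1 \leq a(x) < b(x) < +\infty$, so that $a(x)/b(x) \in (0,1)$ keeps the relevant series normally summable.

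Finally, for the removable-singularity clause I would invoke Remark \ref{rem: holom continuation}: at each $\sigma \in P' \setminus P$ the map $s \mapsto T(s,x,y)$ is holomorphic and, $B$ being bounded, uniformly bounded in a neighbourhood of $\sigma$, so differentiation under the integral sign shows that $(s,x) \mapsto \int_{a(x)}^{b(x)} T(s,x,y)\,\mathrm{d}y$ is holomorphic at $\sigma$. As it agrees with the meromorphic function $H$ on a punctured neighbourhood, $\sigma$ is a removable singularity, and the value $H_\sigma(x) = \int_{a(x)}^{b(x)} T(\sigma,x,y)\,\mathrm{d}y$ is computed by the same splitting used in the case $\ell = 0,\ \eta \in \mathbb{Z}$, now relative to the condition $m = \ell\sigma + \eta + d + n$; this visibly yields a function in $\mathcal{C}^{\mathbb{K}}(X)$, and since $\ell, \eta \in \mathbb{K}$ the pole set $P'$ stays inside $\mathbb{K}$.
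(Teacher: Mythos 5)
Your proposal is correct and follows essentially the same route as the paper: the paper's own proof is exactly the computation preceding the statement in Section \ref{subsec:Bounded-cells} (continuity up to the boundary for integrability, term-by-term integration of the nested prepared form via Lemma \ref{lem: integral of a naive in y}, the three-way case split on $\ell$ and $\eta$ with the isolation of the indices $m=\eta+d+n$, and Remark \ref{rem: holom continuation} for the removable singularities at $P'\setminus P$). You also correctly single out the one step the paper treats somewhat tersely, namely that dividing the coefficients by $\left(\ell s+\eta+d-m+n\right)^{\mu+1-i}$ preserves strong convergence with a closed discrete pole set of bounded order.
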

\begin{rem}
\label{rem: int of sum of gen on bdd}The proposition also applies
to any finite sum of prepared generators on the bounded cell $B$,
with $P'$ a finite union of closed and discrete sets and $P'\setminus P$
contained in a finitely generated $\mathbb{Z}$-lattice.
\end{rem}

\subsection{Cells with unbounded $y$-fibres\label{subsec:Unbounded-cells}}

We now introduce a type of function in $\mathcal{D}\left(X\times\mathbb{R}\right)$
which has a particularly simple expression in the last variable $y$. 
\begin{defn}
\label{def: C^M naive in y}Let $A\subseteq X\times\mathbb{R}$ be
a subanalytic cell which is open over $X$ (see Definition \ref{def:cellopen over X}).
A function $h\in\mathcal{D}\left(A\right)$ without poles outside
some closed discrete set $P\subseteq\mathbb{K}$ is \emph{Puiseux
in $y$} if there are $\ell,\eta\in\mathbb{K},\ d\in\mathbb{N}\setminus\left\{ 0\right\} ,\ \mu\in\mathbb{N}$
and a collection $\left\{ g_{k}\left(s,x\right)\right\} _{k\in\mathbb{N}}\subseteq\mathcal{D}\left(X\right)$
such that for all $s\in\mathbb{C}\setminus P$, the series of functions
\[
\varphi\left(s,x,y\right):=\sum_{k}g_{k}\left(s,x\right)y^{-\frac{k}{d}}
\]
converges normally on $A$ and $\forall\left(x,y\right)\in A,\ \mathbb{C}\setminus P\ni s\longmapsto\varphi\left(s,x,y\right)$
is holomorphic, and
\begin{equation}
h\left(s,x,y\right)=\varphi\left(s,x,y\right)y^{\frac{\ell s+\eta}{d}}\left(\log y\right)^{\mu}=\sum_{k}g_{k}\left(s,x\right)y^{\frac{\ell s+\eta-k}{d}}\left(\log y\right)^{\mu}.\label{eq:C^M naive in y}
\end{equation}
We call the tuple $\left(\ell,\eta,d,\mu\right)$ the \emph{Puiseux
data} of $h$.
\end{defn}
\begin{rem}
\label{rem: C naive gen}Let $B$ be as in \eqref{eq:A_theta=00003DB}
and $T\in\mathcal{D}\left(B\right)$ be a prepared generator (for
some $y$-prepared $1$-bounded subanalytic map $\psi$ as in \eqref{eq: psi}).
If $B$ has unbounded $y$-fibres, then $T$ is Puiseux in $y$. 
\end{rem}
We now turn our attention to prepared generators of $\mathcal{D}\left(B\right)$,
where, in the definition \eqref{eq:A_theta=00003DB} of $B$, we have
$b\equiv+\infty$. More generally, in what follows we will suppose
that $T\in\mathcal{D}\left(B\right)$ is a finite sum of prepared
generators (where $\psi$ is as in \eqref{eq: psi unbounded}), sharing
the same Puiseux data and without poles outside some closed discrete
set $P\subseteq\mathbb{K}$. Hence, for some $\ell,\eta\in\mathbb{K},\ \mu\in\mathbb{N}$,
$T$ has the form 

\begin{equation}
\begin{aligned}T\left(s,x,y\right) & =\sum_{j\leq N}T_{j}\left(s,x,y\right)\\
 & =\sum_{j\leq N}G_{j}\left(s,x\right)y^{\frac{\ell s+\eta}{d}}\left(\log y\right)^{\mu}\sum_{k}\xi_{j,k}\left(s,x\right)\left(\frac{a\left(x\right)}{y}\right)^{\frac{k}{d}}\\
 & =y^{\frac{\ell s+\eta}{d}}\left(\log y\right)^{\mu}\sum_{k}h_{k}\left(s,x\right)\left(\frac{a\left(x\right)}{y}\right)^{\frac{k}{d}},
\end{aligned}
\label{eq:prep unbounded naive}
\end{equation}
where $h_{k}=\sum_{j\leq N}G_{j}\xi_{j,k}\in\mathcal{D}\left(X\right)$. 

First, we describe $\mathrm{Int}\left(T;\left(\mathbb{C}\setminus P\right)\times X\right)$.
Let $m_{k}\left(s,y\right)=y^{\frac{\ell s+\eta-k}{d}}\left(\log y\right)^{\mu}$
and notice that, since $a\left(x\right)\geq1$ and since for all $s\in\mathbb{C}$
the real parts of the exponent of $y$ in $m_{k}$ and $m_{k'}$ are
different if $k\not=k'$,
\[
\mathrm{Int}\left(T;\left(\mathbb{C}\setminus P\right)\times X\right)=\bigcap_{k\in\mathbb{N}}\mathrm{Int}\left(h_{k}m_{k};\left(\mathbb{C}\setminus P\right)\times X\right).
\]

$\bullet$ If $\ell\not=0$ then
\begin{align*}
\mathrm{Int}\left(h_{k}m_{k};\left(\mathbb{C}\setminus P\right)\times X\right) & =\left\{ s\in\mathbb{C}\setminus P:\ \Re\left(\ell s+\eta\right)+d-k<0\right\} \times X\\
 & \cup\left\{ \left(s,x\right)\in\left(\mathbb{C}\setminus P\right)\times X:\ \Re\left(\ell s+\eta\right)+d-k\geq0\wedge h_{k}\left(s,x\right)=0\right\} 
\end{align*}
and hence, if
\begin{equation}
S_{0}=\left\{ s\in\mathbb{C}:\ \Re\left(\ell s+\eta\right)+d<0\right\} \ \text{and}\ S_{i}=\left\{ s\in\mathbb{C}:\ i-1\leq\Re\left(\ell s+\eta\right)+d<i\right\} \ \left(i\geq1\right),\label{eq:S_i}
\end{equation}
then
\begin{equation}
\mathrm{Int}\left(T;\left(\mathbb{C}\setminus P\right)\times X\right)=\left(S_{0}\times X\right)\cup\bigcup_{i\geq1}\left\{ \left(s,x\right)\in\left(S_{i}\setminus P\right)\times X:\ \bigwedge_{k<i}h_{k}\left(s,x\right)=0\right\} .\label{eq: int locus unbdd elln0}
\end{equation}

$\bullet$ If $\ell=0$ then 
\[
\mathrm{Int}\left(h_{k}m_{k};\left(\mathbb{C}\setminus P\right)\times X\right)=\begin{cases}
\left(\mathbb{C}\setminus P\right)\times X & \text{if}\ \Re\left(\eta\right)+d-k<0\\
\left\{ \left(s,x\right)\in\left(\mathbb{C}\setminus P\right)\times X:\ h_{k}\left(s,x\right)=0\right\}  & \text{if }\Re\left(\eta\right)+d-k\geq0
\end{cases}
\]
and hence, if $k_{0}=\lfloor\Re\left(\eta\right)\rfloor+d$, then
\begin{equation}
\mathrm{Int}\left(T;\left(\mathbb{C}\setminus P\right)\times X\right)=\left\{ \left(s,x\right)\in\left(\mathbb{C}\setminus P\right)\times X:\ \bigwedge_{k\leq k_{0}}h_{k}\left(s,x\right)=0\right\} .\label{eq: int locus unbdd ell=00003D0}
\end{equation}
Let 
\begin{equation}
P'=\begin{cases}
P\cup\left\{ s\in\mathbb{C}:\ \Re\left(\ell s+\eta\right)+d\in\mathbb{N}\right\}  & \text{if }\ell\not=0\\
P & \text{if }\ell=0
\end{cases}\subseteq\mathbb{K}.\label{eq:new poles}
\end{equation}
Notice that $\left(P'\setminus P\right)\cap S_{0}=\emptyset$.

Our next aim is to show that there exists $H\in\mathcal{D}\left(X\right)$,
with no poles outside $P'$ such that $H$ coincides with the integral
of $T$ on its integration locus.

In the notation of Lemma \ref{lem: integral of a naive in y}, let
\begin{align*}
H_{k}\left(s,x\right) & =-\left(a\left(x\right)\right)^{\frac{\ell s+\eta+d}{d}}\sum_{i\leq\mu}c_{\mu,i}\left(\log a\left(x\right)\right)^{i}\frac{h_{k}\left(s,x\right)}{\left(\ell s+\eta+d-k\right)^{\mu+1-i}},
\end{align*}
and define
\[
H\left(s,x\right)=\begin{cases}
{\displaystyle \sum_{k\geq0}H_{k}}\left(s,x\right) & \text{if }\ell\not=0\\
\\
{\displaystyle \sum_{k>k_{0}}}H_{k}\left(s,x\right) & \text{if }\ell=0
\end{cases}.
\]
By the results in Section \ref{subsec:Results-about-parametric},
$H\in\mathcal{D}\left(X\right)$ and has no poles outside $P'$, and
by Lemma \ref{lem: integral of a naive in y},

\[
\forall\left(s,x\right)\in\text{Int}\left(T;\left(\mathbb{C}\setminus P'\right)\times X\right),\ \int_{a\left(x\right)}^{+\infty}T\left(s,x,y\right)\text{d}y=H\left(s,x\right).
\]
If $\ell=0$ then $H$ has no new singularities, whereas if $\ell\not=0$
then the new singularities are located in $\left(\mathbb{C}\setminus S_{0}\right)\times X$
and are in general not removable. 
\begin{rem}
\label{rem: int locus of unbdd gen}If $\mathcal{D}=\mathcal{C}^{\mathbb{K},\mathcal{M}}$
then the sets $S_{i}\ \left(i\geq1\right)$ in \eqref{eq:S_i} are
vertical strips in the complex plane of fixed width $\frac{1}{\ell}$.
The points $\sigma\in P'\setminus P$ lie on the boundaries of such
strips and their imaginary part is equal to ${\displaystyle \frac{\Im\left(\eta\right)}{\ell}}$.
If $\mathcal{D}=\mathcal{C}^{\mathcal{P}\left(\mathbb{K}\right),\mathcal{M}}$,
where $\mathbb{K}\not\subseteq\mathbb{R}$, then $\ell\in\mathbb{K}$
and the sets $S_{i}$ are parallel (not necessarily vertical) strips
of fixed width. The points $\sigma\in P'\setminus P$ again lie on
the boundaries of such strips and satisfy the equation $\Re\left(\ell\right)\Im\left(\sigma\right)+\Im\left(\ell\right)\Re\left(\sigma\right)+\Im\left(\eta\right)=0$.
In both cases, the set $P'\setminus P$ is contained in a finitely
generated $\mathbb{Z}$-lattice and hence $P'$ is closed and discrete.
\end{rem}
Hence, we have proven the following result.
\begin{prop}
\label{prop: integration of C^M unbdd generators}Let $B$ be as in
\eqref{eq:A_theta=00003DB} with $b=+\infty$, $\mathbb{K}\subseteq\mathbb{C}$
be a subfield and let $\mathcal{D}$ be either $\mathcal{C}^{\mathbb{K},\mathcal{M}}$
or $\mathcal{C}^{\mathcal{P}\left(\mathbb{K}\right),\mathcal{M}}$.
Let $T\in\mathcal{D}\left(B\right)$ be a finite sum of prepared generators
sharing the same Puiseux data, as in \eqref{eq:prep unbounded naive},
with no poles outside $P$ (for some discrete closed set $P\subseteq\mathbb{K}$).
Then $\mathrm{Int}\left(T;\left(\mathbb{C}\setminus P\right)\times X\right)$
is described as in \eqref{eq: int locus unbdd elln0} (if $\ell\not=0$)
or in \eqref{eq: int locus unbdd ell=00003D0} (if $\ell=0$) and,
for $P'$ as in \eqref{eq:new poles}, there exists a function $H\in\mathcal{D}\left(X\right)$
without poles outside $P'$ such that
\[
\forall\left(s,x\right)\in\mathrm{Int}\left(T;\left(\mathbb{C}\setminus P\right)\times X\right),\ H\left(s,x\right)=\int_{a\left(x\right)}^{+\infty}T\left(s,x,y\right)\mathrm{d}y.
\]
\end{prop}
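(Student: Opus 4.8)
The plan is to exploit the Puiseux form \eqref{eq:prep unbounded naive} of $T$: for fixed $(s,x)$ it is a normally convergent series of power-log monomials $h_k(s,x)\,a(x)^{k/d}\,y^{(\ell s+\eta-k)/d}(\log y)^\mu$ in $y$ whose exponents have pairwise distinct real parts. I would first reduce the integrability question to a termwise one, then integrate term by term via Lemma \ref{lem: integral of a naive in y}, and finally reassemble the result into an element of $\mathcal{D}(X)$ with controlled poles.

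For the integration locus, set $m_k(s,y)=y^{(\ell s+\eta-k)/d}(\log y)^\mu$, so that $|m_k(s,y)|=y^{(\Re(\ell s+\eta)-k)/d}(\log y)^\mu$ with real exponents strictly decreasing in $k$. Since the dominant (lowest-$k$) nonzero term governs the behaviour at $+\infty$, an elementary non-compensation principle applies --- monomials of distinct real order of growth cannot cancel --- so that $T(s,x,\cdot)\in L^1(a(x),+\infty)$ forces every term with non-integrable exponent to have vanishing coefficient, while the infinitely many terms with $\Re(\ell s+\eta)+d-k<0$ are integrable and their tail is controlled by the normal convergence of $\sum_k h_k(s,x)(a(x)/y)^{k/d}$. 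This yields $\mathrm{Int}(T;\cdot)=\bigcap_k\mathrm{Int}(h_k m_k;\cdot)$. Using $a(x)\ge1$, the term $h_k m_k$ is integrable iff $h_k(s,x)=0$ or $\Re(\ell s+\eta)+d-k<0$; intersecting over $k$ and separating $\ell\ne0$ from $\ell=0$ produces \eqref{eq: int locus unbdd elln0} and \eqref{eq: int locus unbdd ell=00003D0}.

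Next I would integrate \eqref{eq:prep unbounded naive} term by term (justified by normal, hence dominated, convergence). By Lemma \ref{lem: integral of a naive in y} each integrable term has antiderivative with $y$-exponent $(\ell s+\eta-k+d)/d$ of negative real part, so the contribution at $+\infty$ vanishes and only the evaluation at $a(x)$ survives; this produces $H_k(s,x)$ as displayed, carrying the denominator $(\ell s+\eta+d-k)^{\mu+1-i}$, and $H=\sum_k H_k$ (respectively $\sum_{k>k_0}H_k$ when $\ell=0$). Each $H_k$ already lies in $\mathcal{D}(X)$: the factor $a(x)^{(\ell s+\eta+d)/d}$ splits as a parametric power $a(x)^{(\ell/d)s}$ (in $\mathcal{P}(\mathcal{S}_+)$ when $\mathcal{D}=\mathcal{C}^{\mathbb{K},\mathcal{M}}$, since then $\ell\in\mathbb{Z}$, and in $\mathcal{P}(\mathcal{S}_+^{\mathbb{K}})$ when $\mathcal{D}=\mathcal{C}^{\mathcal{P}(\mathbb{K}),\mathcal{M}}$) times a $\mathbb{K}$-power $a(x)^{(\eta+d)/d}$ and logarithms from $\mathcal{C}^{\mathbb{K}}(X)$, while $h_k\in\mathcal{D}(X)$ and the denominator is an element of $\mathcal{E}_{\mathbb{K}}$ with a single pole at $s=(k-\eta-d)/\ell\in\mathbb{K}$.

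The hard part will be showing that $H=\sum_k H_k$ is a genuine element of $\mathcal{D}(X)$ --- not merely a formal series --- with poles confined to $P'$ of \eqref{eq:new poles}. Here I would invoke the normal summability of parametric strong functions from Section \ref{subsec:Results-about-parametric} (Remarks \ref{rem: analyticity in x and summability} and \ref{rem: nested present}): the coefficients $h_k$ inherit the strong-convergence structure, and division by $(\ell s+\eta+d-k)$, whose modulus grows with $k$, only improves summability while introducing poles of order at most $\mu+1$, located at $s=(k-\eta-d)/\ell$. When $\ell=0$ no denominator vanishes and no new poles appear; when $\ell\ne0$ these poles tend to $\infty$ and, by Remark \ref{rem: int locus of unbdd gen}, lie on the boundaries of the strips $S_i$ of \eqref{eq:S_i} with fixed imaginary part, hence form a subset of a finitely generated $\mathbb{Z}$-lattice, so that $P'$ is closed and discrete. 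Verifying that these termwise integrals genuinely reassemble into a parametric strong function --- rather than a divergent or densely-poled series --- is precisely where the structural results on strong convergence carry the weight of the argument.
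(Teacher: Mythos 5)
Your proposal follows essentially the same route as the paper: reduce integrability to the termwise condition $\mathrm{Int}(T;\cdot)=\bigcap_k\mathrm{Int}(h_k m_k;\cdot)$ using the pairwise distinct real parts of the $y$-exponents, integrate term by term via Lemma \ref{lem: integral of a naive in y} so that only the boundary term at $a(x)$ survives, and reassemble $H=\sum_k H_k$ into an element of $\mathcal{D}(X)$ with poles in $P'$ by appealing to the strong-convergence/normal-summability results of Section \ref{subsec:Results-about-parametric}. The details you supply (the splitting of $a(x)^{(\ell s+\eta+d)/d}$ into a parametric power times a $\mathbb{K}$-power, the location and bounded order of the new poles, and the lattice structure of $P'\setminus P$) are exactly the ones the paper relies on, so the proposal is correct and matches the paper's argument.
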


\section{Stability under integration of (parametric) power-constructible functions\label{sec:Stability-of (parametric) powers}}

This section is devoted to the proof of the results of stability under
parametric integration in Section \ref{sec:Main results}.

For the rest of this section we let $\mathcal{D}$ be either $\mathcal{C}^{\mathbb{K},\mathcal{M}}$
or $\mathcal{C}^{\mathcal{P}\left(\mathbb{K}\right),\mathcal{M}}$.

We will first prove stability under integration when we integrate
with respect to a single variable $y$. In this case, we can also
give a description of the integration locus. The strategy is the following:
we prepare the function we want to integrate with respect to the variable
$y$. This produces a cell decomposition such that on each cell, in
the new coordinates the function is a sum of prepared generators.
If the cell has bounded $y$-fibres, then the function is integrable
everywhere in restriction to such a cell, and we have already shown
(see Remark \ref{rem: int of sum of gen on bdd}) that the integral
can be expressed as a function of $\mathcal{D}$. If the cell has
unbounded $y$-fibres and the prepared generators all share the same
Puiseux data, then we know how to conclude by the results of the previous
section. It remains to consider the case of a sum of generators who
have different Puiseux data. Such data induce a partition of $\mathbb{C}$
into areas (see \eqref{eq:S_i}) which are involved in the description
of the integrability locus. In order to deal with different Puiseux
data, we introduce the notion of non-accumulating grid.
\begin{defn}
\label{def: grid and partition}Given $N,d\in\mathbb{N}^{\times}$
and $\left\{ \left(\ell_{i},\eta_{i}\right):\ 0\leq i\leq N\right\} \subseteq\mathbb{K}^{2}$,
define
\begin{align*}
\Xi_{i,0,-} & =\emptyset,\\
\Xi_{i,0,\circ} & =\left\{ s\in\mathbb{C}:\ \Re\left(\ell_{i}s+\eta_{i}\right)+d<0\right\} \quad\left(i\leq N\right),\\
\Xi_{i,j,-} & =\left\{ s\in\mathbb{C}:\ \Re\left(\ell_{i}s+\eta_{i}\right)+d=j-1\right\} \quad\left(i\leq N,\ j\in\mathbb{N}^{\times}\right),\\
\Xi_{i,j,\circ} & =\left\{ s\in\mathbb{C}:\ j-1<\Re\left(\ell_{i}s+\eta_{i}\right)+d<j\right\} \quad\left(i\leq N,\ j\in\mathbb{N}^{\times}\right).
\end{align*}
A collection of sets (partitioning $\mathbb{C}$) of the form
\[
\mathcal{G}=\left\{ \Xi_{i,j,\star}:\ i\leq N,\ j\in\mathbb{N},\ \star\in\left\{ -,\circ\right\} \right\} 
\]
is called a\emph{ non-accumulating grid} of \emph{data} $\left\{ N,d,\left(\ell_{0},\eta_{0}\right),\ldots,\left(\ell_{N},\eta_{N}\right)\right\} $.
Note that if $\ell_{i}=0$ then $\forall j\in\mathbb{N},\forall\star\in\left\{ -,\circ\right\} ,\ \Xi_{i,j,\star}$
is either empty or the whole $\mathbb{C}$.

A \emph{$\mathcal{G}$-cell} is a nonempty subset $\Sigma\subseteq\mathbb{C}$
such that
\[
\forall\Xi\in\mathcal{G},\ \Xi\cap\Sigma=\emptyset\text{\ or }\Sigma\subseteq\Xi\text{, and }\Sigma=\bigcap\left\{ \Xi\in\mathcal{G}:\ \Sigma\subseteq\Xi\right\} .
\]
We let $\mathcal{R}\left(\mathcal{G}\right)$ be the collection of
all $\mathcal{G}$-cells. The $\mathcal{G}$-cells are convex and
form a partition of $\mathbb{C}$. Each $\mathcal{G}$-cell either
has empty interior (an isolated point, a segment or a line) or is
an open subset of $\mathbb{C}$ containing an open ball of radius
$\varepsilon$, for some $\varepsilon=\varepsilon\left(\mathcal{G}\right)>0$
depending only on $\mathcal{G}$ (hence the word \textquotedblleft non-accumulating\textquotedblright ).
Given a $\mathcal{G}$-cell $\Sigma$, there are functions $j_{\Sigma}:\left\{ 0,\ldots,N\right\} \longrightarrow\mathbb{N}$
and $\star_{\Sigma}:\left\{ 0,\ldots,N\right\} \longrightarrow\left\{ -,\circ\right\} $
such that $\Sigma=\bigcap_{i\leq N}\Xi_{i,j_{\Sigma}\left(i\right),\star_{\Sigma}\left(i\right)}$.

If all the $\ell_{i}$ are in $\mathbb{R}^{\times}$, the we say that
$\mathcal{G}$ is a \emph{vertical} non-accumulating grid. In this
case, the cells with empty interior are points or vertical lines,
and the open cells are vertical strips of width $\geq\varepsilon$,
for some $\varepsilon=\varepsilon\left(\mathcal{G}\right)>0$. 
\end{defn}
\begin{example}
\label{ex: grid for one gen}Let $N,d\in\mathbb{N}^{\times}$. For
$i\leq N$, let $T_{i}$ be a sum of prepared generators on an unbounded
cell, sharing the same Puiseux data $\left(\ell_{i},\eta_{i},d,\mu_{i}\right)$
(as in \eqref{eq:prep unbounded naive}, see Remark \ref{rem: C naive gen}),
without poles outside some closed discrete set $P\subseteq\mathbb{K}$.
Consider the non-accumulating grid of data $\left\{ N,d,\left(\ell_{0},\eta_{0}\right),\ldots,\left(\ell_{N},\eta_{N}\right)\right\} $
and let $\Sigma=\bigcap_{i\leq N}\Xi_{i,j_{\Sigma}\left(i\right),\star_{\Sigma}\left(i\right)}\in\mathcal{R}\left(\mathcal{G}\right)$
be a $\mathcal{G}$-cell. Then
\[
\text{Int}\left(T_{i};\left(\Sigma\setminus P\right)\times X\right)=\left\{ \left(s,x\right):\ s\in\Sigma\setminus P,\ \bigwedge_{k<j_{\Sigma}\left(i\right)}g_{i,k}\left(s,x\right)=0\right\} ,
\]
where $g_{i,k}\in\mathcal{D}\left(X\right)$ are the coefficients
in the series expansion \eqref{eq:C^M naive in y} of $T_{i}$. It
follows that, if we rename 
\[
\left\{ g_{k}^{\Sigma}:\ k\in J_{\Sigma}\right\} =\left\{ g_{i,k}:\ i\leq N,\ k<j_{\Sigma}\left(i\right)\right\} ,
\]
then
\[
\bigcap_{i\leq N}\text{Int}\left(T_{i};\left(\Sigma\setminus P\right)\times X\right)=\left\{ \left(s,x\right):\ s\in\Sigma\setminus P,\ \bigwedge_{k\in J_{\Sigma}}g_{k}^{\Sigma}\left(s,x\right)=0\right\} .
\]
\end{example}
\begin{thm}
\label{thm: interpolation and locus C^M}Let $\mathbb{K}\subseteq\mathbb{C}$
be a subfield and let $\mathcal{D}$ be either $\mathcal{C}^{\mathbb{K},\mathcal{M}}$
or $\mathcal{C}^{\mathcal{P}\left(\mathbb{K}\right),\mathcal{M}}$.
Let $P\subseteq\mathbb{K}$ be a closed discrete set and $h\in\mathcal{D}\left(X\times\mathbb{R}\right)$
be with no poles outside $P$. There exist a closed discrete set $P'\subseteq\mathbb{K}$,
containing $P$ and contained in a finitely generated $\mathbb{Z}$-lattice,
and a function $H\in\mathcal{D}\left(X\right)$ without poles outside
$P'$ such that
\[
\forall\left(s,x\right)\in\mathrm{Int}\left(h;\left(\mathbb{C}\setminus P'\right)\times X\right),\quad\int_{\mathbb{R}}h\left(s,x,y\right)\mathrm{d}y=H\left(s,x\right).
\]
Moreover, there exists a non-accumulating grid $\mathcal{G}$ as in
Definition \ref{def: grid and partition} such that
\begin{equation}
\mathrm{Int}\left(h;\left(\mathbb{C}\setminus P'\right)\times X\right)=\bigcup_{\Sigma\in\mathcal{P\left(\mathcal{G}\right)}}\left\{ \left(s,x\right):\ s\in\Sigma\setminus P',\ \bigwedge_{k\in J_{\Sigma}}g_{k}^{\Sigma}\left(s,x\right)=0\right\} ,\label{eq:int locus}
\end{equation}
for a suitable finite set $J_{\Sigma}$ and suitable $g_{k}^{\Sigma}\in\mathcal{D}\left(X\right)$
without poles outside $P$.
\end{thm}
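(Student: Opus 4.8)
The plan is to reduce, by preparation and a change of variables, to integrating prepared generators on cells open over $X$, and then to treat bounded and unbounded $y$-fibres separately, using the results of Section \ref{sec:Integration-of-gen of C^M} together with the non-accumulating grid of Definition \ref{def: grid and partition}. First I would apply Proposition \ref{prop: prep of C^M} to obtain a cell decomposition of $\mathbb{R}^{m+1}$ compatible with $X$ such that, on each cell $A$ open over $\mathbb{R}^m$ (of the form \eqref{eq:A}), $h\circ\Pi_A$ is a finite sum of prepared generators on $\left(\mathbb{C}\setminus P\right)\times B_A$. Only the cells open over $X$ contribute to $\int_{\mathbb{R}}h\,\mathrm{d}y$, and since the fibres $A_x$ are pairwise disjoint, $y\mapsto h(s,x,y)$ lies in $L^1(\mathbb{R})$ if and only if each of its restrictions to the $A_x$ does; hence $\mathrm{Int}(h;\cdot)=\bigcap_A\mathrm{Int}(h\chi_A;\cdot)$ and the integral is the finite sum over $A$ of the integrals over the $A_x$. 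Pulling each such integral back along the bijection $\Pi_A$ of \eqref{eq:Ptheta} replaces $\int_{A_x}h\,\mathrm{d}y$ by $\int_{a_A(x)}^{b_A(x)}(h\circ\Pi_A)\,u^{\tau_A-1}\,\mathrm{d}u$; the Jacobian $u^{\tau_A-1}$ (equal to $1$ or $u^{-2}$) is a Puiseux monomial, so multiplying it into the prepared generators only shifts the exponent $\eta$ by an integer multiple of $d$, and we are reduced to integrating a finite sum of prepared generators on each $B_A$.

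On a cell with bounded $y$-fibres, Proposition \ref{prop: integration of C^M bdd generators} and Remark \ref{rem: int of sum of gen on bdd} show that the sum is integrable on all of $\left(\mathbb{C}\setminus P\right)\times X$ and that its integral is a function of $\mathcal{D}(X)$ whose new poles lie in a finitely generated $\mathbb{Z}$-lattice; such cells therefore contribute to $H$ and to $P'$ but impose no condition on the integration locus. On a cell with unbounded $y$-fibres I would group the prepared generators according to their Puiseux data, arranging a common denominator $d$, so that the pull-back is a finite sum $\sum_{i\le N}T_i$ with each $T_i$ of the form \eqref{eq:prep unbounded naive}; Proposition \ref{prop: integration of C^M unbdd generators} then computes $\int_{a_A}^{+\infty}T_i\,\mathrm{d}u$ as a function $H_i\in\mathcal{D}(X)$ whose new poles, by Remark \ref{rem: int locus of unbdd gen}, lie in a finitely generated $\mathbb{Z}$-lattice. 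Collecting the data $\left(\ell_i,\eta_i\right)$ over all unbounded cells and all groups yields a single non-accumulating grid $\mathcal{G}$, and on each $\mathcal{G}$-cell $\Sigma$ Example \ref{ex: grid for one gen} describes $\mathrm{Int}\left(T_i;\left(\Sigma\setminus P\right)\times X\right)$ through the vanishing of the coefficients $g_{i,k}$ with $k<j_\Sigma(i)$.

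The hard part will be the passage from these individual loci to the locus of the full sum: a priori $\sum_i T_i$ could be integrable on a strictly larger set than $\bigcap_i\mathrm{Int}(T_i;\cdot)$, because offending monomials coming from different groups may cancel. This is exactly what the non-compensation Proposition \ref{prop: non compensation powers}\eqref{item:cud1} rules out (applied with all $p_j=0$): for fixed $s\in\Sigma$ I would collect the finitely many offending monomials $u^{\frac{\ell_i s+\eta_i-k}{d}}(\log u)^{\mu_i}$ (those whose real exponent is $\ge-1$), merge those sharing the same exponent and log-power, and peel them off in order of decreasing real part and, within that, decreasing log-power. A nonzero combined coefficient in the dominant layer produces, by Proposition \ref{prop: non compensation powers}\eqref{item:cud1}, a modulus bounded below on a set of infinite measure, which cannot be repaired by the strictly lower-order terms; hence integrability forces each combined coefficient to vanish, and the cascade gives precisely the conditions $\bigwedge_{k\in J_\Sigma}g_k^\Sigma=0$, where the $g_k^\Sigma$ are the finite $\mathbb{Z}$-linear recombinations of the $g_{i,k}$ (so in $\mathcal{D}(X)$, with no poles outside $P$). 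This establishes the description \eqref{eq:int locus}.

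Finally I would set $H$ to be the sum over all cells and groups of the functions produced by Propositions \ref{prop: integration of C^M bdd generators} and \ref{prop: integration of C^M unbdd generators}. On the integration locus all offending boundary terms at $+\infty$ vanish, so these formal antiderivatives genuinely compute the integral and $H=\int_{\mathbb{R}}h\,\mathrm{d}y$ there; clearly $H\in\mathcal{D}(X)$. Taking $P'$ to be the union of $P$ with the finitely many new pole sets gives a closed discrete set with $P'\setminus P$ contained in a finitely generated $\mathbb{Z}$-lattice (a finite sum of such lattices), which completes the proof.
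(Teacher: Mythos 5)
Your route is the paper's own: prepare $h$ via Proposition \ref{prop: prep of C^M}, absorb the Jacobian of $\Pi_A$ into the generators, dispose of the cells with bounded $y$-fibres by Proposition \ref{prop: integration of C^M bdd generators} and Remark \ref{rem: int of sum of gen on bdd}, group the generators on unbounded cells by Puiseux data, integrate each group by Proposition \ref{prop: integration of C^M unbdd generators}, and run a layer-peeling cascade based on Proposition \ref{prop: non compensation powers}\eqref{item:cud1} to show that integrability of the sum forces the offending coefficients to vanish. The cascade itself is sound: on the infinite-measure set where the dominant oscillatory sum is bounded below, the strictly lower layers are negligible, so a nonzero dominant coefficient kills integrability.

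There is, however, a genuine gap in your derivation of the locus description \eqref{eq:int locus}. Which monomials $y^{(\ell_i s+\eta_i-k)/d}(\log y)^{\mu_i}$ coming from different groups coincide \emph{depends on $s$}: when $\mu_i=\mu_j$ but $\ell_i\neq\ell_j$, a coincidence occurs exactly when $(\ell_i-\ell_j)s+(\eta_i-\eta_j)\in\mathbb{Z}$, a discrete set of $s$ that can meet the interior of a $\mathcal{G}$-cell $\Sigma$. At such an $s$ your ``merged'' coefficients are sums $g_{i,k}+g_{j,k'}$, whose vanishing is strictly weaker than the vanishing of the individual $g_{i,k}$; the integration locus of $\sum_i T_i$ can then be strictly larger than $\bigcap_i\mathrm{Int}(T_i;\cdot)$, and no single finite family $g_k^{\Sigma}$ describes the locus on all of $\Sigma\setminus P'$. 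Your $P'$ collects only the poles of the antiderivatives $H_i$ (the solutions of $\ell_i s+\eta_i+d\in\mathbb{N}$), which is a different set and does not exclude these collision points. The paper resolves this in two steps: it first merges, once and for all, the groups with $\ell_i=\ell_j$, $\mu_i=\mu_j$ and $\eta_i-\eta_j\in\mathbb{Z}$ (the persistent, $s$-independent coincidences, for which your recombined coefficients are indeed the right ones), and then adjoins to $P'$ the remaining \emph{collision set} $P''_A=\{s:\ \exists i\neq j,\ \mu_i=\mu_j,\ \ell_i\neq\ell_j,\ (\ell_i-\ell_j)s+(\eta_i-\eta_j)\in\mathbb{Z}\}$ of \eqref{eq:collision set}, which is again contained in a finitely generated $\mathbb{Z}$-lattice. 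Outside $P''_A$ the exponent/log-power pairs are pairwise distinct, the cascade yields exactly the individual conditions $g_{i,k}=0$, and \eqref{eq:int locus} holds with a fixed family $g_k^{\Sigma}$. With this one addition your argument goes through; the identity $\int_{\mathbb{R}}h\,\mathrm{d}y=H$ itself is not affected, since term-by-term integration remains valid even at the collision points.
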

\begin{proof}
Apply Proposition \ref{prop: prep of C^M} to $h$ to find a cell
decomposition of $\mathbb{R}^{m+1}$ such that on each cell $B_{A}$
as in \eqref{eq: Atheta}, $h\circ\Pi_{A}$ is a finite sum of prepared
generators (for some $y$-prepared $1$-bounded subanalytic map $\psi_{A}$).
We may suppose that $X$ itself is a cell and we concentrate on the
collection $\mathcal{X}$ of all the cells of the decomposition which
have $X$ as a base, and which are open over $\mathbb{R}^{m}$. Since
$\text{Int}\left(h;\left(\mathbb{C}\setminus P\right)\times X\right)=\bigcap_{A\in\mathcal{X}}\text{Int}\left(h\cdot\chi_{A};\left(\mathbb{C}\setminus P\right)\times X\right)$
and
\[
\forall\left(s,x\right)\in\text{Int}\left(h;\left(\mathbb{C}\setminus P\right)\times X\right),\ \int_{\mathbb{R}}h\left(s,x,y\right)\text{d}y=\sum_{A\in\mathcal{X}}\int_{\mathbb{R}}h\left(s,x,y\right)\cdot\chi_{A}\left(x,y\right)\text{d}y,
\]
it is enough to prove the theorem for the functions $h\cdot\chi_{A}$.

For $A\in\mathcal{X}$, we can write
\[
h\circ\Pi_{A}\left(s,x,y\right)=\sum_{i\leq M_{A}}\tilde{T}_{i}^{A}\left(s,x,y\right),
\]
where each $\tilde{T}_{i}^{A}\in\mathcal{D}\left(B_{A}\right)$ is
a prepared generator. Recall the notation in \eqref{eq:A} and note
that 
\[
\frac{\partial\Pi_{A}}{\partial y}\left(x,y\right)=\sigma_{A}\tau_{A}y^{\tau_{A}-1}.
\]
Define
\[
T_{i}^{A}\left(s,x,y\right):=\sigma_{A}\tau_{A}y^{\tau_{A}-1}\tilde{T}_{i}^{A}\left(s,x,y\right).
\]
Then, 
\[
\text{Int}\left(\tilde{T}_{i}^{A}\circ\Pi_{A}^{-1};\left(\mathbb{C}\setminus P\right)\times X\right)=\text{Int}\left(T_{i}^{A};\left(\mathbb{C}\setminus P\right)\times X\right)
\]
and $\forall\left(s,x\right)\in\mathrm{Int}\left(h\cdot\chi_{A};\left(\mathbb{C}\setminus P\right)\times X\right),$
\begin{align*}
\int_{\mathbb{R}}h\left(s,x,y\right)\cdot\chi_{A}\left(x,y\right)\text{d}y & =\int_{a_{A}\left(x\right)}^{b_{A}\left(x\right)}h\circ\Pi_{A}\left(s,x,y\right)\cdot\frac{\partial\Pi_{A}}{\partial y}\left(x,y\right)\text{d}y\\
 & =\int_{a_{A}\left(x\right)}^{b_{A}\left(x\right)}\sum_{i\leq M_{A}}T_{i}^{A}\left(s,x,y\right)\text{d}y.
\end{align*}

If $B_{A}$ has bounded $y$-fibres, then by Proposition \ref{prop: integration of C^M bdd generators}
and Remark \ref{rem: int of sum of gen on bdd},
\[
\text{Int}\left(T_{i}^{A};\left(\mathbb{C}\setminus P\right)\times X\right)=\left(\mathbb{C}\setminus P\right)\times X
\]
and there are a closed discrete set $P'_{A}\subseteq\mathbb{K}$ (containing
$P$ and contained in a finitely generated $\mathbb{Z}$-lattice)
and functions $H_{i}^{A}\in\mathcal{D}\left(X\right)$ without poles
outside $P'_{A}$, such that 
\[
\forall\left(s,x\right)\in\left(\mathbb{C}\setminus P_{A}'\right)\times X,\ \sum_{i\leq M_{A}}H_{i}^{A}\left(s,x\right)=\int_{\mathbb{R}}h\left(s,x,y\right)\cdot\chi_{A}\left(s,x\right)\text{d}y.
\]

If $B_{A}$ has unbounded $y$-fibres, then consider the prepared
generators $\tilde{T}_{i}^{A}$ (which are Puiseux in $y$, of Puiseux
data $\left(\ell_{i}',\eta_{i}',d,\mu_{i}'\right)$). Suppose that
there are $i\not=j\leq M_{A}$ such that $\ell_{i}'=\ell_{j}',\ \mu_{i}'=\mu_{j}'$
and $\eta_{i}'-\eta_{j}'=\nu\in\mathbb{N}$. Write
\begin{align*}
\tilde{T}_{j}^{A}\left(s,x,y\right) & =\sum_{k}\tilde{g}_{j,k}\left(s,x\right)y^{\frac{\ell_{j}'s+\eta_{j}'-k}{d}}\left(\log y\right)^{\mu_{j}'}\\
 & =\sum_{k}h_{j,k}\left(s,x\right)y^{\frac{\ell_{i}'s+\eta_{i}'-k}{d}}\left(\log y\right)^{\mu_{i}'},
\end{align*}
where
\[
h_{j,k}\left(s,x\right)=\begin{cases}
0 & \text{if }k<\nu\\
\tilde{g}_{j,k-\nu} & \text{if }k\geq\nu
\end{cases}.
\]
Now $\tilde{T}_{i}^{A}$ and $\tilde{T}_{j}^{A}$ share the same Puiseux
data (and so do $T_{i}^{A}$ ans $T_{j}^{A}$). Hence, by summing
together all generators which share the same Puiseux data, we may
write
\[
\sum_{i\leq M_{A}}\tilde{T}_{i}^{A}\left(s,x,y\right)=\sum_{i\leq N_{A}}\tilde{T}_{i}\left(s,x,y\right),
\]
where $N_{A}\in\mathbb{N}$ and, if $T_{i}=\sigma_{A}\tau_{A}y^{\tau_{A}-1}\tilde{T}_{i}$,
\begin{equation}
T_{i}\left(s,x,y\right)=\sum_{k}g_{i,k}\left(s,x\right)y^{\frac{\ell_{i}s+\eta_{i}-k}{d}}\left(\log y\right)^{\mu_{i}}\in\mathcal{D}\left(B_{A}\right)\label{eq: sum of gen with different puiseux data}
\end{equation}
is a finite sum of prepared generators on the unbounded cell $B_{A}$
sharing the same Puiseux data $\left(\ell_{i},\eta_{i},d,\mu_{i}\right)$.
Moreover, $\forall i\not=j\leq N_{A},\ \left(\ell_{i},\eta_{i},\mu_{i}\right)\not=\left(\ell_{j},\eta_{j},\mu_{j}\right)$
and if $\left(\ell_{i},\mu_{i}\right)=\left(\ell_{j},\mu_{j}\right)$
then $\eta_{i}-\eta_{j}\notin\mathbb{Z}$. Let
\[
P'_{A}=P\cup\left\{ s\in\mathbb{C}:\ \exists i\leq N_{A}\text{ s.t. }\ell_{i}\not=0\text{ and }\ell_{i}s+\eta_{i}+d\in\mathbb{N}\right\} .
\]
Apply Proposition \ref{prop: integration of C^M unbdd generators}
to each $T_{i}$ and find $H_{i}\in\mathcal{D}\left(X\right)$ without
poles outside $P'_{A}$ such that 
\[
\forall\left(s,x\right)\in\text{Int}\left(T_{i};\left(\mathbb{C}\setminus P'_{A}\right)\times X\right),\ H_{i}\left(s,x\right)=\int_{a_{A}\left(x\right)}^{+\infty}T_{i}\left(s,x,y\right)\text{d}y.
\]
Clearly, $\bigcap_{i\leq N_{A}}\mathrm{Int}\left(T_{i};\left(\mathbb{C}\setminus P\right)\times X\right)\subseteq\mathrm{Int}\left(h\cdot\chi_{A};\left(\mathbb{C}\setminus P\right)\times X\right)$
and
\[
\forall\left(s,x\right)\in\bigcap_{i}\mathrm{Int}\left(T_{i};\left(\mathbb{C}\setminus P_{A}'\right)\times X\right),\ \int_{\mathbb{R}}h\left(s,x,y\right)\cdot\chi_{A}\left(x,y\right)\text{d}y=H_{0}+\cdots+H_{N}.
\]
Recall that the description of the above integrability locus is given
in Example \ref{ex: grid for one gen}, with respect to the non-accumulating
grid $\mathcal{G}_{A}$ of data $\left\{ N_{A},d,\left(\ell_{0},\eta_{0}\right),\ldots,\left(\ell_{N_{A}},\eta_{N_{A}}\right)\right\} $.
We would hence be done if we could show that the integrability locus
of $h\cdot\chi_{A}$ coincided with the intersection of the integrability
loci of the $T_{i}$. This is the case, outside a closed discrete
set, as we now show.

Let
\begin{align}
P_{A}'' & =\left\{ s\in\mathbb{C}:\ \exists i\not=j\leq N_{A}\text{ s.t. }\mu_{i}=\mu_{j},\ \ell_{i}\not=\ell_{j}\text{ and }\left(\ell_{i}-\ell_{j}\right)s+\left(\eta_{i}-\eta_{j}\right)\in\mathbb{Z}\right\} \label{eq:collision set}
\end{align}
and notice that $P''_{A}\subseteq\mathbb{K}$ is contained in a finitely
generated $\mathbb{Z}$-lattice. Note that $\forall s\in\mathbb{C}\setminus P_{A}''$,
the tuples
\[
\left(\frac{\ell_{i}s+\eta_{i}-k}{d},\mu_{i}\right)\ \ \ \ 1\leq i\leq N_{A},\ k\in\mathbb{N}
\]
are pairwise distinct. 

We now show that $\mathrm{Int}\left(h\cdot\chi_{A};\left(\mathbb{C}\setminus P_{A}''\right)\times X\right)=\bigcap_{i}\mathrm{Int}\left(T_{i};\left(\mathbb{C}\setminus P_{A}''\right)\times X\right)$. 

Let $\Sigma=\bigcap_{i\leq N}\Xi_{i,j_{\Sigma}\left(i\right),\star_{\Sigma}\left(i\right)}$
be a $\mathcal{G}_{A}$-cell, in the notation of Example \ref{ex: grid for one gen},
and let $\left(s_{0},x_{0}\right)\in\mathrm{Int}\left(h\cdot\chi_{A};\left(\Sigma\setminus P''_{A}\right)\times X\right)$.
For all $\left(s,x,y\right)\in\left(\Sigma\setminus P''_{A}\right)\times B_{A}$,
write
\begin{align*}
\sum_{i=1}^{N_{A}}T_{i}\left(s,x,y\right) & =\left(\sum_{i=1}^{N_{A}}\sum_{k=0}^{j_{\Sigma}\left(i\right)-1}g_{i,k}\left(s,x\right)y^{\frac{\ell_{i}s+\eta_{i}-k}{d}}\left(\log y\right)^{\mu_{i}}\right)+\left(\sum_{i=1}^{N_{A}}\sum_{k\geq j_{\Sigma}\left(i\right)}g_{i,k}\left(s,x\right)y^{\frac{\ell_{i}s+\eta_{i}-k}{d}}\left(\log y\right)^{\mu_{i}}\right)\\
 & =h_{A,1}^{\Sigma}\left(s,x,y\right)+h_{A,2}^{\Sigma}\left(s,x,y\right)
\end{align*}
and notice that $\text{Int}\left(h_{A,2}^{\Sigma};\Sigma\times X\right)=\Sigma\times X$,
so $\left(s_{0},x_{0}\right)\in\text{Int}\left(h_{A,1}^{\Sigma};\left(\Sigma\setminus P''_{A}\right)\times X\right)$.
Rename the (finitely many) terms appearing in the double sum defining
$h_{A,1}^{\Sigma}$ as 
\[
\left\{ g_{j}^{\Sigma}\left(s,x\right)y^{\alpha_{j}s+\beta_{j}}\left(\log y\right)^{\nu_{j}}\right\} _{j\in J_{\Sigma}}
\]
and let
\[
a_{j}=\Re\left(\alpha_{j}s_{0}+\beta_{j}\right),\ b_{j}=\Im\left(\alpha_{j}s_{0}+\beta_{j}\right).
\]
Recall that $\left(a_{j},b_{j}\right)\not=\left(a_{j'},b_{j'}\right)$
whenever $\nu_{j}=\nu_{j'}$, since $s_{0}\notin P_{A}''$. Let $\left(a_{0},\nu_{0}\right)$
be the lexicographic maximum of the set $\left\{ \left(a_{j},\nu_{j}\right):\ j\in J_{\Sigma}\right\} $
and let $J_{0}=\left\{ j\in J_{\Sigma}:\ \left(a_{j},\nu_{j}\right)=\left(a_{0},\nu_{0}\right)\right\} $.
Write
\[
h_{A,1}^{\Sigma}\left(s_{0},x_{0},y\right)=y^{a_{0}}\left(\log y\right)^{\nu_{0}}\sum_{j\in J_{0}}g_{j}^{\Sigma}\left(s_{0},x_{0}\right)y^{\mathrm{i}b_{j}}+\sum_{j\in J_{\Sigma}\setminus J_{0}}g_{j}^{\Sigma}\left(s_{0},x_{0}\right)y^{a_{j}+\mathrm{i}b_{j}}\left(\log y\right)^{\nu_{j}}.
\]
Since $\left(s_{0},x_{0}\right)\in\text{Int}\left(h_{A,1}^{\Sigma};\left(\Sigma\setminus P''_{A}\right)\times X\right)$,
it follows from Proposition \ref{prop: non compensation powers} (in
the case where all the polynomials $p_{j}$ are identically zero)
that $\bigwedge_{j\in J_{0}}g_{j}^{\Sigma}\left(s_{0},x_{0}\right)=0.$
By repeating this procedure with the index set $J_{\Sigma}\setminus J_{0}$,
we end up obtaining that 
\[
\bigwedge_{j\in J_{\Sigma}}g_{j}^{\Sigma}\left(s_{0},x_{0}\right)=0,
\]
i.e. $\left(s_{0},x_{0}\right)\in\bigcap_{i\leq N_{A}}\mathrm{Int}\left(T_{i};\left(\Sigma\setminus P''_{A}\right)\times X\right)$. 

Summing up, if we define $P''=\bigcup\left\{ P''_{A}:\ B_{A}\ \text{unbounded}\right\} $,
$\mathcal{G}:=\bigcup\left\{ \mathcal{G}_{A}:\ B_{A}\ \text{unbounded}\right\} $
and $P':=\bigcup_{A\in\mathcal{X}}P'_{A}\cup P''$, then the proof
of the theorem is complete.
\end{proof}
\begin{rem}
\label{rem: addenda to stability C^M}In the previous proof, if $\sigma\in P_{A}''$,
then we rewrite the functions $T_{i}\left(\sigma,x,y\right)$ by regrouping
the terms with the same exponents. We obtain thus new functions $T_{i,\sigma}\in\mathcal{C}^{\mathbb{C}}\left(X\times\mathbb{R}\right)$
(seen as functions in $\mathcal{C}^{\mathcal{M}}\left(X\times\mathbb{R}\right)$
which happen not to depend on $s$) to which Proposition \ref{prop: integration of C^M unbdd generators}
applies and such that, if $h_{\sigma}\left(x,y\right)=h\left(\sigma,x,y\right)\cdot\chi_{A}\left(x,y\right)$,
then 
\[
\text{Int}\left(h_{\sigma};X\right)=\bigcap_{i}\text{Int}\left(T_{i,\sigma};X\right).
\]
Moreover, if $\sigma\in P_{A}''\setminus P'_{A}$ then $\sigma$ is
not a singularity of either of the $H_{i}$ and, since the computation
of the integral is done integrating term-by-term, it is still the
case that
\[
\int_{\mathbb{R}}h\left(\sigma,x,y\right)\cdot\chi_{A}\left(x,y\right)\text{d}y=H_{0}\left(\sigma,x\right)+\cdots+H_{N}\left(\sigma,x\right).
\]
\end{rem}
\begin{rem}
\label{rem: non vertical grid}The non-accumulating grid $\mathcal{G}$
in Theorem \ref{thm: interpolation and locus C^M} is vertical in
all but the case $\mathcal{D}=\mathcal{C}^{\mathcal{P}\left(\mathbb{K}\right),\mathcal{M}}$,
with $\mathbb{K}\not\subseteq\mathbb{R}$. This implies in particular
that the system $\mathcal{C}^{\mathcal{P}\left(\mathbb{C}\right),\mathcal{M}}$
is strictly larger that the system $\mathcal{C}^{\mathcal{M}}$: for
example, if $h\in\mathcal{C}^{\mathcal{P}\left(\mathbb{C}\right),\mathcal{M}}\left(X\times\mathbb{R}\right)$
is a finite sum of generators which are Puiseux in $y$ on some cell
$A$ with unbounded $y$-fibres (see Definition \ref{def: C^M naive in y}),
where the real and imaginary parts of the exponents $\ell$ appearing
in the Puiseux data are all nonzero, then the integration locus of
$h$ in \eqref{eq:int locus} is based on a non-accumulating grid
which is not vertical. Hence $h$ cannot be an element of $\mathcal{C}^{\mathcal{M}}$.
\end{rem}
We now conclude the proof of Theorem \ref{thm:variants}%
, using Fubini's Theorem.
\begin{proof}
We argue by induction on $n\in\mathbb{N}^{\times}$. If $n=1$ then
it is Theorem \ref{thm: interpolation and locus C^M}. We prove the
case $n+1$: let $y$ be an $n$-tuple of variables and let $z$ be
a single variable, and consider $h\in\mathcal{D}\left(X\times\mathbb{R}^{n+1}\right)$
without poles outside some closed discrete set $P$. By Fubini's Theorem,
for all $\left(s,x\right)\in\text{Int}\left(h;\left(\mathbb{C}\setminus P\right)\times X\right)$,
the set 
\[
E_{\left(s,x\right)}:=\left\{ y\in\mathbb{R}^{n}:\ \left(s,x,y\right)\in\text{Int}\left(h;\left(\mathbb{C}\setminus P\right)\times X\times\mathbb{R}^{n}\right)\right\} 
\]
is such that $\mathbb{R}^{n}\setminus E_{\left(s,x\right)}$ has measure
zero and
\[
\iint_{\mathbb{R}^{n+1}}h\left(s,x,y,z\right)\text{d}y\wedge\text{d}z=\int_{E_{\left(s,x\right)}}\left[\int_{\mathbb{R}}h\left(s,x,y,z\right)\text{d}z\right]\text{d}y.
\]
By Theorem \ref{thm: interpolation and locus C^M}, applied to $h$
as an element of $\mathcal{D}\left(\left(X\times\mathbb{R}^{n}\right)\times\mathbb{R}\right)$,
there exist a set $P_{1}\subseteq\mathbb{K}$ (containing $P$ and
contained in a finitely generated $\mathbb{Z}$-lattice) and a function
$H_{1}\in\mathcal{D}\left(X\times\mathbb{R}^{n}\right)$ without poles
outside $P_{1}$ such that
\[
\forall\left(s,x,y\right)\in\text{Int}\left(h;\left(\mathbb{C}\setminus P_{1}\right)\times X\times\mathbb{R}^{n}\right),\ H_{1}\left(s,x,y\right)=\int_{\mathbb{R}}h\left(s,x,y,z\right)\text{d}z.
\]
We now apply the inductive hypothesis to $H_{1}$ and find that there
exist $P'\subseteq\mathbb{K}$ (containing $P_{1}$ and contained
in a finitely generated $\mathbb{Z}$-lattice) and a function $H\in\mathcal{D}\left(X\right)$
without poles outside $P'$ such that 
\[
\forall\left(s,x\right)\in\text{Int}\left(H_{1};\left(\mathbb{C}\setminus P'\right)\times X\right),\ H\left(s,x\right)=\int_{\mathbb{R}^{n}}H_{1}\left(s,x,y\right)\text{d}y.
\]
Let $\left(s,x\right)\in\text{Int}\left(h;\left(\mathbb{C}\setminus P'\right)\times X\right)$.
Since $H_{1}$ is defined on the whole $\left(\mathbb{C}\setminus P'\right)\times X\times\mathbb{R}^{n}$
and $\mathbb{R}^{n}\setminus E_{\left(s,x\right)}$ has measure zero,
\[
\iint_{\mathbb{R}^{n+1}}h\left(s,x,y,z\right)\text{d}y\wedge\text{d}z=\int_{\mathbb{R}^{n}}H_{1}\left(s,x,y\right)\text{d}y.
\]
In particular, $\left(s,x\right)\in\text{Int}\left(H_{1};\left(\mathbb{C}\setminus P'\right)\times X\right)$
and 
\[
\iint_{\mathbb{R}^{n+1}}h\left(s,x,y,z\right)\text{d}y\wedge\text{d}z=H\left(s,x\right).
\]
\end{proof}
\begin{rem}
\label{rem: proof of 2.5}The proof of Theorem \ref{thm Stability of C^K}
is obtained as a special case of that of Theorem \ref{thm:variants},
where all the functions involved happen not to depend on the variable
$s$.
\end{rem}
\medskip{}

We conclude this section with some further remarks about the classes
$\mathcal{C}^{\mathbb{K}},\mathcal{C}^{\mathbb{K},\mathcal{M}},\mathcal{C}^{\mathcal{P}\left(\mathbb{K}\right),\mathcal{M}}$
considered here. Again, we let $\mathcal{D}$ be either $\mathcal{C}^{\mathbb{K},\mathcal{M}}$
or $\mathcal{C}^{\mathcal{P}\left(\mathbb{K}\right),\mathcal{M}}$.
\begin{rems}
\label{rems: closure properties}$\ $
\begin{enumerate}
\item Let $\Sigma\subseteq\mathbb{C}$ be open and define $\mathcal{D}_{\Sigma}\left(X\right):=\left\{ h\restriction\Sigma\times X:\ h\in\mathcal{D}\left(X\right)\right\} $.
Clearly, Theorem \ref{thm:variants} also holds for $\mathcal{D}_{\Sigma}$.
\item $\mathcal{D}$ is stable under right-composition with meromorphic
functions, in the following sense. Let $\xi\in\mathcal{E}_{\mathbb{K}}$
and $\Sigma,\Sigma'\subseteq\mathbb{C}$ open such that $\xi\left(\Sigma\right)=\Sigma'$.
If $h\in\mathcal{D}_{\Sigma'}\left(X\right)$ then $\left(s,x\right)\longmapsto h\left(\xi\left(s\right),x\right)\in\mathcal{D}_{\Sigma}\left(X\right)$.
\item $\mathcal{D}$ and $\mathcal{C}^{\mathbb{K}}$ are stable under right-composition
with subanalytic maps, in the following sense. Let $X\subseteq\mathbb{R}^{m},Y\subseteq\mathbb{R}^{n}$
be subanalytic and $\varphi:X\longrightarrow Y$ be a map with components
in $\mathcal{S}\left(X\right)$. If $h\in\mathcal{D}\left(Y\right)$
and $g\in\mathcal{C}^{\mathbb{K}}\left(Y\right)$ then $\left(s,x\right)\longmapsto h\left(s,\varphi\left(x\right)\right)\in\mathcal{D}\left(X\right)$
and $g\circ\varphi\in\mathcal{C}^{\mathbb{K}}\left(X\right)$.
\end{enumerate}
Finally, for $h\in\mathcal{D}\left(X\times\mathbb{R}\right)$ without
poles outside some closed discrete set $P\subseteq\mathbb{K}$, we
describe (uniformly in the parameters $\left(s,x\right)$) the behaviour
of $h$ when $y\longrightarrow+\infty$. For this, we apply Proposition
\ref{prop: prep of C^M} to prepare $h$ and we concentrate on the
unique cell $A$ (with base $X$) which has vertical unbounded fibres.
By Remark \ref{rem: cell at infty}, $\Pi_{A}$ is the identity and
$A=B_{A}=\left\{ \left(x,y\right):\ x\in X,\ y>a\left(x\right)\right\} $. 
\end{rems}
Arguing as in the proof of Theorem \ref{thm: interpolation and locus C^M}
(the case of a cell with unbounded $y$-fibres) we can write, $\forall\left(s,x,y\right)\in\left(\mathbb{C}\setminus P\right)\times A$,
\[
h\left(s,x,y\right)=\sum_{i\leq N}T_{i}\left(s,x,y\right),
\]
where each $T_{i}$ is Puiseux in $y$, as in \eqref{eq: sum of gen with different puiseux data}.
Moreover, by enlarging $P$ to contain the \textquotedblleft collision
set\textquotedblright{} defined in \eqref{eq:collision set}, we may
suppose that $\forall s\in\mathbb{C}\setminus P$, the tuples
\begin{equation}
\left(\frac{\ell_{i}s+\eta_{i}-k}{d},\mu_{i}\right)\ \ \ \ i\leq N,\ k\in\mathbb{N}\label{eq:exponents}
\end{equation}
are pairwise distinct. Recall that $\ell_{i},\eta_{i}\in\mathbb{K}$
and $d,\mu_{i}\in\mathbb{N}$.

Fix an enumeration $\mathbb{N}\ni j\longmapsto\left(i\left(j\right),k\left(j\right)\right)\in\left\{ 0,\ldots,N\right\} \times\mathbb{N}$,
so that we may rewrite \eqref{eq:exponents} as
\[
\left(\lambda_{j}\left(s\right),\nu_{j}\right)=\left(\frac{\ell_{i\left(j\right)}s+\eta_{i\left(j\right)}-k\left(j\right)}{d},\mu_{i\left(j\right)}\right).
\]
Define
\[
a_{j}\left(s\right)=\Re\left(\lambda_{j}\left(s\right)\right)=\frac{\Re\left(\ell_{i\left(j\right)}s+\eta_{i\left(j\right)}\right)-k\left(j\right)}{d},\ b_{j}\left(s\right)=\Im\left(\lambda_{j}\left(s\right)\right)=\frac{\Im\left(\ell_{i\left(j\right)}s+\eta_{i\left(j\right)}\right)}{d}.
\]
Notice that $b_{j}\left(s\right)$ takes at most $N+1$ different
values, for every fixed $s$. Hence, we may write $h$ as the sum
of a uniformly summable family of functions as follows:
\begin{equation}
h\left(s,x,y\right)=\sum_{j}h_{j}\left(s,x\right)y^{a_{j}\left(s\right)+\text{i}b_{j}\left(s\right)}\left(\log y\right)^{\nu_{j}},\label{eq: expansion}
\end{equation}
where $h_{j}\in\mathcal{D}\left(X\right)$.

In a forthcoming paper, we will use \eqref{eq: expansion} to show
that $\mathcal{C}^{\mathbb{K}}$ is stable under taking pointwise
limits and that neither of the classes $\mathcal{C}^{\mathbb{K}},\mathcal{C}^{\mathbb{K},\mathcal{M}},\mathcal{C}^{\mathcal{P}\left(\mathbb{K}\right),\mathcal{M}}$
contains the Fourier transforms of all subanalytic functions.

\bibliographystyle{alpha}
\newcommand{\etalchar}[1]{$^{#1}$}
\def\cprime{$'$}

\end{document}